\newtheorem{theorem}{Theorem}[section]
\newtheorem{lemma}[theorem]{Lemma}
\newtheorem{proposition}[theorem]{Proposition}
\newtheorem{corollary}[theorem]{Corollary}
\newtheorem{def/prop}[theorem]{Definition/Proposition}
\theoremstyle{definition}
\newtheorem{remark}[theorem]{Remark}
\newtheorem*{ack}{Acknowledgements}
\newtheorem{example}{Example}[section]
\DeclareSymbolFont{bbold}{U}{bbold}{m}{n}
\DeclareSymbolFontAlphabet{\mathbbold}{bbold}
\def\BOne{\mathchoice{\scalebox{1.16}{$\displaystyle\mathbbold 1$}}{\scalebox{1.16}{$\textstyle\mathbbold 1$}}{\scalebox{1.16}{$\scriptstyle\mathbbold 1$}}{\scalebox{1.16}{$\scriptscriptstyle\mathbbold 1$}}}
\def\fract#1#2{\raise4pt\hbox{$ #1 \atop #2 $}}
\def\decdnar#1{\phantom{\hbox{$\scriptstyle{#1}$}}
\left\downarrow\vbox{\vskip15pt\hbox{$\scriptstyle{#1}$}}\right.}
\def\lcm{{\rm lcm}}
\def\bbc{{\mathbb C}}
\def\bbp{{\mathbb P}}
\def\bbq{{\mathbb Q}}
\def\bbr{{\mathbb R}}
\def\bbz{{\mathbb Z}}
\def\gra{\alpha}
\def\grb{\beta}
\def\grg{\gamma}
\def\gro{\omega}
\def\grt{\tau}
\def\grD{\Delta}
\def\grL{\Lambda}
\def\bfa{{\bf a}}
\def\bfl{{\bf l}}
\def\bfm{{\bf m}}
\def\bfn{{\bf n}}
\def\bfr{{\bf r}}
\def\bfv{{\bf v}}
\def\bfw{{\bf w}}
\def\cald{{\mathcal D}}
\def\calf{{\mathcal F}}
\def\cali{{\mathcal I}}
\def\calk{{\mathcal K}}
\def\call{{\mathcal L}}
\def\calo{{\mathcal O}}
\def\cals{{\mathcal S}}
\def\calz{{\mathcal Z}}
\def\gs{{\mathfrak s}}
\def\gt{{\mathfrak t}}
\def\gz{{\mathfrak z}}
\def\lcm{{\rm lcm}}
\def\lra{\longrightarrow}
\def\la#1{\hbox to #1pc{\leftarrowfill}}
\def\ra#1{\hbox to #1pc{\rightarrowfill}}
\def\<{\langle}
\def\>{\rangle}
\begin{document}

\title[CSC Sasaki Metrics and Admissible Construction]{Constant Scalar Curvature Sasaki Metrics and Projective Bundles}

\author[Charles Boyer]{Charles P. Boyer}
\address{Charles P. Boyer, Department of Mathematics and Statistics,
University of New Mexico, Albuquerque, NM 87131.}
\email{cboyer@unm.edu} 
\author[Christina T{\o}nnesen-Friedman]{Christina W. T{\o}nnesen-Friedman}
\address{Christina W. T{\o}nnesen-Friedman, Department of Mathematics, Union
College, Schenectady, New York 12308, USA }
\email{tonnesec@union.edu}
\thanks{The authors were partially supported by grants from the
Simons Foundation, CPB by (\#519432), and CWT-F by (\#422410)}
\keywords{Admissible construction, extremal Sasaki metrics, CSC Sasaki metrics}
\subjclass[2000]{Primary: 53C25; Secondary:  53C55}
\date{\today}

\begin{abstract}
In this paper we consider the Boothby-Wang construction over twist 1 stage 3 Bott orbifolds given in terms of the log pair $(S_\bfn,\grD_\bfm)$. We give explicit constant scalar curvature (CSC) Sasaki metrics either directly from CSC K\"ahler orbifold metrics or by using the weighted extremal approach of Apostolov and Calderbank. The Sasaki 7-manifolds (orbifolds) are finitely covered by compact simply connected manifolds (orbifolds) with the rational homology of the 2-fold connected sum of $S^2\times S^5$. 
\end{abstract}

\maketitle
\vspace{-7mm}

\tableofcontents

\section*{Introduction}\label{intro}
Arguably the foremost problem in Sasaki geometry is that of determining which isotopy classes of Sasakian structures admit Sasaki metrics of constant scalar curvature (CSCS). The Sasaki version of the Yau-Tian-Donaldson conjecture says roughly that CSCS metrics correspond to the affine variety satisfying some type of K-stability requirement. Collins and Sz\'ekelyhidi \cite{CoSz12}  proved that a CSC Sasaki metric implies the K-semistablility of corresponding affine K\"ahler variety. Recently, following the important work in the K\"ahler category by Chen and Cheng \cite{ChChI21,ChChII21}, He and Li \cite{He18,HeLi21} have related the existence of CSC Sasaki metrics to the reduced properness of the Mabuchi K-energy.

Complementary to these existence results, it is important to explore explicit examples. Of particular interest to us are those Sasaki manifolds (orbifolds) that admit a transverse admissible construction \cite{ACGT08}. A special case of such Sasaki orbifolds consist of principle $S^1$ orbibundles over Bott manifolds (orbifolds) \cite{GrKa94,BoCaTo17} to be described in the first Section. In this paper we discuss two fairly explicit methods for constructing constant scalar curvature Sasaki metrics on a class of 7-manifolds having the rational cohomology of the 2-fold connected sum of $S^2\times S^5$. 
Specifically, these $7$-manifolds, $M^7_{\bfn, \bfm}$, arise by a Boothby-Wang construction over a polarized admissible orbifold, $S_{\bfn,\bfm}$, given as a log pair $(S_\bfn, \Delta_\bfm)$, where
$S_\bfn=\bbp\bigl(\BOne\oplus \calo(n_1,n_2)\bigr)\lra \bbc\bbp^1\times\bbc\bbp^1$ and $\grD_\bfm=\Bigl(1-\frac{1}{m_0}\Bigr)e_0+\Bigl(1-\frac{1}{m_\infty}\Bigr)e_\infty$, with
$e_0$ and $e_\infty$ are the infinity sections of the bundle $S_\bfn$.
The first method is the standard Boothby-Wang construction on twist one stage 3 Bott orbifolds (KS orbifolds). The second method applies the weighted extremal approach of Apostolov and Calderbank \cite{ApCa18}. Both methods employ the admissible construction at the K\"ahler level (see \cite{ACGT08}) either directly or indirectly. Specifically, in Section \ref{adsect}, with the main work horses being Proposition \ref{CRtwistCSC} and Proposition \ref{orbtocsc} (the latter with limitations as mentioned in Remark \ref{4.14caution}), we prove Theorem \ref{CSCSasakiExistence} which together with Remark \ref{cscremark} give

\noindent {\bf Main Theorem}. {\it Let $M^7_{\bfn, \bfm}$ be a Boothby-Wang constructed manifold with maximal symmetry over a polarized KS orbifold. Then there always exists a positive constant scalar curvature Sasaki metric in the Sasaki cone.}

In terms of K-stability we have the following corollary of the Main Theorem and Corollary 1.1 of \cite{CoSz12}

\noindent {\bf Corollary}. {\it Let $M^7_{\bfn, \bfm}$ be a Boothby-Wang constructed Sasaki manifold over the KS orbifold $(S_\bfn,\Delta_\bfm)$. Then there exists a Reeb vector field $\xi$ in the Sasaki  cone such that the corresponding polarized affine variety $(Y,\xi)$
is K-semistable, equivalently the Sasakian structure $(\xi,\eta,\Phi,g)$ is K-semistable.}

In the Gorenstein case, a much stronger statement is known, namely the Sasaki version of the famous Chen-Donaldson-Sun \cite{ChDoSu15I,ChDoSu15II,ChDoSu15III} result by Collins and Sz\'ekelyhidi \cite{CoSz15}. In particular, the recent classification of all smooth Fano threefolds that admit K\"ahler-Einstein metrics \cite{ACCFKMSSV} gives a classification of all regular Sasaki-Einstein manifolds over smooth KS threefolds.

It is interesting to compare the Sasaki-Einstein solutions obtained from the Boothby-Wang construction from the orbifold K\"ahler-Einstein solutions of Section \ref{KEsection} with those described in \cite{BoTo18c}. Both types live on simply connected 7-manifolds with the rational cohomology of the 2-fold connected sum of $S^2\times S^5$ with torsion in $H^4$. However, they are of a somewhat different nature. Those in \cite{BoTo18c} have holomorphic twist 2, whereas, those in this paper have holomorphic twist 1. Assuming they have the same torsion group one may ask whether they are homotopy equivalent, have the same diffeomorphism type, and/or belong to inequivalent CR structures, contact structures, etc.

\begin{ack}
The authors of this paper have benefitted from conversations with Vestislav Apostolov, David Calderbank, Hongnian Huang, Eveline Legendre, and Carlos Prieto. 
\end{ack}

\section{The Projective Bundles}
We view our projective bundle as a special type of stage 3 Bott manifold, namely the Bott tower $M_3(0,n_1,n_2)$ where $n_1,n_2\in \bbz$. We recall the definition of a \emph{stage $k$ Bott manifold} of the \emph{Bott tower of
  height $n$}:
\[
M_n \xrightarrow{\pi_n}M_{n-1} \xrightarrow{\pi_{n-1}} \cdots\rightarrow M_2
\xrightarrow{\pi_2} M_1 =\bbc\bbp^1 \xrightarrow{\pi_1} \{pt\}
\]
where $M_k$ is defined inductively as the total space of the projective bundle 
$$\bbp(\BOne\oplus L_k)\xrightarrow{\pi_k}M_{k-1}$$ 
with fiber $\bbc\bbp^1$, and some holomorphic line bundle $L_k$ on $M_{k-1}$.We refer to \cite{GrKa94,BoCaTo17} for details. In this paper we treat twist 1 stage 3 Bott manifolds $M_3(0,n_1,n_2)$ and orbifolds. Explicitly these are ruled manifolds of the form 
\begin{equation}\label{Sneqn}
S_\bfn=\bbp\bigl(\BOne\oplus \calo(n_1,n_2)\bigr)\lra \bbc\bbp^1\times\bbc\bbp^1
\end{equation} 
with $n_1,n_2\in\bbz^*$. To this we add an orbifold structure given in terms of the log pair $(S_\bfn,\grD_\bfm)$  where 
\begin{equation}\label{grDeqn}
\grD_\bfm=\Bigl(1-\frac{1}{m_0}\Bigr)e_0+\Bigl(1-\frac{1}{m_\infty}\Bigr)e_\infty
\end{equation}
and $e_0,e_\infty$ are defined by the zero and infinity sections of the bundle. 
A motivation of our study comes from the 1986 paper of Koiso and Sakane \cite{KoSa86} where they study K\"ahler-Einstein metrics on projectivizations of the form $\bbp(\BOne\oplus L_1\otimes_{ext} L_2)$ over $N\times N$ where $N$ has a K\"ahler-Einstein metric. For this reason we refer to our log pairs $(S_\bfn,\grD_\bfm)$ as {\it KS orbifolds}. The underlying complex manifolds are smooth toric varieties whose K\"ahler geometry was studied in \cite{BoCaTo17}. 
The positive integers $m_0,m_\infty$ are called {\it ramification indices}. For each choice of orbifold K\"ahler form $\omega_{\bfn,\bfm}$, defining an integer K\"ahler class on $(S_\bfn,\Delta_\bfm)$ we have a principal $S^1$ orbibundle
$$S^1\lra M^7\lra (S_\bfn,\grD_\bfm)$$ 
with an induced Sasakian structure $\cals_{\bfn,\bfm}$; moreover, when the class $[\gro_{\bfn,\bfm}]$ is primitive in $H^2_{orb}((S_\bfn,\grD_\bfm),\bbz)$, $M^7$ is simply connected. While there are only 2 equivalences classes of Fano KS manifolds, it is clear from Proposition \ref{orbFano} below that log Fano KS orbifolds are abundant.

In \cite{BoTo20a} we described a functor from the category of $S^3_\bfw$ Sasaki joins to the category of Bott orbifolds. However, as noted there, this functor is not surjective on objects. One purpose of this paper is to study this lack of surjectivity in the particular case of twist 1 stage 3 Bott orbifolds. 
A key notion of Sasaki joins is that of {\it cone decomposability} which can be thought of as the Sasaki version of de Rham decomposability in the K\"ahler case \cite{BHLT16}. Cone  decomposability is an invariant of the underlying Sasaki CR structure. We shall see how our KS orbifolds give rise to both cone decomposable Sasakian structures as well as Sasakian structures that are likely cone indecomposable. See Remark \ref{dec-indec} below.

Note that the ordinary Bott manifold $M_3(0,n_1,n_2)$ corresponds to the trivial orbifold $(S_\bfn,\emptyset)$. We denote by $KS^{orb}$ the set of  manifolds with these orbifold structures, that is,
$$KS^{orb}=\{(S_\bfn,\grD_\bfm)~|~n_1,n_2\in\bbz^*,~m_0,m_\infty\in\bbz^+\}.$$
Such orbifolds are the objects of a groupoid whose morphisms are biholomorphisms that intertwine the orbifold structures. Actually we are intererested in {\it polarized} KS orbifolds that are polarized by a (primitive) orbifold K\"ahler class $[\gro_{\bfn,\bfm}]\in H^2_{orb}(S_\bfn,\bbz)$. These form the objects of our groupoid $\calk\cals$ whose morphisms are biholomorphisms that intertwine the orbifold structures and intertwine their orbifold K\"ahler classes. We remark that since the objects of $\calk\cals$ are themselves orbifolds, we are working with a 2-category. The action of the Coexter group $\mathrm{Sym}_2\ltimes\bbz_2^3$ on $KS_{orb}$ is generated by the transposition $(S_\bfn,\grD_\bfm)\mapsto (S_{n_2,n_1},\grD_\bfm)$ and the fiber inversion sending $(S_\bfn,\grD_\bfm)\mapsto (S_{-\bfn},\grD_{m_\infty,m_0})$. Note the interchange of ramification indices $m_0,m_\infty$ induced by the fiber inversion map.

Of special interest is the full subgroupoid $\calk\cals^{mon}$ of monotone KS orbifolds which we now describe. 
Consider the orbifold canonical divisor or dually the orbifold first Chern class. In particular, when is the orbifold $(S_\bfn,\grD_\bfm)$ log Fano, equivalently when does $c_1^{orb}(S_\bfn,\grD_\bfm)$ lie in the K\"ahler cone?
From Section 1.3 of \cite{BoTo18c} we have
\begin{eqnarray}\label{c1orb2}
c_1^{orb}(S_\bfn,\grD_\bfm)   &=& \Bigl(2-\frac{n_1}{m_\infty}\Bigr)y_1 +\Bigl(2-\frac{n_2}{m_\infty}\Bigr)y_2+\Bigl(\frac{1}{m_0}+\frac{1}{m_\infty}\Bigr)y_3 \notag \\
                                             &=& \Bigl(2+\frac{n_1}{m_0}\Bigr)x_1 +\Bigl(2+\frac{n_2}{m_0}\Bigr)x_2+\Bigl(\frac{1}{m_0}+\frac{1}{m_\infty}\Bigr)x_3. \end{eqnarray}
with respect to the invariant bases $\{x_1,x_2,x_3\}$ and $\{y_1,y_2,y_3\}$, respectively.
For $i=1,2$ here $x_i=y_i$ is the class of the Fubini-Study metric on the ith factor of the product $\bbc\bbp^1\times \bbc\bbp^1$ pulled back to $S_\bfn$, while $x_3 (y_3)$ is the Poincar\'e dual of $e_\infty (e_0)$, respectively. From this we arrive at a special case of Lemma 1.2 of \cite{BoTo18c}
\begin{proposition}\label{orbFano}
The orbifold $(S_\bfn,\grD_\bfm)$ in $KS^{orb}$ is log Fano if and only if the inequalities
$$\frac{n_1}{m_\infty}<2,\quad \frac{n_2}{m_\infty}<2,\quad -\frac{n_1}{m_0}<2,\quad -\frac{n_2}{m_0}<2$$
hold.
\end{proposition}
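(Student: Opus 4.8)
The plan is to use the characterization, already recalled above, that $(S_\bfn,\grD_\bfm)$ is log Fano precisely when its orbifold first Chern class $c_1^{orb}(S_\bfn,\grD_\bfm)$ lies in the K\"ahler cone. Since the underlying complex manifold $S_\bfn=\bbp(\BOne\oplus\calo(n_1,n_2))$ is a smooth projective toric $3$-fold, I would determine the K\"ahler cone as the interior of the nef cone, which is dual to the Mori cone of effective curves; for a toric variety that cone is generated by the torus-invariant curves. With $c_1^{orb}$ given explicitly in (\ref{c1orb2}) in both the $\{x_1,x_2,x_3\}$ and $\{y_1,y_2,y_3\}$ bases, the argument then reduces to intersecting $c_1^{orb}$ against the generators of the Mori cone and rewriting each resulting inequality.

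First I would enumerate the invariant curves. There is the fiber class $F$ of $\pi\colon S_\bfn\to\bbc\bbp^1\times\bbc\bbp^1$, together with the invariant curves lying in the two disjoint sections $e_0$ and $e_\infty$, each a copy of $\bbc\bbp^1\times\bbc\bbp^1$ and so contributing two curve classes. The necessary input is the intersection table, which I would obtain from three facts: $\pi_*$ sends a section-curve to a ruling of the base; the sections are disjoint, so $e_0\cdot e_\infty=0$; and, as already recorded, $y_3=x_3+n_1x_1+n_2x_2$, that is $[e_0]-[e_\infty]=n_1x_1+n_2x_2$. Restricting $[e_\infty]$ to $e_\infty$ and using disjointness identifies the normal bundle of $e_\infty$ with $-(n_1x_1+n_2x_2)|_{e_\infty}$, and symmetrically for $e_0$. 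The upshot is that the invariant curves in $e_0$ are dual to $x_1,x_2$, those in $e_\infty$ are dual to $y_1,y_2$, and $F$ is dual to the common class $x_3=y_3$ in the sense that $x_3\cdot F=1$.

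Consequently a class $a_1x_1+a_2x_2+a_3x_3=b_1y_1+b_2y_2+b_3y_3$ (with $b_3=a_3$, $b_1=a_1-n_1a_3$, $b_2=a_2-n_2a_3$) is K\"ahler if and only if it is positive on every generator of the Mori cone, which translates into the five conditions $a_1>0$, $a_2>0$, $b_1>0$, $b_2>0$ and $a_3=b_3>0$. Substituting the coefficients of $c_1^{orb}$ read off from (\ref{c1orb2}), the fiber condition $a_3>0$ becomes $\tfrac1{m_0}+\tfrac1{m_\infty}>0$, which holds automatically since $m_0,m_\infty\in\bbz^+$, while the remaining four conditions are exactly $2+\tfrac{n_1}{m_0}>0$, $2+\tfrac{n_2}{m_0}>0$, $2-\tfrac{n_1}{m_\infty}>0$, $2-\tfrac{n_2}{m_\infty}>0$, i.e.\ the four displayed inequalities. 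This yields both implications at once.

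The step I expect to be the main obstacle is pinning down the Mori cone correctly---equivalently, getting the self-intersections (normal bundles) of $e_0$ and $e_\infty$, and hence the signs in the intersection table, right---since this is where the twisting data $n_1,n_2$ enters and where a sign error would swap the roles of the two bases. Once the duality that the $e_0$-curves pair with the $x$-basis, the $e_\infty$-curves with the $y$-basis, and the fiber with $x_3=y_3$ is established, everything else is a direct substitution, and the fact that the fiber inequality is automatic explains why only four of the five facets of the K\"ahler cone survive in the statement.
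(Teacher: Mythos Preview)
Your argument is correct and follows the same route the paper takes: the paper simply records the two expressions \eqref{c1orb2} for $c_1^{orb}(S_\bfn,\grD_\bfm)$ and then invokes the K\"ahler cone description (citing Lemma~1.2 of \cite{BoTo18c}) to read off the four inequalities, while you supply the toric/Mori-cone justification of that K\"ahler cone description yourself. One phrasing to tighten: you write that $F$ is ``dual to the common class $x_3=y_3$'', but $x_3\neq y_3$ as cohomology classes (indeed $y_3=x_3+n_1x_1+n_2x_2$); what you mean, and what you actually use, is that $x_3\cdot F=y_3\cdot F=1$ so that the fiber condition reads $a_3=b_3>0$ in either basis.
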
 

We obtain a primitive K\"ahler class $\frac{c_1^{orb}(S_\bfn,\grD_\bfm)}{\cali_{\bfn,\bfm}}$ in $H^2_{orb}((S_\bfn,\grD_\bfm),\bbz)$ with  {\it Fano index} given by
\begin{equation}\label{index}
\cali_{\bfn,\bfm}=\gcd(2mv_0v_\infty- n_1v_0,2mv_0v_\infty- n_2v_0,v_0+v_\infty)
\end{equation}
where $\bfm=(m_0,m_\infty)=m(v_0,v_\infty)=m\bfv$ and $m=\gcd(m_0,m_\infty)$. Here we have used the identity $x_3=y_3-n_1x_1-n_2x_2$. The orbifold $(S_\bfn,\grD_\bfm)$ is log Fano if and only if 
$$c_1^{orb}(S_\bfn,\grD_\bfm)= \frac{1}{mv_0v_\infty}\Bigl[\Bigl(2mv_0v_\infty- n_1v_0\Bigr)y_1 +\Bigl(2mv_0v_\infty- n_2v_0\Bigr)y_2+\Bigl(v_0+v_\infty\Bigr)y_3\Bigr] \label{c1orb3}$$
 is positive.

The Poincar\'e dual to $c_1^{orb}(S_\bfn,\grD_\bfm)$ is the orbifold anti-canonical divisor which is an ample $\bbq$-divisor on $S_\bfn$ when $c_1^{orb}(S_\bfn,\grD_\bfm)$ is positive. The primitive class $\frac{c_1^{orb}(S_\bfn,\grD_\bfm)}{\cali_{\bfn,\bfm}}$ can be represented by an orbifold K\"ahler form $\gro_{\bfn,\bfm}$ on $S_\bfn$, or equivalently an orbifold K\"ahler metric $g_{\bfn,\bfm}$.

\section{The Orbifold Boothby-Wang Construction}\label{sassec}
The well known Boothby-Wang Theorem \cite{BoWa} that associates regular contact structures to the total space of $S^1$ bundles over symplectic manifolds generalizes easily to the orbifold category. See for example Theorem 7.1.3 of \cite{BG05}. Here as in this theorem we are interested in Sasakian structures over K\"ahlerian structures. Given an integral K\"ahler class $[\gro_{\bfn,\bfm}]\in H^{1,1}_{orb}((S_\bfn,\grD_\bfm),\bbz)$ we shall always choose a K\"ahler form $\gro_{\bfn,\bfm}$ with maximal symmetry. In this case the total space $M$ of the corresponding principal $S^1$ orbibundle has a maximal family $\gt^+$ of Sasakian structures, called the {\it Sasaki cone}. Moreover, choosing a connection 1-form $\eta$ on $M$ such that $d\eta=\gro_{\bfn,\bfm}$ determines a natural Sasakian structure $\cals_{\bfn,\bfm}=(\xi,\eta,\Phi,g)$ in $\gt^+$. We refer to this construction as the {\it orbifold Boothby-Wang construction}. Generally, unlike $(S_\bfn,\grD_\bfm)$, the total space $M$ has an orbifold structure whose underlying topological space is singular with finite cyclic singularities. The general problem of finding conditions when $M$ is smooth is quite subtle \cite{Qui71,Kol04,KeLa21}: however, Kegel and Lange show that  $M$ is a smooth manifold if and only if $H^r_{orb}(M,\bbz)=0$ for all $r>2n+1$. More precisely  $M$ is smooth if and only if multiplication by the Euler class $ H^r_{orb}(\calz,\bbz)\fract{\cup_e}{\ra{3.0}} H^{r+2}_{orb}(\calz,\bbz)$
in the Gysin sequence of the orbibundle $M\ra{2.0}\calz$
is an isomorphism for all $r>2n+1$. As we shall see in Section \ref{joinsect} precise conditions can be obtained in certain special cases (see also \cite{BG05}). 

The question arises as to when there exists a constant scalar curvature Sasaki metric in $\gt^+$ and how many. In the Gorenstein case this was answered by Futaki, Ono, and Wang \cite{FOW06}, but the general CSC case remains open. Some partial results have been obtained by Legendre \cite{Leg10}, and by the authors \cite{BoTo14a,BoTo19a} and references therein.

\begin{remark}
In categorical language we work with the groupoid $\cals\calk\cals$ whose objects $\{M^7_{\bfn,\bfm}\}$ are simply connected quasiregular Sasaki 7-orbifolds whose projective algebraic base is a Koiso-Sakane orbifold, and whose morphisms are orbifold biholomorphisms. Here the objects are classes of Sasakian or Koiso-Sakane structures, thus again, they are 2-categories. We are mainly interested in certain subgroupoids, namely, the full subgroupoid $\cals\calk\cals^{F}$ of positive Sasakian structures whose base Koiso-Sakane orbifold is log Fano. 
Actually, we consider the full subgroupoid $\cals\calk\cals^{MF}$ whose K\"ahler form $\gro_{\bfn,\bfm}$ lies in $\frac{c_1^{orb}(S_\bfn,\grD_\bfm)}{\cali_{\bfn,\bfm}}$, that is the Koiso-Sakane base orbifold is $(S_\bfn,\grD_\bfm)$ is monotone log Fano.   
We also define the subgroupoids $\cals\calk\cals_\bfm,\cals\calk\cals^{F}_\bfm,\cals\calk\cals^{MF}_\bfm$ consisting of the corresponding objects with fixed $\bfm$. The objects of the groupoid $\cals\calk\cals_\bfm$ are Sasaki classes on the 7-manifolds $M^7_\bfm$ whose cohomology we describe in the next section. The morphisms are those induced by diffeomorphisms that intertwine the corresponding Sasakian structures. 
\end{remark}

\section{The Topology of the Orbifolds}
In this section we describe both the orbifold topology of $(S_\bfn,\grD_\bfm)$ and topology of the total space of the principal $S^1$ orbibundles over the log Fano Koiso-Sakane orbifolds $(S_\bfn,\grD_\bfm)$. First we recall the cohomology ring of the complex manifold $S_\bfn$ which is well known and can be found in \cite{BoCaTo17} and references therein.
\begin{equation}\label{Sncohring}
H^*(S_\bfn,\bbz)=\bbz[y_1,y_2,y_3]/\bigl(y_1^2,y_2^2,y_3(-n_1y_1-n_2y_2+y_3)\bigr)
\end{equation}
We remark that the addition of orbifold structures on the invariant divisors does not effect the cohomology $H^*(S_\bfn,\bbz)$; however, as we shall see it strongly effects the cohomology of the Sasakian 7-manifolds.

\subsection{The Orbifold Cohomology Groups}
Next we compute the orbifold cohomology groups.

\begin{lemma}\label{orbcoh}
$$H^{r}_{orb}((S_\bfn,\grD_\bfm),\bbz)=\begin{cases} \bbz &~~\text{if $r=0$} \\
                                     \bbz^3&  ~~\text{if $r=2$} \\
                                     \bbz^3 \oplus \bbz_{m^0}^2\oplus \bbz_{m^\infty}^2&  ~~\text{if $r=4$} \\
                               \bbz\oplus  \bbz_{m^0}^3\oplus \bbz_{m^\infty}^3 & ~~\text{if $r=6$} \\
                                \bbz_{m^0}^3\oplus \bbz_{m^\infty}^3 & ~~\text{if $r=8,10,\cdots$} \\
                                  0 & ~~\text{if $r$ is odd.}
                                  \end{cases}$$
\end{lemma}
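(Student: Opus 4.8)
The plan is to compute $H^*_{orb}((S_\bfn,\grD_\bfm),\bbz)$ by localizing the orbifold structure at the two disjoint branch divisors $e_0,e_\infty$ and separating a free contribution, coming from the coarse space $S_\bfn$, from a torsion contribution produced by the cyclic groups $\bbz_{m_0},\bbz_{m_\infty}$. Concretely I would set up a Mayer--Vietoris decomposition built from the smooth (manifold) locus $U=S_\bfn\setminus(e_0\cup e_\infty)$ together with orbifold tubular neighborhoods $N_0,N_\infty$ of the two sections carrying the transverse $\bbz_{m_0}$- and $\bbz_{m_\infty}$-actions. Since $e_0$ and $e_\infty$ are the zero and infinity sections they are disjoint, so the two local contributions do not interact and may be analyzed separately. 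The overlaps $N_i\cap U$ are smooth: they are the unit normal circle bundles modulo the free $\bbz_{m_i}$-action, i.e.\ the circle bundles of $\nu_i^{\otimes m_i}$ over $e_i\cong\bbc\bbp^1\times\bbc\bbp^1$, while $U$ itself retracts onto the circle bundle of $\calo(n_1,n_2)$ over $\bbc\bbp^1\times\bbc\bbp^1$.

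For the free part I would argue that the orbifold and its coarse space agree rationally, so $H^*_{orb}\otimes\bbq\cong H^*(S_\bfn;\bbq)$, and by \eqref{Sncohring} the latter is concentrated in even degrees with Betti numbers $1,3,3,1$ in degrees $0,2,4,6$ and zero above. Because $H^*(S_\bfn;\bbz)$ is torsion free, this already pins down the free summands $\bbz,\bbz^3,\bbz^3,\bbz$ in degrees $0,2,4,6$; the vanishing in odd degrees then follows since every group entering the Mayer--Vietoris sequence, namely $H^*(S_\bfn)$, the cohomology of the circle bundles, and the cyclic-group contributions, is concentrated in even degrees.

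The torsion I would compute locally at each $e_i$ via the orbifold Thom/Gysin isomorphism for the normal $\bbz_{m_i}$-orbibundle. The decisive point is that on the smooth overlap the normal Euler class is multiplied by the ramification index, so the relevant Gysin map is multiplication by $m_i$ followed by cup product with $c_1(\nu_i)=\pm(n_1x_1+n_2x_2)$; its cokernels produce a copy of $\bbz_{m_i}$ for each class of the reduced cohomology $\widetilde H^{*}(e_i)=\widetilde H^{*}(\bbc\bbp^1\times\bbc\bbp^1)$, which has rank $2$ in degree $2$ and rank $1$ in degree $4$. Each such class then generates an infinite $\bbz_{m_i}$-tower under repeated cup product with the order-$m_i$ orbifold Euler class, a class of degree $2j$ first appearing two degrees higher. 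Summing these towers yields torsion $\bbz_{m_i}^{2}$ beginning in degree $4$ and stabilizing to $\bbz_{m_i}^{3}$ from degree $6$ on, and combining the $e_0$- and $e_\infty$-contributions reproduces $\bbz_{m_0}^2\oplus\bbz_{m_\infty}^2$ in degree $4$ and $\bbz_{m_0}^3\oplus\bbz_{m_\infty}^3$ in degrees $\geq 6$.

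The main obstacle is the bookkeeping in the Mayer--Vietoris and Thom sequences: one must identify the connecting homomorphisms precisely (the factors $m_i$ and the cup products with $c_1(\nu_i)$) and resolve the resulting extension problems. The genuinely delicate part is verifying that the pure normal classes, i.e.\ the powers of the orbifold Euler class over the fundamental class of each $e_i$, are absorbed into the image of $H^*(S_\bfn)$ rather than contributing new torsion. This is exactly what forces the reduced cohomology $\widetilde H^*(e_i)$, rather than the full $H^*(e_i)$, to govern the torsion, and correspondingly what makes $H^2_{orb}$ torsion free. Checking this absorption degree by degree, together with the disjointness of $e_0$ and $e_\infty$, is the crux; once it is established, the remaining degrees follow by the parity and stabilization already noted.
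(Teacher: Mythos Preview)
Your Mayer--Vietoris strategy is genuinely different from the paper's. The paper instead runs the Leray spectral sequence of the classifying map $p\colon \mathsf{B}(S_\bfn,\Delta_\bfm)\to S_\bfn$: for $s\geq 1$ the sheaf $R^{2s}p_*\bbz$ is supported on $e_0\sqcup e_\infty$ with stalks the group cohomology $H^{2s}(\bbz_{m_i};\bbz)\cong\bbz_{m_i}$, so that $E_2^{p,2s}=H^p(e_0;\bbz_{m_0})\oplus H^p(e_\infty;\bbz_{m_\infty})$ while $E_2^{p,0}=H^p(S_\bfn;\bbz)$, and since every nonzero term sits in even bidegree the sequence collapses at $E_2$. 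This packaging is tidier than your decomposition: the free and torsion pieces are already separated on the $E_2$ page, and the Thom/Gysin bookkeeping you describe is replaced by simply reading off $H^*(\bbc\bbp^1\times\bbc\bbp^1;\bbz_{m_i})$.

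There is, however, a real gap at precisely the step you flag as the crux. You assert but do not prove that the pure normal classes are ``absorbed'', i.e.\ that $\widetilde H^*(e_i)$ rather than $H^*(e_i)$ governs the torsion. In the paper's framework this is the assertion $E_2^{0,2s}=0$, justified there by the claim that $R^{2s}p_*\bbz$ has no nonzero global section. But that sheaf is the pushforward of the constant sheaves $\underline{\bbz_{m_i}}$ from the closed subsets $e_i$, so in fact $E_2^{0,2s}=\bbz_{m_0}\oplus\bbz_{m_\infty}$; what really occurs is a nontrivial \emph{extension} in which these $E_\infty^{0,2s}$ terms are absorbed into the free part $E_\infty^{*,0}$. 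That extension problem is exactly your ``absorption'', and it is not automatic: computing the orbifold Picard group directly gives $H^2_{orb}\cong\bbz\langle x_1,x_2,\alpha,\beta\rangle/(m_0\alpha-m_\infty\beta-n_1x_1-n_2x_2)\cong\bbz^4/\bbz(n_1,n_2,-m_0,m_\infty)$, which is $\bbz^3$ only when $\gcd(n_1,n_2,m_0,m_\infty)=1$ (equivalently, one checks $\pi_1^{orb}\cong\bbz_d$ with $d$ this gcd). So without a coprimality hypothesis the absorption you need actually fails and the stated formula is off; with it, you still owe an argument that the relevant extensions are maximally nontrivial in each degree, and your outline does not supply one.
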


\begin{proof}
Using Lemma 4.3.7 of \cite{BG05} we compute the Leray spectral sequence of the classifying map 
$$p:\mathsf{B}(S_\bfn,\grD_\bfm)\ra{2.5} ~S_\bfn.$$ 
The Leray sheaf of $p$ is the derived functor sheaf $R^sp_*\bbz$, that is, the sheaf associated to the presheaf $U\mapsto H^s(p^{-1}(U),\bbz)$. For $s>0$ the stalks of $R^sp_*\bbz$ at points of $U$ vanish if $U$ lies in the regular locus of $(S_\bfn,\grD_\bfm)$ which is the complement of the union of the zero $e_0$ and infinity $e_\infty$ sections of the natural projection $S_\bfn\ra{1.6} \bbc\bbp^1\times \bbc\bbp^1$.  At points of $e_0$ and  $e_\infty$ the fibers of $p$ are the Eilenberg-MacLane spaces $K(\bbz_{m^0},1)$ and $K(\bbz_{m^\infty},1)$, respectively. So at points of $e_0(e_\infty)$ the stalks are the group cohomology $H^s(\bbz_{m^0},\bbz)\bigl(H^s(\bbz_{m^\infty},\bbz)\bigr)$. This is $\bbz$ for $s=0$ and $\bbz_{m^0}(\bbz_{m^\infty})$ at points of $e_0(e_\infty)$ when $s>0$ is even; it vanishes when $s$ is odd. The $E_2$ term of the Leray spectral sequence of the map $p$ is 
$$E_2^{r,s}=H^r(S_\bfn,R^sp_*\bbz)$$
and by Leray's theorem this converges to the orbifold cohomology  $H^{r+s}_{orb}((S_\bfn,\grD_\bfm),\bbz)$. Now $E_2^{r,0}=H^r(S_\bfn,\bbz)$ and $E_2^{r,s}=0$ for $r$ or $s$ odd. For $r=0$ the only continuous section of $R^sp_*\bbz$ is the 0 section which implies that $E_2^{0,s}=0$ for all $s$. Now we have $E_2^{2r,2s}=0$ for $r>2$ and  
\begin{eqnarray*}
E_2^{2,2s}&=&H^2(S_\bfn,R^{2s}p)=H^2(e_0,\bbz_{m^0})\oplus  H^2(e_\infty,\bbz_{m^\infty})=\bbz_{m^0}^2\oplus \bbz_{m^\infty}^2, \\
E_2^{4,2s}&=&H^4(S_\bfn,R^{2s}p)=H^4(e_0,\bbz_{m^0})\oplus  H^4(e_\infty,\bbz_{m^\infty})=\bbz_{m^0}\oplus \bbz_{m^\infty}.
\end{eqnarray*}
One easily sees that this spectral sequence collapses whose limit is the orbifold cohomology $H^{r}_{orb}((S_\bfn,\grD_\bfm),\bbz)$ which implies the result. 
\end{proof}

In addition Lemma \ref{orbcoh} implies

\begin{lemma}\label{pi1orblem}
$\pi_1^{orb}(S_\bfn,\grD_\bfm)=\BOne.$
\end{lemma}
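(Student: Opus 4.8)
The plan is to combine two ingredients: a structural description showing that $\pi_1^{orb}(S_\bfn,\grD_\bfm)$ is \emph{cyclic}, and the cohomological input of Lemma \ref{orbcoh} showing that its abelianization \emph{vanishes}. Since a cyclic group coincides with its abelianization, the two together force triviality.

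First I would exhibit $(S_\bfn,\grD_\bfm)$ as an orbifold fiber bundle over the simply connected base $\bbc\bbp^1\times\bbc\bbp^1$. Recall that $S_\bfn=\bbp(\BOne\oplus\calo(n_1,n_2))$ is a holomorphic $\bbc\bbp^1$-bundle over $\bbc\bbp^1\times\bbc\bbp^1$, and the orbifold divisors $e_0,e_\infty$ are precisely the zero and infinity sections, each mapping isomorphically onto the base. Hence the orbifold structure is transverse to the fibers and constant along the base: every fiber is the football orbifold, i.e. $\bbc\bbp^1$ carrying two cone points of orders $m_0$ and $m_\infty$ at its intersections with $e_0$ and $e_\infty$. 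A standard computation (presenting the fiber as $S^2$ with two punctures whose meridians $\gamma_0,\gamma_\infty$ satisfy $\gamma_0\gamma_\infty=1$, together with $\gamma_0^{m_0}=\gamma_\infty^{m_\infty}=1$) gives the orbifold fundamental group of the fiber as $\bbz_{\gcd(m_0,m_\infty)}=\bbz_m$. Applying the orbifold homotopy long exact sequence to this bundle and using $\pi_1(\bbc\bbp^1\times\bbc\bbp^1)=1$ yields exactness of $\bbz_m\to\pi_1^{orb}(S_\bfn,\grD_\bfm)\to 1$, so $\pi_1^{orb}(S_\bfn,\grD_\bfm)$ is cyclic of order dividing $m$.

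Second, from Lemma \ref{orbcoh} I would read off $H^1_{orb}=0$ and $H^2_{orb}=\bbz^3$. The universal coefficient theorem then gives $\mathrm{Hom}(H_1^{orb},\bbz)=0$, so $H_1^{orb}$ is torsion, while $\mathrm{Ext}(H_1^{orb},\bbz)\hookrightarrow H^2_{orb}=\bbz^3$ forces $\mathrm{Ext}(H_1^{orb},\bbz)=0$, so $H_1^{orb}$ is free; hence $H_1^{orb}=0$. Since $\pi_1^{orb}(S_\bfn,\grD_\bfm)$ is cyclic, it equals its own abelianization $H_1^{orb}$, and is therefore trivial. The cohomological bookkeeping here is routine; the crux is the first step, namely justifying that $(S_\bfn,\grD_\bfm)$ genuinely is an orbifold fiber bundle with football fiber and that the orbifold homotopy exact sequence applies, so that the fiber meridians normally generate and surject onto the total orbifold group. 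Once cyclicity is secured, Lemma \ref{orbcoh} finishes the argument.
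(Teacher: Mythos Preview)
Your proposal is correct and follows essentially the same approach as the paper: both use the orbifold homotopy exact sequence of the fiber bundle $(S_\bfn,\grD_\bfm)\to\bbc\bbp^1\times\bbc\bbp^1$ with football fiber to conclude that $\pi_1^{orb}$ is a quotient of the fiber group (hence abelian---you sharpen this to cyclic), and then invoke Lemma~\ref{orbcoh} together with universal coefficients to kill $H_1^{orb}$. The only cosmetic difference is that the paper writes the fiber as $\bbc\bbp^1[\bfv]/\bbz_m$ and simply asserts abelianness, whereas you spell out the football presentation and obtain cyclicity explicitly; either suffices.
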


\begin{proof}
From the homotopy sequence of the orbibundle 
$$\bbc\bbp^1[\bfv]/\bbz_m\lra (S_\bfn,\grD_\bfm)\lra \bbc\bbp^1\times \bbc\bbp^1$$
one easily sees that $\pi_1^{orb}(S_\bfn,\grD_\bfm)$ is Abelian. But this implies 
$$\pi_1^{orb}(S_\bfn,\grD_\bfm)\approx H_1^{orb}(S_\bfn,\grD_\bfm),\bbz),$$
and this vanishes by Lemma \ref{orbcoh} and universal coefficients.
\end{proof}

\subsection{The Cohomology of the Branched Covers}
We consider the total space $M^7_{\bfn,\bfm}$ of an $S^1$ orbibundle over a KS orbifold $(S_\bfn,\grD_\bfm)$ with an orbifold K\"ahler form $\gro_{\bfn,\bfm}$. Note that in our case $S_\bfn$ is a smooth projective variety so $M^7_{\bfn,\bfm}$ is a Seifert bundle as defined in the foundational paper \cite{OrWe75} of Orlik and Wagreich (see also the more recent unpublished description by Koll\'ar \cite{Kol04c}). A fundamental result of Orlik and Wagreich says that every $\bbc^*$ Seifert bundle is a branched cover over a principal $\bbc^*$ bundle. In our case this branching occurs precisely along the orbifold divisor $\grD_\bfm$. The goal is to understand the integral cohomology of $M^7_{\bfn,\bfm}$. However, before treating the orbifold case, we need to consider the regular case.

\subsection{The Regular Case.}
For simplicity we treat only the case where the K\"ahler class is primitive, i.e.
\begin{equation}\label{primKahcl}
[\gro_\bfn]= c_1y_1+c_2y_2+c_3 y_3 ~\text{with $c_i\in\bbz^+$}, \qquad \gcd(c_1,c_2,c_3)=1.
\end{equation}
From Section 3.3 of \cite{BoCaTo17} this is a K\"ahler class for all positive $c_i$ if $n_1n_2>0$, whereas, if $n_1n_2<0$, it is a K\"ahler class for $c_1>0,~c_2>-n_2c_3,~c_3>0$.

One sees from the long exact homotopy sequence of the fibration 
\begin{equation}\label{regfib}
S^1\lra M^7_\bfn\lra S_\bfn
\end{equation}
that

\begin{lemma}\label{reglemtop}
Let $M^7_\bfn$ denote the total space of the principal $S^1$ bundle over a KS manifold $S_\bfn$ defined by a primitive K\"ahler class $[\gro_\bfn]\in H^2(S_\bfn,\bbz)$. Then $M^7_\bfn$ has the rational cohomology of the 2-fold connected sum $2\# (S^2\times S^5)$, and we have
\begin{enumerate}
\item $\pi_1(M^7_\bfn)=\BOne$,
\item $\pi_2(M^7_\bfn)= \bbz^2$.
\end{enumerate}
\end{lemma}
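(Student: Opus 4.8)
The plan is to exploit the long exact homotopy sequence of the principal circle bundle (\ref{regfib}), namely
\[
\cdots \lra \pi_k(S^1) \lra \pi_k(M^7_\bfn) \lra \pi_k(S_\bfn) \lra \pi_{k-1}(S^1) \lra \cdots,
\]
together with the known cohomology ring (\ref{Sncohring}) of the base. First I would establish the low-dimensional homotopy. Since $S_\bfn$ is a projective bundle $\bbp(\BOne\oplus\calo(n_1,n_2))$ over $\bbc\bbp^1\times\bbc\bbp^1$, iterated fibration arguments (or the Bott-tower structure) give $\pi_1(S_\bfn)=\BOne$ and $\pi_2(S_\bfn)\cong H_2(S_\bfn,\bbz)\cong\bbz^3$, dual to the three generators $y_1,y_2,y_3$. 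Because $\pi_1(S^1)=\bbz$ and $\pi_k(S^1)=0$ for $k\ge 2$, the sequence immediately yields $\pi_1(M^7_\bfn)=\BOne$, proving (1): the relevant segment reads $\pi_1(S^1)=\bbz \lra \pi_1(M^7_\bfn)\lra \pi_1(S_\bfn)=\BOne$, and the first map is the composite with the connecting homomorphism $\pi_2(S_\bfn)\lra\pi_1(S^1)=\bbz$, which is surjective precisely because the Euler class $[\gro_\bfn]$ is a primitive integral class (its pairing against the $\pi_2$-generators realizes a primitive vector, hence a surjection onto $\bbz$).

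For (2) I would read off $\pi_2$ from the four-term exact piece
\[
\pi_2(S^1)=0 \lra \pi_2(M^7_\bfn) \lra \pi_2(S_\bfn)=\bbz^3 \xrightarrow{\ \partial\ } \pi_1(S^1)=\bbz \lra \pi_1(M^7_\bfn)=\BOne.
\]
The connecting map $\partial$ is given by evaluation of the Euler class $[\gro_\bfn]=c_1y_1+c_2y_2+c_3y_3$ against $\pi_2(S_\bfn)=H_2(S_\bfn,\bbz)$; under the primitivity assumption $\gcd(c_1,c_2,c_3)=1$ this homomorphism $\bbz^3\to\bbz$ is surjective, so $\pi_2(M^7_\bfn)=\ker\partial\cong\bbz^2$, giving (2).

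For the rational-cohomology claim I would use the Gysin sequence of (\ref{regfib}) with the Euler class $e=[\gro_\bfn]$,
\[
\cdots \lra H^{k-2}(S_\bfn,\bbq) \xrightarrow{\ \cup e\ } H^{k}(S_\bfn,\bbq) \lra H^{k}(M^7_\bfn,\bbq) \lra H^{k-1}(S_\bfn,\bbq) \xrightarrow{\ \cup e\ } H^{k+1}(S_\bfn,\bbq) \lra \cdots.
\]
From (\ref{Sncohring}) the rational cohomology of $S_\bfn$ has Betti numbers $1,0,3,0,3,0,1$ in degrees $0$ through $6$, and $\cup e$ is the Lefschetz-type operator on this six-dimensional compact Kähler manifold. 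Computing kernels and cokernels of $\cup e$ degree by degree yields $H^*(M^7_\bfn,\bbq)$ with Betti numbers $1,0,2,0,0,2,0,1$, which is exactly the rational cohomology of $2\#(S^2\times S^5)$. Concretely, $\cup e:H^0\to H^2$ is injective (cutting the rank-$3$ group down to a rank-$2$ cokernel, matching $b_2=2$), while by hard Lefschetz the map $\cup e:H^2\to H^4$ and its iterates are as surjective/injective as the Poincaré-duality pairing forces, killing the middle-degree classes and leaving $b_5=2$ by duality.

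The main obstacle is verifying that the connecting/cup-product map $\cup e$ has precisely the expected rank in each degree, since this is where the primitivity hypothesis (\ref{primKahcl}) and the specific ring relations $y_1^2=y_2^2=0$, $y_3(-n_1y_1-n_2y_2+y_3)=0$ genuinely enter; one must check that multiplication by $c_1y_1+c_2y_2+c_3y_3$ is injective on $H^0$ and $H^2$ and surjective onto $H^4$ and $H^6$, which amounts to a finite linear-algebra computation in the explicit basis of monomials $\{1\},\{y_1,y_2,y_3\},\{y_1y_2,y_1y_3,y_2y_3\},\{y_1y_2y_3\}$. The integral statements (1) and (2) are comparatively routine once the surjectivity of $\partial$ is pinned to primitivity, so I expect the bookkeeping of the Gysin maps—rather than any conceptual difficulty—to be the crux.
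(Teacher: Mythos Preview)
Your proposal is correct and follows essentially the same approach as the paper: the homotopy long exact sequence of the fibration \eqref{regfib} for items (1) and (2), and the Leray--Serre/Gysin machinery for the rational cohomology. The paper's proof is in fact just the one-line remark ``One sees from the long exact homotopy sequence of the fibration \eqref{regfib} that\ldots'', with the rational statement handled immediately after the lemma by the Leray--Serre spectral sequence; your write-up simply unpacks these steps in more detail. The one minor stylistic difference is that you invoke hard Lefschetz to see that $\cup e:H^2(S_\bfn,\bbq)\to H^4(S_\bfn,\bbq)$ is an isomorphism, whereas the paper (in the paragraph following the lemma, leading to Theorem \ref{regs1bun}) writes down the explicit $3\times 3$ matrix $C$ of the $d_2$ differential in the monomial basis and computes its determinant directly---but this is the same linear-algebra check by a different name.
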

Thus, by the Hurewicz Theorem we also have $H_2(M^7_\bfn,\bbz)=\bbz^2$ which implies that $H^3(M^7_\bfn,\bbz)_{tor}=0$. One can also see from the Leray-Serre spectral sequence of the fibration \eqref{regfib} that $M^7_\bfn$ has the rational cohomology of the 2-fold connected sum $2\# (S^2\times S^5)$, and we also have $H^3(M^7_\bfn,\bbz)=0$. We now show that all such simply connected spaces have nonzero torsion in $H^4$. There will be zero torsion in $H^4$ if and only if the differential $d_2:E^{2,1}_2\lra E^{4,0}_2$ is invertible over $\bbz$. This differential is represented by the matrix 
\begin{equation}\label{Cmatrix}
C=
\begin{pmatrix}
c_2 & c_3 & 0 \\
c_1 & 0 & c_3 \\
0 & c_1+n_1c_3 & c_2+n_2c_3
\end{pmatrix}
\end{equation}
with respect to the basis $\{\gra\otimes y_1,\gra\otimes y_2,\gra\otimes y_3\}$ of $E^{2,1}_2$ and the basis $\{y_1y_2,y_1y_3,y_2y_3\}$ of $E^{4,0}_2$. But then we have
\begin{equation}\label{detC}
-\det C=c_3[c_2(c_1+n_1c_3)+c_1(c_2+n_2c_3)]
\end{equation}
which implies that the order of $H^4(M^7_\bfn,\bbz)$ is $c_3[c_2(c_1+n_1c_3)+c_1(c_2+n_2c_3)]$. This is always greater than $1$, so $H^4(M^7_\bfn,\bbz)_{tor}\neq 0$. Summarizing we have

\begin{theorem}\label{regs1bun}
Let $M^7_\bfn$ denote the total space of the principal $S^1$ bundle over a KS manifold $S_\bfn$ defined by a primitive K\"ahler class $[\gro_\bfn]=c_1x_1+c_2x_2+c_3x_3.$ Then 
$$H^{r}(M^7_{\bfn},\bbz)=\begin{cases} \bbz &~~\text{if $r=0,7$} \\
                                     \bbz^2&  ~~\text{if $r=2,5$} \\
                                     G_{reg} &  ~~\text{if $r=4$} \\
                                     0 & ~~\text{otherwise.}
                                  \end{cases}$$
where $G_{reg}$ is an Abelian group of order $c_3[c_2(c_1+n_1c_3)+c_1(c_2+n_2c_3)]$. Moreover, $G_{reg}$ is never the identity.
\end{theorem}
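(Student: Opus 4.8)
The plan is to combine the cohomological computation already carried out in the excerpt with the long exact homotopy sequence of the circle fibration \eqref{regfib}. First I would invoke the earlier discussion: from the homotopy sequence of $S^1\lra M^7_\bfn\lra S_\bfn$ together with $\pi_1(S_\bfn)=\BOne$ one gets $\pi_1(M^7_\bfn)=\BOne$, and since $H^2(S_\bfn,\bbz)=\bbz^3$ with the $S^1$-action killing exactly one generator (the polarizing class $[\gro_\bfn]$), one concludes $\pi_2(M^7_\bfn)=\bbz^2$, hence by Hurewicz $H_2(M^7_\bfn,\bbz)=\bbz^2$ is torsion-free and $H^3(M^7_\bfn,\bbz)_{\mathrm{tor}}=0$. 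This pins down the cohomology in degrees $0,2,3$ and, by Poincar\'e duality on the closed oriented $7$-manifold $M^7_\bfn$, in degrees $5,7$ as well, giving every entry in the table except $H^4$.

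Next I would analyze $H^4$ via the Leray--Serre spectral sequence of \eqref{regfib}. The only nontrivial differential affecting $H^4$ is $d_2\colon E_2^{2,1}\lra E_2^{4,0}$, where $E_2^{2,1}\cong H^2(S_\bfn,\bbz)\otimes H^1(S^1,\bbz)$ and $E_2^{4,0}\cong H^4(S_\bfn,\bbz)$, both free of rank $3$. Using the ring structure \eqref{Sncohring}, this differential is cup product with the Euler class $[\gro_\bfn]=c_1y_1+c_2y_2+c_3y_3$, so in the stated bases it is represented by the matrix $C$ of \eqref{Cmatrix}. The group $G_{reg}=H^4(M^7_\bfn,\bbz)$ is then the cokernel of $C$ (together with whatever survives from $E_2^{0,\ast}$ and $E_2^{4,0}$), a finite abelian group precisely because $\det C\neq 0$; its order is $|\det C|$, which \eqref{detC} computes to be $c_3[c_2(c_1+n_1c_3)+c_1(c_2+n_2c_3)]$.

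Finally, to show $G_{reg}$ is never trivial I would argue that this order is always strictly greater than $1$. Since the class is primitive and K\"ahler, the $c_i$ are positive integers (in the case $n_1n_2>0$; in the case $n_1n_2<0$ the K\"ahlerness constraints $c_1>0$, $c_2>-n_2c_3$, $c_3>0$ still force each factor to be a positive integer), so $c_3\geq 1$ and each of $c_1(c_2+n_2c_3)$ and $c_2(c_1+n_1c_3)$ is a positive integer; one checks the bracketed sum is at least $2$, whence the order is at least $2>1$. Thus $H^4(M^7_\bfn,\bbz)_{\mathrm{tor}}=G_{reg}\neq 0$.

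The main obstacle I anticipate is not the existence of nonzero torsion—that follows cleanly once $\det C$ is computed—but rather justifying that $G_{reg}$ is genuinely $\operatorname{coker} C$ with no extra free part or higher-differential contamination. Concretely, I would need to confirm that $d_2$ is injective on $E_2^{2,1}$ (equivalent to $\det C\neq 0$, which holds since all relevant factors are positive), that $E_2^{0,1}=H^1(S^1,\bbz)$ transgresses isomorphically so it contributes nothing to $H^4$, and that there are no surviving contributions in total degree $4$ from $E_2^{4,1}$ or other entries; these rank and degree bookkeeping checks are where care is required, but they are routine given the explicit ring \eqref{Sncohring}.
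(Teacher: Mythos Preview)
Your proposal is correct and follows essentially the same approach as the paper: the paper derives the low-degree cohomology from Lemma~\ref{reglemtop} (the homotopy sequence of \eqref{regfib}) together with Hurewicz and Poincar\'e duality, then computes $H^4$ as the cokernel of the differential $d_2\colon E_2^{2,1}\to E_2^{4,0}$ in the Leray--Serre spectral sequence, represented by the matrix $C$ of \eqref{Cmatrix}, and reads off the order from \eqref{detC}. Your additional bookkeeping remarks (injectivity of $d_2$, vanishing of the other $E_2$-terms in total degree $4$, and the case analysis for the positivity of the bracketed factor) are exactly the routine checks the paper leaves implicit when it simply asserts the order is always greater than $1$.
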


\begin{example}\label{stanprod}
The simplist case is the standard product $\bbc\bbp^1\times\bbc\bbp^1\times\bbc\bbp^1$ with K\"ahler class 
$$[\gro]=c_1x_1+c_2x_2+c_3x_3,\qquad \gcd(c_1,c_2,c_3)=1$$
and $n_1=n_2=0$ which gives $|G_{reg}|=2c_1c_2c_3$. Taking $c_1=c_2=c_3=1$ gives the homogeneous space 
$$M^7=\bigl(SU(2)\times SU(2)\times SU(2)\bigr)/U(1)\times U(1)$$
with its Sasaki-Einstein metric (see for example \cite{BFGK91}). Here $G_{reg}=\bbz_2$.

\end{example}

\begin{example}Monotone Fano case.
Up to equivalence there are two monotone Fano cases polarized by the first Chern class, namely, $\bfn=(1,-1)$ and $\bfn=(1,1)$.
From Equations \eqref{c1orb2} and \eqref{primKahcl} we have $c_1=1,c_2=3,c_3=2$ when $\bfn=(1,-1)$, and $c_1=1,c_2=1,c_3=2$ when $\bfn=(1,1)$.
These give $|G_{reg}|=20$ and $|G_{reg}|=12$, respectively. Note that $M^7_{1,-1}$ is conjecturely\footnote{See Remark \ref{dec-indec} below}  cone indecomposable; whereas, $M^7_{1,1}$ is cone decomposible. The latter is described by Theorem 2.5 in \cite{BoTo20a} with $\bfl_1=(1,1), \bfw_1=(1,1), \bfl_2=(1,2)$, and
$\bfw_2=(3,1)$. 
               
\end{example}

\subsection{The General Case; Branched Covers}
Returning to the orbifold case, we consider the Leray spectral sequence of the quotient map $M^7_{\bfn,\bfm}\fract{\pi}{\lra}(S_\bfn,\grD_\bfm)$ viewed as an $S^1$-Seifert bundle over $(S_\bfn,\grD_\bfm)$. Since the fibers are $S^1$ we only have the $R^1\pi_*\bbz_M$ direct image sheaf, the $E_2$ term of the Leray spectral sequence
\begin{equation}\label{E2Leray}
E^{p,q}_2=H^{p}(S_\bfn,R^q\pi_*\bbz)
\end{equation}
satisfies $E^{p,q}_2=0$ when $q\neq 0,1$, and this converges to $H^{p+q}(M^7_{\bfn,\bfm},\bbz)$. So the only differentials are those induced by  
\begin{equation}\label{d2map}
d_2:E^{0,1}_2=H^0(S_\bfn, R^1\pi_*\bbz_M)\lra E^{2,0}_2=H^2(S_\bfn,\bbz)\approx \bbz^3.
\end{equation} 
We note that the sheaf injection 
\begin{equation}\label{sheafmap1}
R^1\pi_*\bbz_M\lra \bbz_{S_\bfn}
\end{equation}
is multiplication by $1$ on the regular locus, multiplication by $m_0$ on $e_0$ and by $m_\infty$ on $e_\infty$. So the image of the map \eqref{d2map} is the order $\mu=\lcm(m_0,m_\infty)$ times a primitive orbifold K\"ahler class $[\gro_{\bfn,\bfm}]\in H^2_{orb}((S_\bfn,\grD_\bfm),\bbz)$. To compute the cohomology of such spaces we consider the $\bbc^*$ Seifert bundle over $(S_\bfn,\grD_\bfm)$ which is the cone $C(M^7_{\bfn,\bfm})=Y_{\bfn,\bfm}=M^7_{\bfn,\bfm}\times\bbr^+ $  so that $Y_{\bfn,\bfm}|_{r=1}=M^7_{\bfn,\bfm}$. Now $Y_{\bfn,\bfm}$ is a branched cover of $Y_{\bfn,\bfm}/\bbz_\mu$ with branching locus $\grD_\bfm$. This follows from \cite{Kol04c} by identifying the orbifold class $[\gro_{\bfn,\bfm}]$ with the rational first Chern class $c_1(Y/X)$ in \cite{Kol04c} with $X=S_\bfn$. Now generally the total space $M^7_{\bfn,\bfm}$ is a Sasaki orbifold whose underlying topological space is a compactly generated Hausdorff space which allows us to apply the theory of ramified covers \cite{Smi83,AgPr06}. The space $M^7_{\bfn,\bfm}$ is simply connected, and a $\mu$-fold cover of the total space of the ordinary $S^1$ bundle defined by the primitive integral class $\mu[\gro_{\bfn,\bfm}]$. Thus, we have the commutative diagram of Seifert orbibundles:
\begin{equation}\label{commdiag2}
\begin{matrix}
S^1 &\ra{2.0} &M^7_{\bfn,\bfm}& \fract{f}{\ra{2.0}} &(S_\bfn,\grD_\bfm) \\
\decdnar{}&&\decdnar{\pi}&&\decdnar{}\\
S^1/\bbz_\mu &\ra{2.0} &M^7_{\bfn,\bfm}/\bbz_\mu& \fract{f_\mu}{\ra{2.0}} &(S_\bfn,\emptyset).
\end{matrix}
\end{equation}
Note that since $M^7_{\bfn,\bfm}/\bbz_\mu$ is the total space of a principal $S^1$ bundle over the simply connected smooth projective algebraic variety $S_\bfn$ defined by a primitive integral class $\mu[\gro_{\bfn,\bfm}]$, it is a simply connected smooth 7-manifold. Moreover, the covering map $\pi$ induces a map of the corresponding Leray spectral sequences, and since the fiber is an $S^1$ the only nonzero higher direct image sheaf is $R^1\pi_*\bbz$. This gives the natural isomorphism $R^1\pi_*\bbq_M\approx \bbq_{S_\bfn}$ which then implies

\begin{lemma}\label{Qiso}
There is an isomorphism $H^*(M^7_{\bfn,\bfm},\bbq)\approx H^*(M^7_{\bfn,\bfm}/\bbz_\mu,\bbq)$.
\end{lemma}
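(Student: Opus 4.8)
The plan is to compare, via the commutative diagram \eqref{commdiag2}, the Leray spectral sequences of the two base projections $f\colon M^7_{\bfn,\bfm}\to S_\bfn$ and $f_\mu\colon M^7_{\bfn,\bfm}/\bbz_\mu\to S_\bfn$. Writing $\pi\colon M^7_{\bfn,\bfm}\to M^7_{\bfn,\bfm}/\bbz_\mu$ for the $\mu$-fold covering, so that $f=f_\mu\circ\pi$ on underlying spaces, the square built from $\pi$, $f$, $f_\mu$ and $\mathrm{id}_{S_\bfn}$ induces a morphism of Leray spectral sequences from that of $f_\mu$ to that of $f$, abutting to the pullback $\pi^*\colon H^*(M^7_{\bfn,\bfm}/\bbz_\mu,\bbq)\to H^*(M^7_{\bfn,\bfm},\bbq)$. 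The strategy is to prove this morphism is already an isomorphism on the $E_2$-page and then invoke the comparison theorem for spectral sequences to deduce that $\pi^*$ is an isomorphism on the abutments, which is precisely Lemma \ref{Qiso}.

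First I would record the shape of both spectral sequences. Every fiber of $f$ and of $f_\mu$ is topologically a circle---over $e_0$ and $e_\infty$ the Seifert fibers $S^1/\bbz_{m_0}$ and $S^1/\bbz_{m_\infty}$ are themselves homeomorphic to $S^1$---so the higher direct images vanish for $q\ge 2$ and, as in \eqref{E2Leray}, only the rows $q=0,1$ survive. On the row $q=0$ both sheaves are the constant sheaf $\bbq_{S_\bfn}$ and the comparison map is the identity, hence an isomorphism on every $E_2^{p,0}=H^p(S_\bfn,\bbq)$. On the row $q=1$, the map $f_\mu$ is an ordinary principal $S^1$-bundle over the smooth simply connected variety $S_\bfn$, so $R^1f_{\mu*}\bbq\approx\bbq_{S_\bfn}$; on the other side, rationalizing the integral sheaf injection \eqref{sheafmap1} (multiplication by $m_0$ along $e_0$ and by $m_\infty$ along $e_\infty$) gives the isomorphism $R^1f_*\bbq\approx\bbq_{S_\bfn}$ already noted just above.

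The crux is therefore to check that the induced comparison map $R^1f_{\mu*}\bbq\to R^1f_*\bbq$ is an isomorphism of sheaves, which I would verify stalkwise. Over a regular point $\pi$ restricts on fibers to the $\mu$-fold covering $S^1\to S^1$, so the induced map on the first rational cohomology of the fiber is multiplication by $\mu$, a nonzero rational and hence invertible over $\bbq$; over $e_0$ and $e_\infty$ the relevant fiber degrees are modified by the ramification indices $m_0\mid\mu$ and $m_\infty\mid\mu$ but stay nonzero rationals, so the stalk maps remain isomorphisms there as well. I expect this computation at the branch divisor $\grD_\bfm$ to be the only delicate point, precisely because the Seifert structure degenerates along $e_0$ and $e_\infty$; everywhere else the argument is formal. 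It would then follow that $R^1f_{\mu*}\bbq\to R^1f_*\bbq$ is a sheaf isomorphism, hence an isomorphism on each $E_2^{p,1}=H^p(S_\bfn,\bbq)$.

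With the comparison morphism an isomorphism on all of $E_2$, the comparison theorem yields an isomorphism on $E_\infty$ and, the filtrations being finite, on the abutments; thus $\pi^*\colon H^*(M^7_{\bfn,\bfm}/\bbz_\mu,\bbq)\to H^*(M^7_{\bfn,\bfm},\bbq)$ is an isomorphism, establishing the stated isomorphism $H^*(M^7_{\bfn,\bfm},\bbq)\approx H^*(M^7_{\bfn,\bfm}/\bbz_\mu,\bbq)$.
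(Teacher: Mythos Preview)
Your proposal is correct and follows essentially the same route as the paper: both compare the Leray spectral sequences of $f$ and $f_\mu$ over $S_\bfn$ via the commutative diagram \eqref{commdiag2}, observe that the fibers are circles so only the rows $q=0,1$ contribute, and use the rational isomorphism $R^1f_*\bbq\approx\bbq_{S_\bfn}$ (the rationalization of \eqref{sheafmap1}) to conclude. Your version is simply more explicit than the paper's terse sentence, in particular your stalkwise check that the comparison map on $R^1$ is multiplication by a nonzero rational along the branch locus $\grD_\bfm$ spells out what the paper leaves implicit.
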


However, we would also like information about integral cohomology groups. For this we consider the {\bf transfer homomorphism} for ramified covers \cite{Smi83}. Following \cite{AgPr06} we apply this to our branched cover $\pi: M^7_{\bfn,\bfm}\ra{2.0} M^7_{\bfn,\bfm}/\bbz_\mu$ . Theorem 5.4 of \cite{AgPr06} says that the transfer homomorphism 
$$\grt:H^*(M^7_{\bfn,\bfm},\bbz)\ra{2.0} H^*(M^7_{\bfn,\bfm}/\bbz_\mu,\bbz)$$
induces multiplication by $\mu$ in $H^*(M^7_{\bfn,\bfm}/\bbz_\mu,\bbz)$. We set $G_{reg}=H^4(M^7_{\bfn,\bfm}/\bbz_\mu,\bbz)$ and apply this to $*=4$. 
If $\gcd(|G|,\mu)=1$ we see that 
$$\grt\circ \pi^*:G_{reg}=H^4(M^7_{\bfn,\bfm}/\bbz_\mu,\bbz)\ra{2.0} G_{reg}=H^4(M^7_{\bfn,\bfm}/\bbz_\mu,\bbz)$$ 
is an isomorphism which implies that $\pi^*:H^4(M^7_{\bfn,\bfm}/\bbz_\mu,\bbz)\ra{2.0} H^4(M^7_{\bfn,\bfm},\bbz)$ is injective. Thus, $\pi^*(G_{reg})$ is a subgroup of $H^4(M^7_{\bfn,\bfm},\bbz)$ in this case. When  $\gcd(|G_{reg}|,\mu)\neq 1$ the homomorphism $\grt\circ \pi^*$ has a non-trivial kernel. This kernel consists of all the prime factors $\bbz_{p_i^{r_1}}\times\cdots \times \bbz_{p_k^{r_k}}$ of $G_{reg}$ such that $p_i$ is a prime in $\mu$. Denote this group by $G_{reg}^\mu$ In this case $H^4(M^7_{\bfn,\bfm},\bbz)$ contains the subgroup of $G_{reg}$ containing only those prime factors whose primes $p$ are not in $\mu$, that is the factor group $G_{reg}/G_{reg}^\mu$. Summarizing we have 

\begin{lemma}\label{H4lem}
$H^4(M^7_{\bfn,\bfm},\bbz)$ contains $G_{reg}/G_{reg}^\mu$
\end{lemma}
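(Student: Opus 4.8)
The plan is to read off the subgroup of $H^4(M^7_{\bfn,\bfm},\bbz)$ directly from the transfer data already assembled, reducing the statement to an elementary observation about multiplication by $\mu$ on a finite abelian group. The single nontrivial input is Theorem 5.4 of \cite{AgPr06}, which I take as given: the composite $\grt\circ\pi^*$ is multiplication by $\mu$ on $G_{reg}=H^4(M^7_{\bfn,\bfm}/\bbz_\mu,\bbz)$.

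First I would pass to the primary decomposition of the finite abelian group $G_{reg}$ and split off the part coprime to $\mu$. Writing $G_{reg}=K\oplus G_{reg}^\mu$, where $G_{reg}^\mu$ collects the $p$-primary summands with $p\mid\mu$ and $K$ collects the remaining $p$-primary summands, one has $K\cong G_{reg}/G_{reg}^\mu$ and $\gcd(|K|,\mu)=1$ by construction.

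The key step is then immediate. Because $K$ is a summand stable under multiplication by $\mu$ and $\gcd(|K|,\mu)=1$, multiplication by $\mu$ restricts to an automorphism of $K$. Since $\grt\circ\pi^*$ is multiplication by $\mu$ on all of $G_{reg}$, its restriction $(\grt\circ\pi^*)|_K$ is injective; hence $\pi^*|_K\colon K\to H^4(M^7_{\bfn,\bfm},\bbz)$ has trivial kernel. Therefore $\pi^*(K)$ is a subgroup of $H^4(M^7_{\bfn,\bfm},\bbz)$ isomorphic to $K\cong G_{reg}/G_{reg}^\mu$, which is the assertion. Note that phrasing the argument through the complementary summand $K$, rather than through the kernel of multiplication by $\mu$, avoids any imprecision about $\mu$-torsion inside the $p$-primary pieces with $p\mid\mu$.

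The only point demanding genuine care sits upstream of this formal argument, namely confirming that the branched-cover hypotheses of \cite{AgPr06} really are met by $\pi\colon M^7_{\bfn,\bfm}\to M^7_{\bfn,\bfm}/\bbz_\mu$---that the total space is a compactly generated Hausdorff space and that the branch locus is exactly $\grD_\bfm$ with multiplicity $\mu$---so that the multiplication-by-$\mu$ identity for $\grt\circ\pi^*$ holds on the nose. Granting that, as indicated in the discussion preceding the statement, everything reduces to the primary-decomposition computation above.
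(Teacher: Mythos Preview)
Your proof is correct and follows essentially the same approach as the paper: both invoke Theorem~5.4 of \cite{AgPr06} to identify $\grt\circ\pi^*$ with multiplication by $\mu$ on $G_{reg}$, then use the primary decomposition to split off the summand $K$ coprime to $\mu$, on which $\pi^*$ is therefore injective. Your formulation through the complementary summand $K\cong G_{reg}/G_{reg}^\mu$ is in fact slightly cleaner than the paper's, which somewhat loosely identifies $G_{reg}^\mu$ with the kernel of multiplication by $\mu$ (not literally true on the $p$-primary pieces with $p\mid\mu$), though the paper's intended definition of $G_{reg}^\mu$ and its conclusion coincide with yours.
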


We also have

\begin{lemma}\label{H3lem}
$H^3(M^7_{\bfn,\bfm},\bbz)=0.$
\end{lemma}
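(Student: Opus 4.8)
The goal is to show $H^3(M^7_{\bfn,\bfm},\bbz)=0$ for the total space of the $S^1$-Seifert bundle over the KS orbifold. I would work directly with the Leray spectral sequence \eqref{E2Leray} of the quotient map $\pi$, which is the tool already set up in this subsection, and extract the degree-$3$ piece. Since the fiber is $S^1$, only $R^0\pi_*\bbz=\bbz_{S_\bfn}$ and $R^1\pi_*\bbz$ contribute, so the only groups that can feed into $H^3(M^7_{\bfn,\bfm},\bbz)$ are $E^{3,0}_\infty$ and $E^{2,1}_\infty$. The plan is to show that both of these vanish.

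For the $E^{3,0}$ contribution, note $E^{3,0}_2=H^3(S_\bfn,\bbz)=0$, since the cohomology ring of $S_\bfn$ in \eqref{Sncohring} is concentrated in even degrees; hence $E^{3,0}_\infty=0$ as well. The substantive term is $E^{2,1}_2=H^2(S_\bfn,R^1\pi_*\bbz)$. Here I would use the sheaf structure of $R^1\pi_*\bbz$ described just before \eqref{sheafmap1}: it is the subsheaf of $\bbz_{S_\bfn}$ equal to the full $\bbz$ on the regular locus and to $m_0\bbz$, $m_\infty\bbz$ along $e_0$, $e_\infty$ respectively, so in particular it is a constructible sheaf with stalks $\bbz$ everywhere and is isomorphic to $\bbz_{S_\bfn}$ as an abstract sheaf (the multiplication is an isomorphism of $\bbz$ onto the subsheaf). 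Thus $E^{2,1}_2\cong H^2(S_\bfn,\bbz)\cong\bbz^3$. The key point is then that the differential $d_2\colon E^{2,1}_2\to E^{4,0}_2=H^4(S_\bfn,\bbz)$ is \emph{injective}: this is exactly the degree-$2$ analogue of the matrix computation in the regular case, since $d_2$ is multiplication by $\mu[\gro_{\bfn,\bfm}]$ composed with the intersection pairing, and the matrix $C$ of \eqref{Cmatrix} (with $c_i$ the components of $\mu[\gro_{\bfn,\bfm}]$) has nonzero determinant by \eqref{detC}. Injectivity of $d_2$ on $E^{2,1}_2$ forces $E^{2,1}_3=E^{2,1}_\infty=0$.

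Putting these together, the associated graded of $H^3(M^7_{\bfn,\bfm},\bbz)$ has both relevant pieces $E^{2,1}_\infty$ and $E^{3,0}_\infty$ equal to zero, so $H^3(M^7_{\bfn,\bfm},\bbz)=0$. The main obstacle is the injectivity of $d_2$ on $E^{2,1}_2$ over $\bbz$ (not merely rationally): I need to check that $d_2$ is genuinely given by cupping with the integral primitive class $\mu[\gro_{\bfn,\bfm}]$ landing in $H^4(S_\bfn,\bbz)$, so that the same matrix $C$ that appeared in Theorem \ref{regs1bun} governs it, and that this map has trivial kernel even though $\det C\neq 0$ only guarantees injectivity (not surjectivity) over $\bbz$. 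Fortunately, injectivity is all that is needed here, and $\det C\neq 0$ from \eqref{detC} gives it immediately. An alternative, cleaner route that sidesteps the sheaf bookkeeping would be to observe via Lemma \ref{Qiso} and Lemma \ref{reglemtop} that $H^3(M^7_{\bfn,\bfm},\bbq)\cong H^3(M^7_{\bfn,\bfm}/\bbz_\mu,\bbq)=0$, and then rule out integral torsion in $H^3$ using the transfer argument of Lemma \ref{H4lem} together with the fact (from Lemma \ref{reglemtop} and the Hurewicz theorem) that $H^3(M^7_{\bfn,\bfm}/\bbz_\mu,\bbz)=0$ is torsion-free; but the direct spectral-sequence argument above is the most self-contained.
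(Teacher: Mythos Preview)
Your argument is correct and uses the same Leray spectral sequence as the paper, but you and the paper justify the key step---injectivity of $d_2\colon E^{2,1}_2\to E^{4,0}_2$---differently. You identify $R^1\pi_*\bbz\cong\bbz_{S_\bfn}$ so that $E^{2,1}_2\cong\bbz^3$, then argue that $d_2$ is cup product with the integral class $\mu[\gro_{\bfn,\bfm}]$ and hence is represented by the same matrix $C$ as in \eqref{Cmatrix}, with $\det C\neq 0$ by \eqref{detC}. The paper instead appeals to Lemma~\ref{Qiso}: since $H^4(M^7_{\bfn,\bfm},\bbq)=0$, the cokernel $E^{4,0}_\infty$ of $d_2$ is torsion, so $d_2\otimes\bbq$ is surjective onto $\bbq^3$; as $E^{2,1}_2$ is free of rank $3$, this forces $d_2$ to be injective over $\bbz$. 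Your route is more explicit and does not rely on Lemma~\ref{Qiso}, but it does require checking that under the abstract sheaf isomorphism $R^1\pi_*\bbz\cong\bbz_{S_\bfn}$ the differential really is cup product with $\mu[\gro_{\bfn,\bfm}]$ (which follows by naturality from the diagram \eqref{commdiag2}, as you note). The paper's route sidesteps this identification entirely, needing only the rational rank. Both yield $E^{2,1}_\infty=0$ and hence the lemma.
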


\begin{proof}
The only non-zero term of total degree 3 in \eqref{E2Leray} is
$$E^{2,1}_2=H^2(S_\bfn,R^1\pi_*\bbz)$$
where we have a differential 
$$d_2:E^{2,1}_2=H^2(S_\bfn,R^1\pi_*\bbz)\ra{2.0} E^{4,0}_2=H^4(S_\bfn,\bbz)=\bbz^3.$$
This differential gives torsion in $H^4(M^7_{\bfn,\bfm},\bbz)$ implying that $H^4(M^7_{\bfn,\bfm},\bbq)$ vanishes which in turn implies that $E^{2,1}_r=0$ for $r>2$ which proves the lemma.                                                            
\end{proof}

\begin{theorem}\label{7manthm}
Let $M^7_{\bfn,\bfm}$ denote the total space of the principal $S^1$ orbibundle over a KS orbifold $(S_\bfn,\grD_\bfm)$ defined by a primitive orbifold K\"ahler class $[\gro_{\bfn,\bfm}]=c_1x_1+c_2x_2+c_3x_3$ such that $\mu[\gro_{\bfn,\bfm}]$ is a primitive class in $H^2(S_\bfn,\bbz)$.
Then 
$$H^{r}(M^7_{\bfn,\bfm},\bbz)=\begin{cases} \bbz &~~\text{if $r=0,7$} \\
                                     \bbz^2&  ~~\text{if $r=2,5$} \\
                                     G &  ~~\text{if $r=4$} \\
                                     0 & ~~\text{otherwise.}
                                  \end{cases}$$
where $G$ is an Abelian group that contains $G_{reg}/G^\mu_{reg}$. In particular, if $\gcd(|G_{reg}|,\mu)=1$ then $G$ contains $G_{reg}$.
\end{theorem}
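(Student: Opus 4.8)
The plan is to read off $H^*(M^7_{\bfn,\bfm},\bbz)$ degree by degree from the Leray spectral sequence \eqref{E2Leray} together with the three consequences already extracted from it—the rational isomorphism of Lemma \ref{Qiso}, the vanishing of Lemma \ref{H3lem}, and the containment of Lemma \ref{H4lem}—supplemented by the universal coefficient theorem and Poincar\'e duality. First I would fix the free ranks: by Lemma \ref{Qiso} and Theorem \ref{regs1bun} the groups $H^r(M^7_{\bfn,\bfm},\bbq)$ agree with those of $M^7_{\bfn,\bfm}/\bbz_\mu$, hence with the rational cohomology of $2\#(S^2\times S^5)$, so the free ranks in degrees $0,\dots,7$ are $1,0,2,0,0,2,0,1$. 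Vanishing in degrees $>7$ is immediate from \eqref{E2Leray}, since that spectral sequence has only the two rows $q=0,1$ and $H^p(S_\bfn,-)=0$ for $p>6$; thus $E_2^{p,q}=0$ whenever $p+q>7$.

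Next I would dispose of the low degrees. Degree $0$ is $\bbz$ by connectedness, and degree $1$ vanishes since $M^7_{\bfn,\bfm}$ is simply connected. Because $H_1=0$, the universal coefficient theorem gives $H^2_{tor}\cong\mathrm{Ext}(H_1,\bbz)=0$, so with the rank-$2$ computation above $H^2=\bbz^2$. Degree $3$ vanishes by Lemma \ref{H3lem}. In degree $4$ the rank is $0$, so $H^4=:G$ is finite, and Lemma \ref{H4lem} exhibits the subgroup $G_{reg}/G^\mu_{reg}$ (and, when $\gcd(|G_{reg}|,\mu)=1$, all of $G_{reg}$), which is exactly what the theorem asserts about $G$.

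For the upper half I would invoke Poincar\'e duality. From $H^2=\bbz^2$, $H^3=0$, and $H_1=0$ the universal coefficient theorem forces $H_2=\bbz^2$: its free rank is $2$, while $\mathrm{Ext}(H_2,\bbz)\hookrightarrow H^3=0$ kills its torsion. Duality then yields $H^7\cong H_0=\bbz$, $H^6\cong H_1=0$, and $H^5\cong H_2=\bbz^2$, completing the table.

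The step I expect to be the real obstacle is the justification of \emph{integral} Poincar\'e duality, i.e. that $M^7_{\bfn,\bfm}$ is a closed oriented (integral homology) manifold under the hypothesis that $\mu[\gro_{\bfn,\bfm}]$ is primitive in $H^2(S_\bfn,\bbz)$; here one would appeal to the Kegel--Lange smoothness criterion recalled in Section \ref{sassec}. A general Sasaki orbifold satisfies only rational duality, which recovers the ranks but not the torsion-freeness of $H^5$ nor the vanishing of $H^6$. Absent smoothness one must instead extract these from \eqref{E2Leray} directly, and the genuinely delicate point is then the behaviour of the differential on the twisted sheaf cohomology $H^\bullet(S_\bfn,R^1\pi_*\bbz)$ at the primes dividing $\mu$, precisely where the transfer comparison with $M^7_{\bfn,\bfm}/\bbz_\mu$ supplies no information.
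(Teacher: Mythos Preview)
Your approach is essentially the paper's: read off the cohomology from Lemmas~\ref{Qiso}, \ref{H3lem}, \ref{H4lem} together with the universal coefficient theorem and duality. The paper's own proof is in fact terser than yours---it first deduces $\pi_1^{orb}(M^7_{\bfn,\bfm})=\BOne$ from the long exact homotopy sequence of classifying spaces (a step you take for granted; note that $\pi_1$ of the underlying space is a quotient of $\pi_1^{orb}$, so this gives ordinary simple connectivity), and then simply asserts that ``the remainder follows'' from the three lemmas, without spelling out the UCT/duality argument you supply for degrees $5,6,7$. Your concern about integral Poincar\'e duality when $M^7_{\bfn,\bfm}$ is only an orbifold is therefore a legitimate one that the paper's proof does not explicitly address either.
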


\begin{proof}
Since $[\gro_{\bfn,\bfm}]$ is a primitive orbifold class and $(S_\bfn,\grD_\bfm)$ has $\pi_1^{orb}(S_\bfn,\grD_\bfm)=\BOne$, then the orbifold $M^7_{\bfn,\bfm}$ has $\pi_1^{orb}(M^7_{\bfn,\bfm})=\BOne$ by the long exact homotopy sequence of the classifying spaces. The remainder of the theorem then follows from lemmas \ref{Qiso}, \ref{H4lem}, \ref{H3lem}.
\end{proof}

\begin{remark}
It follows from Theorem \ref{regs1bun} that $G_{reg}$ is never the identity; although,  we can have $G_{reg}=G_{reg}^\mu$ so that $G_{reg}/G^\mu=\BOne$ in this case. However, this does not imply that $H^4(M^7_{\bfn,\bfm},\bbz)$ is the identity. 
\end{remark}

\begin{remark}\label{dec-indec}
Let $M$ be a Boothby-Wang  Sasaki manifold over $\bbc\bbp^1\times\bbc\bbp^1$.
The set of KS orbifolds $\calk\cals_0$ splits into a disjoint union two types; those that have a polarization such that the corresponding Boothby-Wang constructed Sasaki orbifold is represented by a quasi-regular ray in the $\bfw$-cone of a join of the form $M \star_{\bfl} S^3_{\bfw}$, and those that do not have such a polarization. The former are characterized by the condition $n_1n_2>0$ (see Theorem \ref{whenS3join} below) and the corresponding Sasaki manifolds are called {\it cone decomposable} (see Definitions 3.1 and 4.1 in \cite{BHLT16}). It remains an open question whether a Boothby-Wang constructed Sasaki orbifold over the latter type, with $n_1n_2<0$, is necessarily {\it cone indecomposable} or simply not represented by a quasi-regular ray in the $\bfw$-cone of a join of the form $M \star_{\bfl} S^3_{\bfw}$.
\end{remark}

\subsection{When $M^7$ is an $S^3_\bfw$-join}\label{joinsect}
Generally, we are interested in how $G$ depends on $\bfm$ and $\bfn$.
Unfortunately, Theorem \ref{7manthm} does not give us much useful information about this dependence. However, we can determine this dependence in a particular cone decomposable case, namely when the space $M^7_{\bfn,\bfm}$ can be represented as a join of the form $M_{\bfl,\bfw}=M\star_\bfl S^3_\bfw$ where $M$ is a principal $S^1$ bundle over $\bbc\bbp^1\times \bbc\bbp^1$. Let $M^5$ denote the total space of the Boothby-Wang bundle over $\bbc\bbp^1\times \bbc\bbp^1$ determined by the K\"ahler class $k_1y_1+k_2y_2$ with $k_1,k_2\in \bbz^+$. When $k_1$ and $k_2$ are relatively prime, $M^5$ is diffeomorphic to $S^2\times S^3$. More generally $S^2\times S^3$ is an $k$-fold cover of $M^5$, namely we have $M^5\approx (S^2\times S^3)/\bbz_k$ where $k=\gcd(k_1,k_2)$. Moreover, if $\gcd(l_\infty,w_0)=\gcd(l_\infty,w_\infty)=1$, $M_{\bfl,\bfw}$ will be smooth, and we have an $S^1$ bundle
$$S^1\ra{2.6} M^5\times S^3_\bfw\ra{2.6} M^5\star_\bfl S^3_\bfw$$
which gives the homotopy exact sequence
\begin{equation}\label{homexjoin2}
0\ra{2.0}\pi_2(M^5)\ra{2.0}\pi_2(M^5\star_\bfl S^3_\bfw)\ra{2.0}\bbz\ra{2.0}\bbz_k \ra{2.0}\pi_1(M^5\star_\bfl S^3_\bfw)\ra{2.0}1.
\end{equation}
So when $k=1$ so $M^5=S^2\times S^3$, the join $M^7_{\bfl,\bfw}$ will also be simply connected with $\pi_2(M^7_{\bfl,\bfw})=\bbz^2$. More generally, the topological analysis proceeds as in Section 4 of \cite{BoTo14a}, see also \cite{BoTo18c}. We consider the commutative diagram of fibrations
\begin{equation}\label{orbifibrationexactseq}
\begin{matrix}
(S^2\times S^3)/\bbz_k\times S^3_\bfw &\ra{2.6} &M_{\bfl,\bfw}&\ra{2.6}
&\mathsf{B}S^1 \\
\decdnar{=}&&\decdnar{}&&\decdnar{\psi}\\
(S^2\times S^3)/\bbz_k \times S^3_\bfw&\ra{2.6} & \bbc\bbp^1\times\bbc\bbp^1\times\mathsf{B}\bbc\bbp^1[\bfw]&\ra{2.6}
&\mathsf{B}S^1\times \mathsf{B}S^1\, 
\end{matrix} \qquad \qquad
\end{equation}
where $\mathsf{B}G$ is the classifying space of a group $G$ or Haefliger's classifying space \cite{Hae84} of an orbifold if $G$ is an orbifold. Note that $H^2((S^2\times S^3)/\bbz_k,\bbz)=\bbz\oplus \bbz_k$. The diagram gives a map of spectral sequences, and we note that the lower fibration is a product of well understood fibrations. The $E_2$ term of the top fibration is 
\begin{equation}\label{E_2}
E_2^{p,q}=H^p(\mathsf{B}S^1,H^q(S^{2}\times S^3\times S^3_\bfw,\bbz))\approx \bbz[s]\otimes\grL[\gra,\grb,\grg],
\end{equation}
where $\gra$ is a 2-class, $\grb,\grg$ are 3-classes, and $s_1,s_2$ are the positive generators of $H^*(\mathsf{B}S^1,\bbz)$. By the Leray-Serre Theorem this converges to $H^{p+q}(M_{\bfl,\bfw},\bbz)$. The non-vanishing differentials of the bottom product fibration are, first $d_4(\grb)=s_1^2$ and second $d_4(\grg)=w_0w_\infty s_2^2$ and those induced by naturality. As described in \cite{BoTo14a} we also have $\psi^*s_1=l_\infty s$ and $\psi^*s_2=-l_0 s$. So by naturality the differentials of the top fibration are  $d_4(\grb)=l_\infty^2 s^2$ and $d_4(\grg)=w_0w_\infty l_0^2 s^2$. Now $M_{\bfl,\bfw}$ is smooth if and only if $\gcd(l_\infty,w_0w_\infty)=1$. This gives $H^4(M_{\bfl,\bfw},\bbz)=\bbz_{l_\infty^2}\times \bbz_{w_0w_\infty l_0^2}=\bbz_{w_0w_\infty l_0^2l_\infty^2}$.

\section{Admissible Projective Bundles}\label{adsect}
{\it Admissible projective bundles} were described in general \cite{ACGT08}. Here we will restrict ourselves to a specific type of these, namely projective bundles of the form 
$$S_\bfn=\bbp(\BOne \oplus \calo(n_1,n_2))\ra{1.8} \bbc\bbp^1\times\bbc\bbp^1$$ 
that satisfy the following conditions:
\begin{itemize}
\item For $i=1,2$ let $(\pm \gro_i,\pm g_i)$ be K\"ahler metrics with constant scalar curvature $\pm 2s_i=\pm 4/n_i$ on $\bbc\bbp^1$. [It is assumed that $n_i\neq 0$.] The $\pm$ means that either $+g_i$ or $-g_i$ is positive definite.
\item $\calo(n_1,n_2)\ra{1.8} \bbc\bbp^1\times\bbc\bbp^1$ is the holomorphic line bundle over $\bbc\bbp^1\times\bbc\bbp^1$ which satisfy
$$c_1(\calo(n_1,n_2))=\frac{1}{2\pi}[\gro_1 +\gro_2]=n_1 x_1+n_2 x_2$$
\end{itemize}

On admissible projective bundles such as the ones defined above we can now construct the {\it admissible metrics},  \cite{ACGT08}. Here we recover the main points of this construction.
 
Consider the standard circle action
on $S_\bfn=\bbp(\BOne \oplus \calo(n_1,n_2))\ra{1.8} \bbc\bbp^1\times\bbc\bbp^1$. It extends to a holomorphic
$\bbc^*$ action. The open and dense set $S_\bfn^0$ of stable points with respect to the
latter action has the structure of a principal circle bundle over the stable quotient.
The hermitian norm on the fibers induces via a Legendre transform a function
$\gz:S_\bfn^0\rightarrow (-1,1)$ whose extension to $S_\bfn$ consists of the critical manifolds
$e_0:= \gz^{-1}(1)=\bbp(\BOne \oplus 0)$ and $e_\infty:= \gz^{-1}(-1)=\bbp(0 \oplus \calo(n_1,n_2))$.
Letting $\theta$ be a connection one form for the Hermitian metric on $S_\bfn^0$, with curvature
$d\theta = \omega_1 +\omega_2$, an admissible K\"ahler metric and form are
given (up to scale) by the respective formulas
\begin{equation}\label{g}
g=\frac{1+r_1\gz}{r_1}g_1 + \frac{1+r_2\gz}{r_2}g_2+\frac {d\gz^2}
{\Theta (\gz)}+\Theta (\gz)\theta^2,\quad
\omega = \frac{1+r_1\gz}{r_1}\omega_{1}+ \frac{1+r_2\gz}{r_2}\omega_{2} + d\gz \wedge
\theta,
\end{equation}
valid on $S_\bfn^0$. Here $\Theta$ is a smooth function with domain containing
$(-1,1)$ and $r_i$, $i =1,2$ are real numbers of the same sign as
$g_{i}$ and satisfying $0 < |r_i| < 1$. The complex structure yielding this
K\"ahler structure is given by the pullback of the base complex structure
along with the requirement $Jd\gz = \Theta \theta$. The function $\gz$ is hamiltonian
with $K= J \,grad\, \gz$ a Killing vector field. In fact, $\gz$ is the moment 
map on $S_\bfn$ for the circle action, decomposing $M$ into 
the free orbits $S_\bfn^0 = \gz^{-1}((-1,1))$ and the special orbits 
$\gz^{-1}(\pm 1)$. Finally, $\theta$ satisfies
$\theta(K)=1$.

Now $g$ is a (positive definite) K\"ahler metric which extends smoothly to all of $S_\bfn$ if and only if
$\Theta$ satisfies the following positivity and boundary
conditions
\begin{align}
\label{positivity}
(i)\ \Theta(\gz) > 0, \quad -1 < \gz <1,\quad
(ii)\ \Theta(\pm 1) = 0,\quad
(iii)\ \Theta'(\pm 1) = \mp 2.
\end{align}

The K\"ahler class $\Omega_{\bfr} = [\omega]$ of an admissible metric as in \eqref{g} is also called
{\it admissible} and is uniquely determined by the parameters
$r_1, r_2$, once the data associated with $M$ (i.e.
$s_i=2/n_i$, $g_i$ etc.) is fixed. Indeed, we have
$$ \Omega_{\bfr} = \frac{[\omega_1]}{r_1} + \frac{[\omega_2]}{r_2} + 2\pi\Xi,$$
where $\Xi$ is the Poincare dual of $e_0+e_\infty$.
For a more thorough description of $\Xi$, please consult Section 1.3 of \cite{ACGT08}.
Note that on $S_\bfn$ any K\"ahler class is admissible up to scale.
Using that $y_3-x_3 = n_1x_1+n_2 x_2$, we can also write
$$
\begin{array}{ccl}
\Omega_{\bfr}/2\pi & = & \frac{n_1}{r_1} x_1+\frac{n_2}{r_2} x_2 + x_3+y_3  \\
\\
& = & \frac{n_1(1+r_1)}{r_1} x_1+\frac{n_2(1+r_2)}{r_2} x_2 +2x_3\\
\\
&=& \frac{n_1(1-r_1)}{r_1} x_1+\frac{n_2(1-r_2)}{r_2} x_2 +2y_3.
\end{array}
$$

Define a function $F(\gz)$ by the formula $\Theta(\gz)=F(\gz)/p_c(\gz)$, where
$p_c(\gz) =(1 + r_1 \gz)(1 + r_2 \gz)$.
Since $p_c(\gz)$ is positive for $-1\leq\gz\leq1$, conditions
\eqref{positivity}
are equivalent to the following conditions on $F(\gz)$.

\begin{align}
\label{positivityF}
(i)\ F(\gz) > 0, \quad -1 < \gz <1,\quad
(ii)\ F(\pm 1) = 0,\quad
(iii)\ F'(\pm 1) = \mp 2p_c(\pm1).
\end{align}

\subsection{Orbifolds} 
Now we allow our admissible metrics to compactify as orbifold metrics on the log pair
$$(S_\bfn,\Delta_\bfm)=\left(\bbp(\BOne \oplus \calo(n_1,n_2),\Delta_\bfm \right)\ra{1.8} \bbc\bbp^1\times\bbc\bbp^1,$$
where 
$$\Delta_\bfm = (1-1/m_0) D_1 + (1-1/m_\infty) D_2 = (1-1/m_0) e_0 + (1-1/m_\infty) e_\infty,$$
and $m_0,m_\infty \in \bbz^+$.
Then \eqref{positivity} generalizes to
\begin{equation}
\label{positivityThetaorbifold}
\begin{array}{cl}
(i)& \Theta(\gz) > 0, \quad -1 < \gz <1,\quad \\
(ii)& \Theta(\pm 1) = 0,\quad \\
(iii)& \Theta'(-1) =  2/m_\infty,\quad\text{and}\quad \Theta'(1) =  -2/m_0.
\end{array}
\end{equation}
and, with $\Theta(\gz)=F(\gz)/p_c(\gz)$ as above, we get that this is equivalent to
\begin{equation}
\label{positivityForbifold}
\begin{array}{cl}
(i)& F(\gz) > 0, \quad -1 < \gz <1,\quad \\
(ii)& F(\pm 1) = 0,\quad \\
(iii)& F'(-1) =  2p_c(-1)/m_\infty,\quad\text{and}\quad F'(1) =  -2p_c(1)/m_0.
\end{array}
\end{equation}

Note that this does not change the expression for $\Omega_r$ above whereas from \eqref{c1orb2} we already know that the adjusted Chern class is

$$
\begin{array}{ccl}
c_1^{orb}(S_\bfn,\grD_\bfm) 
                                             &=& \Bigl(2-\frac{n_1}{m_\infty}\Bigr)x_1 +\Bigl(2-\frac{n_2}{m_\infty}\Bigr)x_2+\Bigl(\frac{1}{m_0}+\frac{1}{m_\infty}\Bigr)y_3 \\
                                             &=& \Bigl(2+\frac{n_1}{m_0}\Bigr)x_1 +\Bigl(2+\frac{n_2}{m_0}\Bigr)x_2+\Bigl(\frac{1}{m_0}+\frac{1}{m_\infty}\Bigr)x_3.
                                             \end{array}.$$
                                             
\subsection{Connection with $S^3_\bfw$-joins}
Consider $(S_\bfn,\Delta_\bfm)=\left(\bbp(\BOne \oplus \calo(n_1,n_2),\Delta_\bfm \right)\ra{1.8} \bbc\bbp^1\times\bbc\bbp^1,$ polarized with a primitive orbifold K\"ahler class $[\omega_{\bfn,\bfm}]$. As mentioned above, this will be the rescale of some admissible K\"ahler class $\Omega_\bfr$ determined by $\bfr = (r_1,r_2)$. If $n_1n_2>0$, we will say that $[\omega_{\bfn,\bfm}]$ is {\em diagonally admissible} if and only if $r_1=r_2$. Note that if $n_1n_2<0$, it is never possible to have $r_1=r_2$, since the sign of $r_i$ is equal to the sign of $n_i$. Note that diagonally admissible is equivalent to being admissible (up to scale) as defined (more narrowly) in Section 2.5.1 of \cite{BoTo19a} with $N = \bbc\bbp^1\times\bbc\bbp^1$
and $[\omega_N] = \frac{n_1}{n} x_1+\frac{n_2}{n}x_2$, where $n=(\text{sign of}\,n_i)\gcd(|n_1|,|n_2|)$.

Indeed, suppose $n_1n_2>0$ and $r_1=r_2=r$ in the above setting. Then the admissible metric simplifies to
\begin{equation}\label{gdeg}
g=\frac{1+r\gz}{r}g_{N_n} +\frac {d\gz^2}
{\Theta (\gz)}+\Theta (\gz)\theta^2,\quad
\omega = \frac{1+r\gz}{r}\omega_{N_n}+ d\gz \wedge
\theta,
\end{equation}
with admissible K\"ahler class
$$
\begin{array}{ccl}
\Omega_{\bfr}/2\pi & = & \frac{n}{r}\left(\frac{n_1}{n} x_1+\frac{n_2}{n} x_2 \right)+ x_3+y_3  \\
\\
& = & \frac{n(1+r)}{r} \left(\frac{n_1}{n} x_1+\frac{n_2}{n} x_2 \right) +2x_3\\
\\
&=&  \frac{n(1-r)}{r} \left(\frac{n_1}{n} x_1+\frac{n_2}{n} x_2 \right) +2y_3,
\end{array}
$$
where $g_{N_n} = n_1g_1+n_2g_2$.
As such, we can view this metric as an admissible metric on $S_\bfn=\bbp(\BOne \oplus L_n)\ra{1.8} N$, where $N=\bbc\bbp^1\times\bbc\bbp^1$
and 
\begin{itemize}
\item $(\pm \gro_{N_n},\pm g_{N_n})$ is a K\"ahler metric with constant scalar curvature $\pm 4s=\pm 4(\frac{1}{n_1} + \frac{1}{n_2})$. [The $\pm$ still means that either $+g_{N_n}$ or $-g_{N_n}$ is positive definite.]
\item $L_n\ra{1.8} N$ is the holomorphic line bundle over $N=\bbc\bbp^1\times\bbc\bbp^1$ which satisfy
$$c_1(L_n)=\frac{1}{2\pi}[\gro_{N_n}]=n \left(\frac{n_1}{n} x_1+\frac{n_2}{n} x_2 \right).$$ 
\end{itemize}
This special case, is exactly of the type considered in e.g. Section 5 of \cite{BoTo14a} or more generally (including cyclic orbifolds $N$) in Section 2.5 of \cite{BoTo19a}.
Note that the base metric $(\pm \gro_{N_n},\pm g_{N_n})$ is only K\"ahler-Einstein if also $n_1=n_2$.

Now consider the natural Boothby-Wang constructed Sasaki structure on the $S^1$-bundle $M \rightarrow N$ defined by the primitive
$[\omega_N]=\left(\frac{n_1}{n} x_1+\frac{n_2}{n} x_2 \right)$.
Following Proposition 4.22 of \cite{BHLT16} (with the amendment of Lemma 2.12 and Corollary 2.14 of \cite{BoTo19a}) we then realize the following connection with $S^3_\bfw$-joins.
\begin{proposition}\label{goingback}
For $n_1n_2>0$, $r_1=r_2=r$ rational, $n=(\text{sign of}\,n_i)\gcd(|n_1|,|n_2|)$, and $\gcd(m_1,m_2,|n|)=1$, there is a choice of co-prime $w_0$, $w_\infty \in \bbz^+$ and co-prime $l_0, l_\infty$ such that, when we form the $S^3_\bfw$-join $M_{l_0,l_\infty,\bfw} := M \star_{l_0,l_\infty} S^3_{w_0,w_\infty} = M \star_{\bfl} S^3_{\bfw}$, the quasi-regular quotient of $M_{l_0,l_\infty,\bfw}$ by the flow of the Reeb vector field
$\xi_\bfv$ determined by $(v_0,v_\infty)$ in the $\bfw$-cone (where $m=\gcd(m_0,m_\infty)$ and $\bfm= (m_0,m_\infty)=m\bfv=m(v_0,v_\infty)$), is the log pair
$(S_\bfn,\Delta_\bfm)$ with induced (transverse) K\"ahler class $m\gcd(\gs, w_0 v_\infty )  [\omega_{\bfn,\bfm}]$ where 
$\gs=\gcd(l_\infty, |w_0v_\infty-w_\infty v_0|)$ and
$[\omega_{\bfn,\bfm}]$ is a orbifold primitive class that is an appropriate rescale of the admissible class
$\Omega_\bfr$ on $(S_\bfn,\Delta_\bfm)$. The join is smooth if and only if $\gcd(w_0,l_\infty)=\gcd(w_\infty, l_\infty)=1$. In particular, the join is smooth if $m_0=m_\infty=1$.
\end{proposition}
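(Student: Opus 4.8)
The plan is to prove Proposition~\ref{goingback} by \emph{inverting} the correspondence between $S^3_\bfw$-joins and Bott orbifolds of \cite{BoTo20a}: from the prescribed log pair $(S_\bfn,\grD_\bfm)$ and its diagonally admissible polarization I will read off the join parameters $(l_0,l_\infty,w_0,w_\infty)$ and the Reeb field $\xi_\bfv$, and then verify that the quasi-regular quotient of the resulting join reproduces exactly these data. First I would recall, from Proposition~4.22 of \cite{BHLT16} together with the amendments of Lemma~2.12 and Corollary~2.14 of \cite{BoTo19a}, the explicit description of that quotient. Because $M$ is the Boothby-Wang bundle over $(N,[\omega_N])$ with $N=\bbc\bbp^1\times\bbc\bbp^1$ and $[\omega_N]=\frac{n_1}{n}x_1+\frac{n_2}{n}x_2$ primitive, the quasi-regular quotient of $M\star_{\bfl}S^3_\bfw$ by the flow of $\xi_\bfv$ is a projective bundle $\bbp(\BOne\oplus L)\ra{1.6}N$ carrying an admissible K\"ahler structure, in which the twist of $L$, the ramification divisor, and the induced transverse class are explicit functions of $(l_0,l_\infty,w_0,w_\infty,v_0,v_\infty)$. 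The problem thus reduces to matching three quantities: the bundle twist, the ramification indices $(m_0,m_\infty)$, and the admissible parameter $r=r_1=r_2$.

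Second I would solve the matching equations. The ramification is forced: writing $\bfm=m\bfv$ with $m=\gcd(m_0,m_\infty)$ identifies the primitive Reeb direction $\bfv=(v_0,v_\infty)=(m_0/m,m_\infty/m)$, so $\xi_\bfv$ is pinned down by $\grD_\bfm$. The twist determines $l_\infty$, since $L=L_n$ with $c_1(L_n)=n[\omega_N]=n_1x_1+n_2x_2$ requires the degree-carrying parameter $l_\infty$ to reproduce $n$ (its sign absorbed by the choice of zero versus infinity section, which is legitimate since $n_1n_2>0$). Finally, rationality of $r$ lets me solve the boundary relation in \eqref{positivityThetaorbifold} for a pair $w_0,w_\infty$, which I may take coprime after clearing common factors, and I then choose $l_0$ coprime to $l_\infty$. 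The hypothesis $\gcd(m_0,m_\infty,|n|)=1$ is precisely what allows these two coprimality conditions to hold simultaneously with the forced values of $\bfv$ and $l_\infty$.

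Third -- and this is where I expect the genuine work -- I would compute the induced transverse K\"ahler class and identify it with $m\gcd(\grs,w_0v_\infty)\,[\omega_{\bfn,\bfm}]$, where $\grs=\gcd(l_\infty,|w_0v_\infty-w_\infty v_0|)$. The join produces a natural transverse class that is a priori only some positive integer multiple of the primitive orbifold generator of $H^2_{orb}((S_\bfn,\grD_\bfm),\bbz)$, and the whole content of the statement is the exact value of that multiple. The main obstacle is the attendant divisibility bookkeeping: one must pin down the primitive generator $[\omega_{\bfn,\bfm}]$ (the appropriate rescale of $\Omega_\bfr$) and track how the factors $l_\infty$, $w_0v_\infty$ and $|w_0v_\infty-w_\infty v_0|$ combine into the index, exactly in the spirit of the computations in Section~4 of \cite{BoTo14a}. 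One convenient route is to pass to the smooth cover as in \eqref{commdiag2} and compare primitive classes upstairs and downstairs.

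Finally, smoothness is immediate from the criterion already recorded above: $M\star_\bfl S^3_\bfw$ is smooth if and only if $\gcd(l_\infty,w_0)=\gcd(l_\infty,w_\infty)=1$, equivalently $\gcd(l_\infty,w_0w_\infty)=1$. When $m_0=m_\infty=1$ one has $m=1$ and $\bfv=(1,1)$, and the parameter choices above then return $w_0,w_\infty$ coprime to $l_\infty$, so the join is automatically smooth, as asserted.
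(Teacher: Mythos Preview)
Your overall strategy---invert the join--to--Bott-orbifold correspondence via Proposition~4.22 of \cite{BHLT16} together with Lemma~2.12 and Corollary~2.14 of \cite{BoTo19a}---is exactly the paper's approach. But two steps in your execution need tightening.

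First, the parameter determination is out of order. You say the twist determines $l_\infty$, then $r$ determines $\bfw$, and then you ``choose $l_0$ coprime to $l_\infty$.'' In fact $l_0$ is not a free choice: the twist relation is $l_\infty n = l_0(w_0 m_\infty - w_\infty m_0)$, which ties $l_0,l_\infty,\bfw,\bfm$ together. The paper's order is to first solve $r=\dfrac{w_0 m_\infty - w_\infty m_0}{w_0 m_\infty + w_\infty m_0}$ for the unique coprime positive pair $(w_0,w_\infty)$, and only then solve $l_\infty n = l_0(w_0 m_\infty - w_\infty m_0)$ for the unique coprime positive pair $(l_0,l_\infty)$. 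Writing these two equations explicitly is what makes the role of the hypothesis $\gcd(m_0,m_\infty,|n|)=1$ transparent and also underlies the smoothness claim when $m_0=m_\infty=1$.

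Second, you overestimate the work needed for the transverse K\"ahler class. There is no need to pass to the smooth cover as in \eqref{commdiag2}; the amendments you already cite give the class directly as $m\gcd(\gs\Upsilon_N, w_0 v_\infty l_0)\,[\omega_{\bfn,\bfm}]$, and the simplification to $m\gcd(\gs,w_0v_\infty)\,[\omega_{\bfn,\bfm}]$ follows immediately from $\Upsilon_N=1$ (the base $N=\bbc\bbp^1\times\bbc\bbp^1$ is smooth) and $\gcd(\gs,l_0)=1$ (since $\gs\mid l_\infty$ and $\gcd(l_0,l_\infty)=1$). The ``divisibility bookkeeping'' you anticipate is already packaged in those two cited results.
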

\begin{proof}
We follow the proof of Proposition 4.22 in \cite{BHLT16} (including the paragraphs leading up to the proposition),
to identify the join by first picking $\bfw = (w_0,w_\infty)$ to be the unique positive, integer, and co-prime solution of
$$r=\frac{w_0 m_\infty - w_\infty m_0}{w_0 m_\infty +w_\infty m_0}$$
and then, as the next step, picking the pair $(l_0,l_\infty)$ as the unique, positive integers, and co-prime solution of
$$l_\infty n= l_0(w_0m_\infty - w_\infty m_0).$$

Proposition 4.22 in \cite{BHLT16} was slightly misleading in implying that the quasi-regular quotient of $\xi_\bfv$ would always produce a primitive orbifold K\"ahler class, but, with Lemma 2.12 and Corollary 2.14 of \cite{BoTo19a} in hand, we can say that the transverse K\"ahler class is $m\gcd(\gs\Upsilon_N, w_0 v_\infty l_0) [\omega_{\bfn,\bfm}]$ where $[\omega_{\bfn,\bfm}]$ is a orbifold primitive class that is an appropriate rescale of the admissible class
$\Omega_\bfr$ on $(S_\bfn,\Delta_\bfm)$ and $\Upsilon_N$ is the orbifold order of $N$. Since $\Upsilon_N=1$ and $\gcd(\gs,l_0)=1$ (recall $\gcd(l_0,l_\infty)=1$) we get the desired K\"ahler class.

The rest of the claims follow straight from the proof Proposition 4.22 in \cite{BHLT16}. In particular, note that $m=l_\infty/\gs$, as it should be according to Theorem 3.8
in \cite{BoTo14a}.
\end{proof}

\begin{remark}
Note that when the join is smooth, it is easy to see that $\gcd(\gs, w_0 v_\infty)=1$ and so the transverse K\"ahler class is $m [\omega_{\bfn,\bfm}]$. In particular, if
$m_0=m_\infty=1$, we do have a primitive (and admissible) transverse K\"ahler class.
\end{remark}

Combining Theorem 2.7 and Lemma 2.12 from \cite{BoTo19a} for the``only if " and Proposition \ref{goingback} for the ``if'',  we arrive at the following theorem.
\begin{theorem}\label{whenS3join}
Assume $\gcd(m_1,m_2,|n_1|,|n_2|)=1$ and 
$[\omega_{\bfn,\bfm}]$ is a orbifold primitive class on $(S_\bfn,\Delta_\bfm)$.
There exist a constant $k\in \bbz^+$ such that the polarized orbifold $(S_\bfn,\Delta_\bfm, k[\omega_{\bfn,\bfm}])$ is the quotient with respect to (the canonical Reeb vector field in) a quasi-regular ray in the $\bfw$-cone of a (possibly non-smooth) join of the form $M \star_{\bfl} S^3_{\bfw}$, where $M$ is a Boothby-Wang constructed Sasaki manifold over $\bbc\bbp^1\times\bbc\bbp^1$, if and only if $n_1n_2>0$ and $[\omega_{\bfn,\bfm}]$ is a diagonally admissible K\"ahler class.
\end{theorem}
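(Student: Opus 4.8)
The statement is a biconditional, and the two directions draw on different inputs: the forward (``only if'') direction on the structural classification of join-quotients in \cite{BoTo19a}, and the reverse (``if'') direction on Proposition \ref{goingback}. The plan is to treat them separately, after which the equivalence follows by combining them.

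For the ``only if'' direction, I would assume that $(S_\bfn,\Delta_\bfm,k[\omega_{\bfn,\bfm}])$ is the quasi-regular quotient, by the canonical Reeb field of a quasi-regular ray in the $\bfw$-cone, of some join $M\star_\bfl S^3_\bfw$ with $M$ a Boothby-Wang manifold over $\bbc\bbp^1\times\bbc\bbp^1$. The key input is Theorem 2.7 of \cite{BoTo19a}, which identifies precisely which polarized admissible orbifolds arise this way: they are exactly those admissible over $N=\bbc\bbp^1\times\bbc\bbp^1$ in the narrower sense of Section 2.5.1 of \cite{BoTo19a}, where $N$ carries the single rescaled class $[\omega_N]=\frac{n_1}{n}x_1+\frac{n_2}{n}x_2$. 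By the discussion preceding the theorem, that narrower admissibility is equivalent to diagonal admissibility, i.e. to $r_1=r_2$. Since the sign of $r_i$ equals the sign of $n_i$, the equality $r_1=r_2$ forces $n_1$ and $n_2$ to share a sign, so $n_1n_2>0$; and $r_1=r_2$ is, by definition, diagonal admissibility. Lemma 2.12 of \cite{BoTo19a} is then invoked to confirm that the transverse K\"ahler class is the predicted positive-integer multiple of the primitive class $[\omega_{\bfn,\bfm}]$, matching the polarization $k[\omega_{\bfn,\bfm}]$.

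For the ``if'' direction, I would assume $n_1n_2>0$ and that $[\omega_{\bfn,\bfm}]$ is diagonally admissible, so $r_1=r_2=r$. First I would check that $r$ is rational: since $[\omega_{\bfn,\bfm}]$ is an integral orbifold class, the explicit admissible-class formula $\Omega_\bfr/2\pi=\frac{n_1(1+r)}{r}x_1+\frac{n_2(1+r)}{r}x_2+2x_3$ forces the ratio of the $x_1$- to the $x_3$-coefficient to be rational, and since $n_1\in\bbz^*$ this yields $r\in\bbq$. Next I would verify the arithmetic hypothesis of Proposition \ref{goingback}: the standing assumption $\gcd(m_1,m_2,|n_1|,|n_2|)=1$ implies $\gcd(m_1,m_2,|n|)=1$ with $n=(\text{sign of }n_i)\gcd(|n_1|,|n_2|)$, because any common divisor of $m_1,m_2,|n|$ also divides $|n_1|$ and $|n_2|$ and hence divides $1$. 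Proposition \ref{goingback} then produces coprime $\bfw=(w_0,w_\infty)$ and coprime $\bfl=(l_0,l_\infty)$ and a Reeb field $\xi_\bfv$ in the $\bfw$-cone whose quasi-regular quotient is $(S_\bfn,\Delta_\bfm)$ with transverse K\"ahler class $m\gcd(\gs,w_0v_\infty)[\omega_{\bfn,\bfm}]$. Taking $k=m\gcd(\gs,w_0v_\infty)\in\bbz^+$ gives exactly the desired realization.

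The main obstacle is the bookkeeping in the ``only if'' direction, namely extracting from Theorem 2.7 of \cite{BoTo19a} the precise chain ``join-quotient $\Longleftrightarrow$ narrow admissibility $\Longleftrightarrow$ $r_1=r_2$'' and then matching the polarizations: the primitive class $[\omega_{\bfn,\bfm}]$ of this paper and the transverse class produced by the join construction differ by the factor $k$, and keeping this factor consistent with the ramification data $\bfm=(m_0,m_\infty)$ is exactly what Lemma 2.12 of \cite{BoTo19a} is needed for. Once the sign dichotomy (sign of $r_i$ equals sign of $n_i$) is in hand, the content of both directions is essentially a translation between the two descriptions, so the difficulty is organizational rather than conceptual.
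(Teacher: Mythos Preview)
Your proposal is correct and follows essentially the same approach as the paper: the paper's proof is the single sentence ``Combining Theorem 2.7 and Lemma 2.12 from \cite{BoTo19a} for the `only if' and Proposition \ref{goingback} for the `if','' and you have unpacked exactly these citations. Your added verifications---that the primitivity of $[\omega_{\bfn,\bfm}]$ forces $r\in\bbq$, and that $\gcd(m_0,m_\infty,|n_1|,|n_2|)=1$ implies $\gcd(m_0,m_\infty,|n|)=1$---fill in hypotheses of Proposition \ref{goingback} that the paper leaves implicit, so your argument is in fact more complete than the original.
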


Suppose now that $n_1n_2>0$, we are in the log Fano case as in Proposition \ref{orbFano}, and the polarization is chosen such that $c_1^{orb}(S_\bfn,\grD_\bfm) =\cali_{\bfn,\bfm} [\omega_{\bfn,\bfm}]$.
Since $c_1^{orb}(S_\bfn,\grD_\bfm) = \Bigl(2-\frac{n_1}{m_\infty}\Bigr)x_1 +\Bigl(2-\frac{n_2}{m_\infty}\Bigr)x_2+\Bigl(\frac{1}{m_0}+\frac{1}{m_\infty}\Bigr)y_3$
and a diagonally admissible K\"ahler class is a rescale of $\Omega_\bfr = \frac{(1-r)}{r} \left(n_1 x_1+n_2 x_2 \right) +2y_3$ for some $0<|r|<1$ with $rn_i>0$, we see that $[\omega_{\bfn,\bfm}]=c_1^{orb}(S_\bfn,\grD_\bfm) /\cali_{\bfn,\bfm}$ is diagonally admissible if and only if 
$$
2-\frac{n_1}{m_\infty}=\left(\frac{1}{m_0}+\frac{1}{m_\infty}\right) \frac{n_1(1-r)}{2r} \quad \text{and}\quad
2-\frac{n_2}{m_\infty}=\left(\frac{1}{m_0}+\frac{1}{m_\infty}\right) \frac{n_2(1-r)}{2r},
$$
i.e.,
$$
n_1\left(\frac{1}{m_\infty}+\left(\frac{1}{m_0}+\frac{1}{m_\infty}\right) \frac{(1-r)}{2r} \right)=
2=n_2\left(\frac{1}{m_\infty}+\left(\frac{1}{m_0}+\frac{1}{m_\infty}\right) \frac{(1-r)}{2r}\right).
$$
This clearly implies that we must have $n_1=n_2$. On the other hand, if $n_1=n_2$ (and still assuming log Fano) we can solve for an appropriate $r$. In conclusion,
\begin{corollary}
In the log Fano case, there exists a constant $k\in \bbz^+$ such that the polarized orbifold $(S_\bfn,\Delta_\bfm, kc_1^{orb}(S_\bfn,\grD_\bfm) )$ is the quotient with respect to a quasi-regular ray in the $\bfw$-cone of a (possibly non-smooth) join of the form $M \star_{\bfl} S^3_{\bfw}$, where $M$ is a Boothby-Wang constructed Sasaki manifold over $\bbc\bbp^1\times\bbc\bbp^1$, if and only if $n_1=n_2$.
\end{corollary}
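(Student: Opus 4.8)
The plan is to obtain this directly from Theorem~\ref{whenS3join} by specializing its polarization to the monotone one, $[\omega_{\bfn,\bfm}]=c_1^{orb}(S_\bfn,\grD_\bfm)/\cali_{\bfn,\bfm}$. That theorem already asserts that a presentation of $(S_\bfn,\grD_\bfm,k[\omega_{\bfn,\bfm}])$ as the quasi-regular quotient of a join $M\star_\bfl S^3_\bfw$, for some $k\in\bbz^+$, exists if and only if $n_1n_2>0$ and the chosen class is diagonally admissible. So the whole task reduces to characterizing, for the monotone class alone, when diagonal admissibility holds, and to showing that this happens exactly when $n_1=n_2$.

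For the ``only if'' direction I would match the explicit expression for $c_1^{orb}(S_\bfn,\grD_\bfm)$ against the shape $\Omega_\bfr=\frac{(1-r)}{r}(n_1x_1+n_2x_2)+2y_3$ of a diagonally admissible class. Requiring the monotone class to be a rescale of some $\Omega_\bfr$ amounts to proportionality of the two coefficient vectors in the $\{x_1,x_2,y_3\}$ basis, and reading this off produces the two displayed equations. Both have the common form $2=n_iA$ with the \emph{same} quantity $A=\tfrac{1}{m_\infty}+\bigl(\tfrac{1}{m_0}+\tfrac{1}{m_\infty}\bigr)\tfrac{1-r}{2r}$; since $A=2/n_i\neq0$, subtracting the two relations forces $n_1=n_2$. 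This direction is purely algebraic and uses no Fano input.

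For the ``if'' direction, assume $n_1=n_2=n$. Then $n_1n_2=n^2>0$ holds automatically (recall $n_i\neq0$), and the two equations collapse to the single relation $2=nA$, which I would solve explicitly for $r$, obtaining $r=1/(1+2B)$ with $B=(2/n-1/m_\infty)/(1/m_0+1/m_\infty)$. The one genuine point to verify is that this $r$ lands in the admissible window $0<|r|<1$ with $\operatorname{sign}(r)=\operatorname{sign}(n)$, and this is exactly where the log Fano hypothesis enters. I would split on the sign of $n$: for $n>0$ the requirement $0<r<1$ reduces to $B>0$, equivalently $n/m_\infty<2$; for $n<0$ the requirement $-1<r<0$ reduces to $B<-1$, equivalently $-n/m_0<2$. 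In each case the relevant inequality is precisely one of the log Fano inequalities of Proposition~\ref{orbFano}, so $r$ is admissible. With $r$ in range the monotone class is diagonally admissible, and Theorem~\ref{whenS3join} then supplies the join.

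The main obstacle is this admissible-range check: all the structural content is carried by Theorem~\ref{whenS3join}, and the only thing that can fail is $r$ escaping $(0,1)$ or $(-1,0)$. The point I would emphasize is that the two log Fano inequalities $n/m_\infty<2$ and $-n/m_0<2$ are exactly the boundary conditions needed to trap $r$ in its admissible range, matched one-to-one with the sign of $n$.
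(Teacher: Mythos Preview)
Your proposal is correct and follows essentially the same approach as the paper: both reduce to Theorem~\ref{whenS3join}, match the monotone class against the diagonal form $\Omega_\bfr=\frac{1-r}{r}(n_1x_1+n_2x_2)+2y_3$, and obtain the pair of equations $n_iA=2$ forcing $n_1=n_2$. Your write-up is in fact more complete on the converse, where the paper simply asserts ``we can solve for an appropriate $r$'' without verifying the admissible range, while you carry out the sign-split and identify exactly which log Fano inequality traps $r$ in $(0,1)$ or $(-1,0)$.
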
 

Returning to Proposition \ref{goingback} and Theorem \ref{whenS3join}, we note that in the case where $(S_\bfn,\Delta_\bfm, [\omega_{\bfn,\bfm}])$ is the quotient with respect to a quasi-regular ray in the $\bfw$-cone of a join of the form $M \star_{\bfl} S^3_{\bfw}$  (so assuming $k=1$ in Theorem \ref{whenS3join}, i.e., assuming the transverse K\"ahler class is primitive) we must have that $m=1$ and $\gcd(\gs, w_0 v_\infty )=1$. The first equality implies that $l_\infty=\gs=\gcd(l_\infty, |w_0v_\infty-w_\infty v_0|)$ and hence
$l_\infty$ is a factor of $|w_0v_\infty-w_\infty v_0|$. Since we also have that $\gcd(l_\infty, w_0 v_\infty )=1$, we must have $\gcd(l_\infty, w_\infty v_0 )=1$ and in particular  
$\gcd(w_0,l_\infty)=\gcd(w_\infty, l_\infty)=1$. This means that $M \star_{\bfl} S^3_{\bfw}$ is smooth. We therefore have the following companion to Theorem \ref{whenS3join}.

\begin{proposition}\label{whenS3join2}
Consider the polarized orbifold $(S_\bfn,\Delta_\bfm, [\omega_{\bfn,\bfm}])$, where $[\omega_{\bfn,\bfm}]$ is a primitive integer orbifold K\"ahler class and
$\gcd(m_0,m_\infty,|n_1|,|n_2|)=1$.

If $(S_\bfn,\Delta_\bfm, [\omega_{\bfn,\bfm}])$ is the quotient with respect to (the canonical Reeb vector field in) a quasi-regular ray in the $\bfw$-cone of a join of the form $M \star_{\bfl} S^3_{\bfw}$, then this join is smooth. Moreover, in this case $\gcd(m_0,m_\infty)=1$.
\end{proposition}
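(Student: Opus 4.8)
The plan is to unwind everything directly from Proposition \ref{goingback}, whose proof already records the two facts that drive the argument: the exact integer multiplier appearing on the transverse K\"ahler class, and the identity $m = l_\infty/\gs$. First I would invoke Proposition \ref{goingback} in the present situation: since $(S_\bfn,\Delta_\bfm,[\omega_{\bfn,\bfm}])$ is by hypothesis the quasi-regular quotient, by the canonical Reeb field $\xi_\bfv$, of a join $M\star_\bfl S^3_\bfw$, that proposition furnishes coprime $\bfw=(w_0,w_\infty)$ and coprime $\bfl=(l_0,l_\infty)$ for which the induced transverse K\"ahler class equals $m\gcd(\gs,w_0v_\infty)\,[\omega_{\bfn,\bfm}]$, where $\gs=\gcd(l_\infty,|w_0v_\infty-w_\infty v_0|)$ and $m=\gcd(m_0,m_\infty)$. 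Because $[\omega_{\bfn,\bfm}]$ is assumed primitive, the transverse class is itself primitive (this is the case $k=1$ of Theorem \ref{whenS3join}), so the positive integer multiplier must be $1$, i.e. $m\gcd(\gs,w_0v_\infty)=1$. As both factors are positive integers this forces $m=1$ and $\gcd(\gs,w_0v_\infty)=1$. The first of these is already the ``moreover'' assertion $\gcd(m_0,m_\infty)=1$.

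Next I would feed $m=1$ into the relation $m=l_\infty/\gs$ recorded at the end of the proof of Proposition \ref{goingback} (consistent with Theorem 3.8 of \cite{BoTo14a}), obtaining $l_\infty=\gs=\gcd(l_\infty,|w_0v_\infty-w_\infty v_0|)$; hence $l_\infty$ divides $w_0v_\infty-w_\infty v_0$. Combining this divisibility with $\gcd(l_\infty,w_0v_\infty)=1$ (valid since $\gs=l_\infty$) and reducing modulo $l_\infty$ gives $w_\infty v_0\equiv w_0v_\infty\pmod{l_\infty}$, whence $\gcd(l_\infty,w_\infty v_0)=\gcd(l_\infty,w_0v_\infty)=1$. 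Distributing these two coprimality statements over the products yields $\gcd(l_\infty,w_0)=\gcd(l_\infty,w_\infty)=1$, which is precisely the smoothness criterion of Proposition \ref{goingback}. Therefore the join $M\star_\bfl S^3_\bfw$ is smooth, completing the proof.

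The argument is essentially elementary number theory once the setup is in place, so the only genuine obstacle is importing the correct quantitative input from the (rather involved) proof of Proposition \ref{goingback}, which itself rests on Proposition 4.22 of \cite{BHLT16} together with Lemma 2.12 and Corollary 2.14 of \cite{BoTo19a}: one must use both the precise multiplier $m\gcd(\gs,w_0v_\infty)$ and the identity $m=l_\infty/\gs$. The one point of care in the final manipulation is to distribute the coprimality of a product, $\gcd(l_\infty,w_0v_\infty)=1$ and $\gcd(l_\infty,w_\infty v_0)=1$, down to its individual factors $w_0$ and $w_\infty$.
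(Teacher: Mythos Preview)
Your proposal is correct and follows essentially the same route as the paper: from primitivity force the multiplier $m\gcd(\gs,w_0v_\infty)=1$, deduce $m=1$ and $l_\infty=\gs$ via $m=l_\infty/\gs$, then use $l_\infty\mid(w_0v_\infty-w_\infty v_0)$ together with $\gcd(l_\infty,w_0v_\infty)=1$ to obtain $\gcd(l_\infty,w_\infty v_0)=1$ and hence the smoothness criterion $\gcd(l_\infty,w_0)=\gcd(l_\infty,w_\infty)=1$. The paper's argument is exactly this sequence of observations, stated in the paragraph immediately preceding the proposition.
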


\subsection{Connection with Yamazaki's Fiber joins}\label{yam}
In \cite{Yam99} T. Yamazaki introduced the fiber join for $K$-contact structures and in \cite{BoTo18b} this was extended to Sasaki structures.
In the smooth case ($m_0=m_\infty=1$) of KS orbifolds, it follows from Section 5.3 of \cite{BoTo18b} that 
$S_\bfn$ polarized by a primitive K\"ahler class, which in turn is an appropriate rescale of $\Omega_\bfr$, is
the quotient of the regular ray in the $\gt^+_{sphr}$ cone of a Yamazaki fiber join over $\bbc\bbp^1\times\bbc\bbp^1$, if an only if
there exist $k_1^1, k_1^2, k_2^1, k_2^2 \in \bbz^+$ such that
\begin{eqnarray}\label{yamequation}
k_1^1-k_2^1& = & n_1\qquad k_1^2-k_2^2=n_2  \notag \\
\frac{k_1^1-k_2^1}{k_1^1+k_2^1}&=&r_1 \qquad \frac{k_1^2-k_2^2}{k_1^2+k_2^2} = r_2.
\end{eqnarray}
In that case, the corresponding Yamazaki fiber join is given as follows:\footnote{We refer to \cite{BoTo18b} for details and a more general description of the Yamazaki fiber joins.} 

Let $\pi_j: \bbc\bbp^1\times\bbc\bbp^1 \rightarrow \bbc\bbp^1$ denote the natural projection to the $j^{t}$ factor of the product $\bbc\bbp^1\times\bbc\bbp^1 \rightarrow \bbc\bbp^1$
and let $\calk$ denote the canonical bundle on $\bbc\bbp^1$.
Let $L_i$ be a holomorphic line bundle over $\bbc\bbp^1\times\bbc\bbp^1 \rightarrow \bbc\bbp^1$ given by
$L_i=k^1_i \pi_1^*\calk^{\frac{-1}{2}}+k^2_i\pi_2^*\calk^{\frac{-1}{2}}$, for $k^j_i \in \bbz^+$. The choice of $L_1$ and $L_2$ can be given by the matrix $K=\begin{pmatrix} k^1_1&k^2_1\\
k^1_2&k^2_2 \end{pmatrix}$. 
Note that $c_1(L_i)$ are both in the K\"ahler cone of $\bbc\bbp^1\times\bbc\bbp^1$ and hence $L_i$ are positive line bundles over $\bbc\bbp^1\times\bbc\bbp^1$.
Each $c_1(L_i)$ also defines a principal $S^1$-bundle over $\bbc\bbp^1\times\bbc\bbp^1$, $M_i \rightarrow \bbc\bbp^1\times\bbc\bbp^1$, and we identify $L_i$ with $M_i \times_{S^1} \bbc$. Then $M_i \stackrel{\pi}{\rightarrow} \bbc\bbp^1\times\bbc\bbp^1$ has a natural Sasaki structure defined by the Boothby-Wang construction. 
Consider now $L_1^*\oplus L_2^*$ and equip each $L_i^*$ with a Hermitian metric giving us a norm $d_i: L_i^* \to \bbr^{\geq 0}$.
Then, the fiber join, $M_K=M_1\star_f M_2$ is defined as the $S^3$-bundle over $S$ whose fibers are given by $d_1^2+d_2^2=1$.
Now $M_K$ has a natural $CR$-structure $(\cald, J)$ with a family, $\gt^+_{sphr}$, of compatible Sasaki structures $\cals_\bfa = (\xi_\bfa, \eta_\bfa, \Phi_\bfa, g_\bfa)$, 
where $\bfa = (a_1,a_2) \in (\bbr^+)^2$ and $(a_1,a_2)=(1,1)$ corresponds to the regular Sasaki structure in $\gt^+_{sph}(\cald, J)$. Note that $\gt^+_{sph}$ is a proper subcone of the unreduced Sasaki cone $\gt^+$ of $(M_K,\cald, J)$.

\begin{remark}
In \cite{BoTo18b} we did not consider the more general question  of determining the quotients of quasi-regular Sasaki structures in $\gt^+_{sph}$ and the transverse K\"ahler class. We conjecture that those would indeed be certain polarized KS orbifolds and will explore this in future studies.
\end{remark}

\subsection{Ricci Solitons and K\"ahler-Einstein}\label{KEsection}
It is well-known that there exists a unique K\"ahler-Ricci soliton on any toric compact Fano complex orbifold
\cite{Don08,TiZh00,ShZh12, WaZh04,Zhu00}. The existence proofs by Wang-Zhu \cite{WaZh04}, Shi-Zhu \cite{ShZh12} and Donaldson \cite{Don08} all use a continuity method and thus do not provide an explicit expression of the K\"ahler-Ricci soliton. It is therefore interesting to explore cases where explicit descriptions of K\"ahler-Ricci solitons are possible. Explicit examples can be found in e.g.\cite{DaWa11,LeTo13}.

Here we want to explore the Ricci Soliton and more specifically the K\"ahler-Einstein equations 
under the constraint of the orbifold endpoint conditions \eqref{positivityForbifold}. This will yield explicit examples 
on Fano $(S_\bfn,\Delta_\bfm)$ and is
essentially a mild orbifold extension of the work by Koiso and Sakane \cite{KoSa86,Koi90}. We will follow in their footsteps using the notation of Section 3 of \cite{ACGT08b}.

The admissible metrics are Ricci solitons with $V= (\frac{c}{2})\,grad_g \gz$ if and only if
\begin{equation}\label{Ricci-Soliton}
\rho - \lambda \omega = \call_V\omega,
\end{equation}
where the Ricci form is
$$\rho = s_1 \omega_1 + s_2 \omega_2 - \frac{1}{2}dd^c \log F =  \left(s_1-\frac{1}{2} \frac{F'(\gz)}{p_c(\gz)}\right) \omega_1 + \left(s_2-\frac{1}{2} \frac{F'(\gz)}{p_c(\gz)}\right) \omega_2 - \frac{1}{2} \left( \frac{F'}{p_c}\right)' (\gz)\,d\gz\wedge \theta $$
and $\lambda, c\in \bbr$. Obviously, $c=0$ corresponds to K\"ahler-Einstein metrics.
Since $\call_V\omega =  (\frac{c}{2})\,dd^c\gz$ and $s_i=2/n_i$, equation \eqref{Ricci-Soliton} becomes a pair of ODEs
\begin{equation}\label{rs}
\begin{array}{ccl}
\frac{F'(\gz)}{p_c(\gz)} + c\frac{F(\gz)}{p_c(\gz)} & = & 4/n_1-2\lambda(\gz + \frac{1}{r_1})\\
\\
\frac{F'(\gz)}{p_c(\gz)} + c\frac{F(\gz)}{p_c(\gz)} & = & 4/n_2-2\lambda(\gz + \frac{1}{r_2}),\\
\end{array}
\end{equation}
Note that \eqref{rs} implies that $n_1=n_2$ if and only if $r_1=r_2$. 

Note that ({\it ii}) and ({\it iii}) of \eqref{positivityForbifold} together with \eqref{rs} implies the necessary conditions
\begin{equation}\label{lambda}
2\lambda = \frac{1}{m_0} + \frac{1}{m_\infty}
\end{equation}
and
\begin{equation}\label{rscondition}
r_1 = \frac{\frac{1}{m_0}+\frac{1}{m_\infty}}{\frac{4}{n_1}+\frac{1}{m_0}-\frac{1}{m_\infty}},\quad \quad r_2 = \frac{\frac{1}{m_0}+\frac{1}{m_\infty}}{\frac{4}{n_2}+\frac{1}{m_0}-\frac{1}{m_\infty}}.\end{equation}
On the other hand, assuming \eqref{lambda} and \eqref{rscondition}, the ODEs of \eqref{rs} are equivalent to the single ODE
\begin{equation}\label{rs2}
\begin{array}{ccl}
\frac{F'(\gz)}{p_c(\gz)} + c\frac{F(\gz)}{p_c(\gz)} & = & (\frac{1}{m_\infty} - \frac{1}{m_0}) - (\frac{1}{m_0}+\frac{1}{m_\infty})\gz
\end{array}
\end{equation}
Further, \eqref{rs2} and ({\it ii}) of  \eqref{positivityForbifold} together imply ({\it iii}) of \eqref{positivityForbifold}. 
In summary, we will get an admissible Ricci soliton solution as in \eqref{Ricci-Soliton} exactly when \eqref{lambda} and
 \eqref{rscondition} are satisfied and \eqref{rs2} has a solution $F(\gz)$ that satisfies ({\it i}) and ({\it ii}) of \eqref{positivityForbifold}.

Notice that since we require $0<|r_i|<1$ and $r_i n_i>0$, \eqref{rscondition} has an appropriate solution $(r_1,r_2)$ iff
$$\frac{n_1}{m_\infty}<2,\quad \frac{n_2}{m_\infty}<2,\quad -\frac{n_1}{m_0}<2,\quad -\frac{n_2}{m_0}<2.$$
This is exactly the Fano condition in Proposition \ref{orbFano}. In turn, under this condition, \eqref{lambda} and \eqref{rscondition} correspond exactly to
$c_1^{orb}(S_\bfn,\grD_\bfm) = \lambda \Omega_{\bfr}/2\pi$.

Similarly to the smooth case, we shall see that in the Fano case, there is indeed always an admissible Ricci soliton in the appropriate K\"ahler class:

Using the integrating factor method, observe that
\begin{equation} \label{Frssolution}
\begin{array}{ccl}
F(\gz) & = & e^{-c\,\gz}\int_{-1}^{\gz}e^{c\,t}\bigl((\frac{1}{m_\infty} - \frac{1}{m_0}) - (\frac{1}{m_0}+\frac{1}{m_\infty})t\bigr)p_c(t) dt\\
\\
&=& e^{-c\,\gz}\int_{-1}^{\gz}e^{c\,t}(t-t_{0})g(t) dt
\end{array}
\end{equation}
solves \eqref{rs2} and ({\it ii}) of  \eqref{positivityForbifold} iff $G(c)=0$, where
\begin{equation}\label{Gfunction}
G(k)= e^{k\,t_0}\int_{-1}^{1} e^{k\,(t-t_0)}(t-t_{0})g(t) dt,
\end{equation}
$t_{0} = \frac{m_0-m_\infty}{m_0+m_\infty}$, and $g(t) = -(\frac{1}{m_0} + \frac{1}{m_\infty})p_c(t)$. 

Note that $t_0  \in (-1,1)$ and  $g(t)<0$ for $t\in [-1,1]$. Thus $e^{-kt_{0}} G(k)$ is a strictly decreasing function of $k$ tending to
$\mp\infty$ as $k\to\pm\infty$, and hence has a unique zero $c$ (consistent with
the uniqueness of Ricci solitons). 

To check ({\it i}) of \eqref{positivityForbifold} we consider another auxiliary  function 
$$h(\gz) = e^{c\,\gz}F(\gz) = \int_{-1}^{\gz}e^{c\,t}(t-t_{0})g(t) dt.$$
Note that the sign of $h(\gz)$ equals the sign of $F(\gz)$.
In particular, $h(\pm1)=0$, and (due to ({\it iii}) of \eqref{positivityForbifold}) $h$ is positive to the immediate right of $\gz=-1$ and immediate left of $\gz=+1$.
Now, since $h'$ clearly has exactly one zero (namely $t_{0}$) in $(-1,1)$, it
is positive on $(-1,1)$. Therefore ({\it i}) of  \eqref{positivityForbifold} is also satisfied. 

Combining the arguments above with Proposition \ref{orbFano} we conclude the following.
\begin{proposition}
For $(S_\bfn,\Delta_\bfm)$  the following conditions are equivalent:
\begin{itemize}
\item The inequalities $\frac{n_1}{m_\infty}<2,\, \frac{n_2}{m_\infty}<2,\, -\frac{n_1}{m_0}<2,\, -\frac{n_2}{m_0}<2$ are satisfied;
\item $(S_\bfn,\Delta_\bfm)$ is log Fano;
\item There exist a K\"ahler-Ricci soliton on $(S_\bfn,\Delta_\bfm)$.
\end{itemize}
In this case, the K\"ahler-Ricci soliton $(g,\omega)$ is admissible and satisfies
$\rho=\call_V \omega,$
with $\lambda = \frac{1}{2m_0} + \frac{1}{2m_\infty}$ and
$V= (\frac{c}{2})\,grad_g \gz$ for a suitable real constant $c$. 
The K\"ahler-Ricci soliton is K\"ahler-Einstein iff this $c$ is equal to zero.
\end{proposition}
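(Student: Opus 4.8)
The plan is to prove the cycle of equivalences by combining Proposition \ref{orbFano} with the ODE analysis developed above, and then to read off the stated normalizations. The equivalence of the first two bullets is immediate, since Proposition \ref{orbFano} asserts exactly that $(S_\bfn,\grD_\bfm)$ is log Fano precisely when the four inequalities $\tfrac{n_1}{m_\infty}<2$, $\tfrac{n_2}{m_\infty}<2$, $-\tfrac{n_1}{m_0}<2$, $-\tfrac{n_2}{m_0}<2$ hold. It therefore remains to link the existence of a K\"ahler-Ricci soliton to these conditions, in both directions.

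For the direction from the inequalities to existence I would argue constructively, assembling the pieces above. Granting the four inequalities, formula \eqref{rscondition} yields admissible parameters $r_1,r_2$ with $0<|r_i|<1$ and $r_in_i>0$ --- the solvability of \eqref{rscondition} under exactly these inequalities was observed above --- while \eqref{lambda} sets $\lambda=\tfrac{1}{2m_0}+\tfrac{1}{2m_\infty}$; together these say $c_1^{orb}(S_\bfn,\grD_\bfm)=\lambda\,\Omega_\bfr/2\pi$. Under \eqref{lambda} and \eqref{rscondition} the pair \eqref{rs} reduces to the single ODE \eqref{rs2}, and its integrating-factor solution \eqref{Frssolution} meets boundary condition $(ii)$ of \eqref{positivityForbifold} exactly when $G(c)=0$ for the function $G$ of \eqref{Gfunction}. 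Because $g(t)<0$ on $[-1,1]$ and $t_0\in(-1,1)$, the function $e^{-kt_0}G(k)$ is strictly decreasing with image all of $\bbr$, so it has a unique zero $c$; with this $c$ the solution $F$ obeys $(ii)$, and then $(iii)$ is automatic from \eqref{rs2}. Positivity $(i)$ I would verify via the auxiliary $h(\gz)=e^{c\gz}F(\gz)$, whose derivative changes sign only at $t_0$, forcing $h>0$ and hence $F>0$ on $(-1,1)$. The admissible metric built from this $F$ then solves \eqref{Ricci-Soliton} with $V=(\tfrac{c}{2})\,grad_g\,\gz$, giving the desired soliton.

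For the reverse direction I would pass to cohomology. A shrinking K\"ahler-Ricci soliton satisfies $\rho-\lambda\omega=\call_V\omega$ with $\lambda>0$, and $\call_V\omega$ is exact, so taking cohomology classes yields $2\pi\,c_1^{orb}(S_\bfn,\grD_\bfm)=\lambda[\omega]$; since $[\omega]$ is a K\"ahler class and $\lambda>0$, the orbifold first Chern class lies in the K\"ahler cone, i.e.\ $(S_\bfn,\grD_\bfm)$ is log Fano, which by Proposition \ref{orbFano} returns the four inequalities. Uniqueness of K\"ahler-Ricci solitons on toric Fano orbifolds then identifies this soliton with the admissible one constructed above, which fixes $\lambda=\tfrac{1}{2m_0}+\tfrac{1}{2m_\infty}$, $V=(\tfrac{c}{2})\,grad_g\,\gz$ and admissibility. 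The K\"ahler-Einstein criterion is finally transparent: in the ansatz $\call_V\omega=(\tfrac{c}{2})\,dd^c\gz$, so the soliton is K\"ahler-Einstein precisely when $c=0$, and since $c$ is the unique zero of $G$ this is a definite condition on the data.

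The step I expect to be the real obstacle is showing that a single $c$ simultaneously realizes all of \eqref{positivityForbifold}: the zero of $G$ enforces $(ii)$, but one must then confirm that the \emph{same} $c$ makes $F$ strictly positive throughout $(-1,1)$, so that $(i)$ holds and the metric genuinely compactifies with the prescribed orbifold endpoints $(iii)$. The strict monotonicity of $e^{-kt_0}G(k)$ secures existence and uniqueness of $c$, and the lone sign change of $h'$ at $t_0$ secures positivity, but it is the coordination of these two facts that carries the content. A secondary delicate point is the converse: justifying $\lambda>0$ for an abstractly assumed soliton and invoking uniqueness to conclude it is the admissible one, so that mere existence of a K\"ahler-Ricci soliton really does force the explicit Fano inequalities.
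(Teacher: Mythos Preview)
Your proposal is correct and follows essentially the same route as the paper: the proposition is stated as a summary of the ODE analysis preceding it together with Proposition~\ref{orbFano}, and you recapitulate precisely that structure (the equivalence $(1)\Leftrightarrow(2)$ from Proposition~\ref{orbFano}, the constructive $(2)\Rightarrow(3)$ via \eqref{rscondition}, \eqref{rs2}, the unique zero of $G$, and positivity via $h$). The only place you go slightly beyond the paper is in spelling out $(3)\Rightarrow(2)$ by passing to cohomology and invoking uniqueness of toric K\"ahler--Ricci solitons; the paper leaves this implicit in the phrase ``Combining the arguments above with Proposition~\ref{orbFano}'', relying on the well-known fact (cited at the start of the subsection) that compact K\"ahler--Ricci solitons exist only in the Fano case and are unique there.
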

Of course this can also be seen as an orbifold extension of Theorem 3.1 in \cite{ACGT08} (which in turn is essentially due to Koiso \cite{Koi90})
in the case of $S_\bfn= \bbp(\BOne \oplus \calo(n_1,n_2))\ra{1.8} \bbc\bbp^1\times\bbc\bbp^1$.

\begin{remark}
Note that, assuming we are in the Fano case, $c$ in the above proposition is a non-zero multiple of the Futaki invariant, $\calf(V)$, of $c_1^{orb}(S_\bfn,\grD_\bfm)$ applied to $V$.
This underscores the well-known fact that the existence of a K\"ahler-Ricci Soliton with respect to a non-trivial holomorphic vector field is an obstruction to the existence of a K\"ahler-Einstein metric.
\end{remark}

\subsubsection{K\"ahler-Einstein}

Now we turn our attention to K\"ahler-Einstein examples on $(S_\bfn,\Delta_\bfm)$.
From the work above we see that under the assumption that the log Fano condition and \eqref{rscondition} hold, $\Omega_\bfr$ contains an admissible K\"ahler-Einstein metric
with $\rho=(\frac{1}{2m_0} + \frac{1}{2m_\infty})\omega$
if and only if $G$ defined in \eqref{Gfunction} satisfies that $G(0)=0$, i.e., 
\begin{equation}\label{ke2}
\int_{-1}^{1}\bigl((\frac{1}{m_\infty} - \frac{1}{m_0}) - (\frac{1}{m_0}+\frac{1}{m_\infty})t\bigr)(1+r_1 t)(1+r_2 t) dt =0.
\end{equation}

Carrying out the integration in \eqref{ke2} and substituting \eqref{rscondition}, we have the following
\begin{proposition}\label{KEDioPhan}
The Bott orbifold $(S_\bfn,\Delta_\bfm)$ admits a K\"ahler-Einstein metric (which happens to be admissible) iff the following two conditions are satisfied
\begin{enumerate}
\item $(S_\bfn,\Delta_\bfm)$ is log Fano (i.e., $\frac{n_1}{m_\infty}<2,\, \frac{n_2}{m_\infty}<2,\, -\frac{n_1}{m_0}<2,\, -\frac{n_2}{m_0}<2$)
\item $$24(m_0^3 m_\infty^2 - m_0^2 m_\infty^3) -8(n_1+n_2)(m_0^3 m_\infty  -  m_0^2 m_\infty^2  + 
   m_0 m_\infty^3 ) + 
  3 n_1 n_2(m_0^3  -  m_0^2 m_\infty  +  m_0 m_\infty^2  -  m_\infty^3 )=0$$
\end{enumerate}
\end{proposition}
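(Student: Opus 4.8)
The plan is to extract this directly from the analysis immediately preceding the statement, since all the conceptual work is already in place. By the existence result of the previous proposition, in the log Fano case there is a unique admissible K\"ahler--Ricci soliton in $\Omega_\bfr$, with constant $c$ characterized as the unique root of $G$; by the uniqueness of K\"ahler--Ricci solitons on toric Fano orbifolds cited at the start of the section, any K\"ahler--Einstein metric must coincide with this admissible soliton. Since a K\"ahler--Einstein metric here has positive Einstein constant $\lambda = \frac{1}{2m_0}+\frac{1}{2m_\infty}>0$, its existence forces $(S_\bfn,\Delta_\bfm)$ to be log Fano, which is condition (1), and reduces the metric to the admissible one with $c=0$, i.e. $G(0)=0$. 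Conversely, (1) together with $G(0)=0$ produces the admissible K\"ahler--Einstein metric. Thus the whole content is the assertion that $G(0)=0$, written out as the integral \eqref{ke2}, is equivalent after substituting \eqref{rscondition} to the polynomial identity (2).

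First I would evaluate \eqref{ke2}. Writing the integrand as $(A-Bt)(1+r_1t)(1+r_2t)$ with $A=\frac{1}{m_\infty}-\frac{1}{m_0}$ and $B=\frac{1}{m_0}+\frac{1}{m_\infty}$, the odd powers of $t$ integrate to zero over $[-1,1]$, so using $\int_{-1}^1 dt=2$ and $\int_{-1}^1 t^2\,dt=\frac{2}{3}$ the condition $G(0)=0$ collapses to the single relation
$$A\,(3+r_1r_2)=B\,(r_1+r_2).$$
Next I would substitute \eqref{rscondition}. The decisive simplification is that the denominator appearing there is exactly $\frac{4}{n_i}-A$, so that $r_i=B/D_i$ with $D_i:=\frac{4}{n_i}-A$; hence $r_1r_2=B^2/(D_1D_2)$ and $r_1+r_2=B(D_1+D_2)/(D_1D_2)$, and clearing $D_1D_2$ turns the relation into
$$3A\,D_1D_2=B^2\,(D_1+D_2-A).$$

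Finally I would set $A=(m_0-m_\infty)/(m_0m_\infty)$, $B=(m_0+m_\infty)/(m_0m_\infty)$, and $D_i=\bigl(4m_0m_\infty-n_i(m_0-m_\infty)\bigr)/(n_im_0m_\infty)$, multiply through by the common denominator $n_1n_2m_0^3m_\infty^3$, and collect by degree in $(n_1,n_2)$. Using the symmetric-function identities $3(m_0-m_\infty)^2+(m_0+m_\infty)^2=4(m_0^2-m_0m_\infty+m_\infty^2)$ and $(m_0-m_\infty)^2+(m_0+m_\infty)^2=2(m_0^2+m_\infty^2)$ to simplify the $(n_1+n_2)$-coefficient and the $n_1n_2$-coefficient respectively, the left-hand side becomes precisely twice the expression in condition (2), so dividing by $2$ yields the stated identity. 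The computation is entirely routine; the only step needing care is the bookkeeping in this last expansion and noticing the overall factor of $2$ that must be cancelled. There is no conceptual obstacle, as the existence, uniqueness, and positivity arguments were all settled in the preceding paragraphs.
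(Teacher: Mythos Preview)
Your proposal is correct and follows essentially the same approach as the paper, which simply states ``Carrying out the integration in \eqref{ke2} and substituting \eqref{rscondition}, we have the following'' and leaves the computation to the reader. You have filled in those details accurately---the reduction of \eqref{ke2} to $A(3+r_1r_2)=B(r_1+r_2)$, the substitution $r_i=B/D_i$, and the final expansion (with the overall factor of $2$) all check out, and your framing of the log Fano necessity and the ``happens to be admissible'' clause via uniqueness of the soliton is exactly what the surrounding text establishes.
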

\begin{example}\label{nonex}

If we assume $n_1=1$ and $n_2=2$, then the equation in Proposition \ref{KEDioPhan} rewrites to 
$$ 6 (2 m_0 m_\infty -m_0 + m_\infty ) (m_0^2(-1 + 2 m_\infty) - m_\infty^2(1 + 2 m_0))=0.$$
Clearly $2 m_0 m_\infty -m_0 + m_\infty =0$ has no positive integer solutions $(m_0,m_\infty)$.
Likewise, assume by contradiction that  $(m_0,m_\infty)$ are positive integer solutions of
$$m_0^2(-1 + 2 m_\infty) - m_\infty^2(1 + 2 m_0)=0,$$
i.e.,
$$m_0^2(-1 + 2 m_\infty) = m_\infty^2(1 + 2 m_0).$$
Since $gcd(2m_i\pm 1, m_i)=1$ for $i=0,\infty$ we see that this would imply that $m_0=m_\infty$ (since they would have to have the same prime factorization).
But $m_0=m_\infty$ will clearly never solve the equation.
\end{example}

\begin{remark}
The diophantine nature of the equation in Proposition \ref{KEDioPhan} makes it hard to spot solutions other than the classic smooth Koiso-Sakane example (see Example \ref{ksvariations} below). Further, Example \ref{nonex} and Proposition \ref{KEDioPhan} tells us that there exist at least one pair $\bfn = (n_1, n_2)$ 
such that for all pairs $\bfm=(m_0,m_\infty)\in \bbz^+\times \bbz^+$, the Bott orbifold $(S_\bfn,\Delta_\bfm)$ admits no K\"ahler-Einstein metric.
For these reasons we will instead view a choice of $(r_1,r_2) \in \bbq^2$ such that $0<|r_i|<1$ as a pair of parameters that determine a unique KE example on the appropriate corresponding KS orbifold. 
\end{remark}

From this point of view, note that \eqref{ke2} is equivalent to the equation
\begin{equation}\label{ke3}
\frac{m_\infty}{m_0} = \frac{3 +r_1 r_2 - r_1-r_2}{3+r_1 r_2+ r_1+r_2}.
\end{equation}
and, assuming \eqref{ke3}, equation \eqref{rscondition} is equivalent to
\begin{equation}\label{n1&n2}
\frac{n_1}{m_0} = \frac{2r_1(3 +r_1 r_2 - r_1-r_2)}{3+2 r_1 r_2+ r_1^2},\quad \frac{n_2}{m_0} = \frac{2r_2(3 +r_1 r_2 - r_1-r_2)}{3+2 r_1 r_2+ r_2^2}.
\end{equation}
We notice that if we pick a rational pair $(r_1,r_2)$ such that $0<|r_i|<1$, then there is a unique quadruple of appropriate integers $(n_1,n_2,m_0,m_\infty)$, solving
\eqref{ke3} and \eqref{n1&n2}, such that $n_i$ has the same sign as $r_i$, $m_i>0$, and $\gcd(|n_1|, |n_2|,m_0,m_\infty)=1$. 
This yields a KE example on the corresponding KS orbifold $(S_\bfn,\grD_\bfm)\ra{1.8} \bbc\bbp^1\times\bbc\bbp^1$ with fibers
$\bbc\bbp^1[v_0,v_\infty)]/\bbz_{m}$. Here we have used our earlier notation; $m=\gcd(m_0,m_\infty)$ and $\bfm= (m_0,m_\infty)=m\bfv=m(v_0,v_\infty)$.

\begin{example}\label{ksvariations}
Assume $r_1=r$ and $r_2=-r$ with $r \in (0,1)\cap \bbq$. Then \eqref{ke3} and \eqref{n1&n2} yields
$$\frac{m_\infty}{m_0} =1,\quad \frac{n_1}{m_0} = 2r, \quad \frac{n_2}{m_0} = -2r.$$
When $r=1/2$ this yields the Koiso-Sakane smooth K\"ahler-Einstein metric on $\bbp(\BOne \oplus \calo(1,-1)) \rightarrow
\bbc\bbp^1 \times \bbc\bbp^1$ (\cite{KoSa86}). In general, we write $r=p/q$ in reduced form with co-prime integers $0<p<q$.
Then
$$(n_1,n_2,m_0,m_\infty) = \left\{\begin{array}{cl} (2p,-2p,q,q) & \text{if}\,\, q\,\, \text{is odd}\\
\\
 (p,-p, q/2,q/2) & \text{if}\,\, q\,\, \text{is even}
 \end{array}\right. $$
This gives us K\"ahler-Einstein metrics on $\bbp(\BOne\oplus \calo(2p,-2p))$ with fibers
$\bbc\bbp^1/\bbz_q$ and on $\bbp(\BOne\oplus \calo(p,-p))$ with fibers
$\bbc\bbp^1/\bbz_{q/2}$, respectively. Using \eqref{index} we see that the index is  equal to $2$ if $q$ is odd and equal to $1$ if $q$ is even.

\end{example}
 
Below we will explore more examples beyond the Koiso-Sakane examples.
We set $r_i=p_i/q_i$ with $q_i \in \bbz^{\geq 2}$ and $p_i\in \bbz$ such that $0<|p_i|<q_i$ and $\gcd(|p_i|,q_i)=1$. Further, without loss we can assume that 
$r_1$, hence $p_1$, is positive. We then observe that \eqref{ke3} and \eqref{n1&n2} are equivalent to

\begin{equation}\label{solnfam}
\begin{array}{ccl}
\frac{m_\infty}{m_0} & = & \frac{3q_1 q_2 +p_1 p_2 - p_1q_2-p_2q_1}{3q_1 q_2+p_1 p_2+ p_1q_2+p_2q_1}\\
\\
\frac{n_1}{m_0} &= & \frac{2p_1(3q_1q_2 +p_1 p_2 - p_1q_2-p_2q_1)}{3q_1^2q_2+2 p_1 p_2q_1+ p_1^2q_2}\\
\\
\frac{n_2}{m_0} & = & \frac{2p_2(3q_1q_2 +p_1 p_2 - p_1q_2-p_2q_1)}{3q_1q_2^2+2 p_1 p_2q_2+ p_2^2q_1}.
\end{array}
\end{equation}

Note that, due to $0<|p_i|<q_i$, the denominator of each the right hand fractions in \eqref{solnfam} are all positive. Further, the numerator of the first two right hand fractions are positive while the numerator of the last right hand fraction has the same sign as $p_2$. Thus if we set
 $$k =\gcd \left(\begin{array}{r}
(3q_1 q_2+p_1 p_2+ p_1q_2+p_2q_1) \left(3q_1^2q_2+2 p_1 p_2q_1+ p_1^2q_2\right) \left(3q_1q_2^2+2 p_1 p_2q_2+ p_2^2q_1\right),\\
(3q_1 q_2 +p_1 p_2 - p_1q_2-p_2q_1) \left(3q_1^2q_2+2 p_1 p_2q_1+ p_1^2q_2\right) \left(3q_1q_2^2+2 p_1 p_2q_2+ p_2^2q_1\right),\\
2p_1(3q_1q_2 +p_1 p_2 - p_1q_2-p_2q_1)(3q_1 q_2+p_1 p_2+ p_1q_2+p_2q_1) \left(3q_1q_2^2+2 p_1 p_2q_2+ p_2^2q_1\right),\\
2|p_2|(3q_1q_2 +p_1 p_2 - p_1q_2-p_2q_1) (3q_1 q_2+p_1 p_2+ p_1q_2+p_2q_1) \left(3q_1^2q_2+2 p_1 p_2q_1+ p_1^2q_2\right)
\end{array}\right)$$
we have an appropriate solution $(n_1,n_2,m_0,m_\infty)$ to \eqref{solnfam} given by
\begin{equation}\label{solnfam2}
\begin{array}{ccl}
n_1&=&\frac{2p_1(3q_1q_2 +p_1 p_2 - p_1q_2-p_2q_1)(3q_1 q_2+p_1 p_2+ p_1q_2+p_2q_1) \left(3q_1q_2^2+2 p_1 p_2q_2+ p_2^2q_1\right)}{k}\\
n_2&=&\frac{2p_2(3q_1q_2 +p_1 p_2 - p_1q_2-p_2q_1) (3q_1 q_2+p_1 p_2+ p_1q_2+p_2q_1) \left(3q_1^2q_2+2 p_1 p_2q_1+ p_1^2q_2\right)}{k}\\
m_0&=&\frac{(3q_1 q_2+p_1 p_2+ p_1q_2+p_2q_1) \left(3q_1^2q_2+2 p_1 p_2q_1+ p_1^2q_2\right) \left(3q_1q_2^2+2 p_1 p_2q_2+ p_2^2q_1\right)}{k}\\
m_\infty&=&\frac{(3q_1 q_2 +p_1 p_2 - p_1q_2-p_2q_1) \left(3q_1^2q_2+2 p_1 p_2q_1+ p_1^2q_2\right) \left(3q_1q_2^2+2 p_1 p_2q_2+ p_2^2q_1\right)}{k}.
\end{array}
\end{equation}
Each of these solutions then yields a KE example on the KS orbifold  $(S_\bfn,\grD_\bfm)=(\bbp(\BOne\oplus \calo(n_1,n_2)),\grD_\bfm )$ with fibers
$\bbc\bbp^1[m_0/\gcd(m_0,m_\infty),m_\infty/\gcd(m_0,m_\infty)]/\bbz_{\gcd(m_0,m_\infty)}$.

\begin{proposition}\label{KEexistencethm}
There exists a four-parameter family of KS orbifolds with KE orbifold metrics. The parameters $(p_1,p_2,q_1,q_2)$ are integers such that
$0<p_1<q_1$, $0<|p_2|<q_2$, and $\gcd(|p_i|,q_i)=1$.
\end{proposition}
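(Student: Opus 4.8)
The plan is to read the proposition off directly from the construction in \eqref{solnfam}--\eqref{solnfam2}, invoking the K\"ahler-Ricci soliton existence result proved just above (log Fano is equivalent to the existence of an admissible K\"ahler-Ricci soliton, which is K\"ahler-Einstein precisely when the soliton constant $c$ vanishes, i.e.\ when $G(0)=0$). Almost all of the analytic content is already in place; what remains is to feed each admissible quadruple $(p_1,p_2,q_1,q_2)$ into that criterion and to check that the four parameters are genuinely independent.

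First I would fix $(p_1,p_2,q_1,q_2)$ with $0<p_1<q_1$, $0<|p_2|<q_2$, and $\gcd(|p_i|,q_i)=1$, and set $r_i=p_i/q_i$, so that $0<|r_i|<1$ and $r_1>0$. Defining $(n_1,n_2,m_0,m_\infty)$ by \eqref{solnfam2}, I would record three elementary facts. These are integers, since each is its numerator divided by the common divisor $k$. Their signs are $m_0,m_\infty>0$ and $n_1>0$, with $n_2$ carrying the sign of $p_2$, by the sign analysis of the numerators in \eqref{solnfam} noted just before \eqref{solnfam2}; in particular $r_in_i>0$ for $i=1,2$. And $\gcd(|n_1|,|n_2|,m_0,m_\infty)=1$ by the very definition of $k$ as that gcd.

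Next I would verify that the resulting orbifold is log Fano and carries the asserted K\"ahler-Einstein metric. By construction the quadruple \eqref{solnfam2} solves \eqref{solnfam}, hence satisfies both \eqref{ke3} and \eqref{n1&n2}; since (granting \eqref{ke3}) equation \eqref{n1&n2} is equivalent to \eqref{rscondition}, the pair $(r_1,r_2)$ is exactly the admissible parameter pair attached to $(S_\bfn,\Delta_\bfm)$. Because $0<|r_i|<1$ and $r_in_i>0$, the equivalence recorded immediately after \eqref{rscondition} yields the inequalities of Proposition \ref{orbFano}, so $(S_\bfn,\Delta_\bfm)$ is log Fano and the soliton existence result applies. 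Finally, \eqref{ke3} is equivalent to the K\"ahler-Einstein condition \eqref{ke2}, equivalently to $G(0)=0$ for $G$ as in \eqref{Gfunction}; thus the soliton constant is $c=0$ and the admissible metric is K\"ahler-Einstein.

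It remains to see that the family is genuinely four-parameter. The assignment $(p_1,p_2,q_1,q_2)\mapsto(r_1,r_2)$ is injective because each $r_i$ is given in lowest terms, and $(r_1,r_2)\mapsto(n_1,n_2,m_0,m_\infty)$ is injective as well, since \eqref{rscondition} recovers $(r_1,r_2)$ from the orbifold data. Hence distinct admissible quadruples yield distinct KS orbifolds, and the construction delivers a bona fide four-parameter family of K\"ahler-Einstein KS orbifolds. The only substantive step is the Fano verification, and even that is reduced, by the equivalence after \eqref{rscondition}, to the inequalities $0<|r_i|<1$ already built into the parameter constraints; everything else is sign bookkeeping and the injectivity count.
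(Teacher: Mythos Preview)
Your proposal is correct and follows essentially the same approach as the paper: the proposition is stated in the paper as a direct summary of the construction in \eqref{solnfam}--\eqref{solnfam2} and the preceding discussion, with no separate proof environment. You have simply made explicit the sign checks, the log Fano verification via the equivalence after \eqref{rscondition}, and the injectivity argument; all of this is implicit in the paper's construction and your write-up is a faithful unpacking of it.
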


\begin{remark}
One might ask the following question. Suppose we have a co-prime quadruple $(n_1,n_2,m_0,m_\infty)$ satisfying the conditions in Proposition \ref{KEDioPhan}, that is,
$(S_\bfn,\Delta_\bfm)$ admits a KE metric with $\bfn=(n_1,n_2)$ and $\bfm=(m_0,m_\infty)$. Fixing $\bfn=(n_1,n_2)$, does there exist another pair 
$\tilde{\bfm} = (\tilde{m}^0,\tilde{m}^\infty)$ with $\tilde{\bfm} \neq \bfm$, such that $(n_1,n_2,\tilde{m}^0,\tilde{m}^\infty)$ is coprime and also satisfies the conditions in Proposition \ref{KEDioPhan}. In other words, if $\bfn$ has an appropriate choice of $\bfm$ such that $(S_\bfn,\Delta_\bfm)$ admits a KE metric, is this $\bfm$ then unique?
\end{remark}

\begin{example}\label{KEex}
If we pick $p_1=p_2=1$, $q_1=2$, and $q_2=q>2$, then \eqref{solnfam2} becomes
$$
\begin{array}{ccccl}
n_1&=&\frac{4(5q -1)(7q+3) \left(3q^2+ q+ 1\right)}{k}&=&\frac{2(5q -1)(7q+3) \left(3q^2+ q+ 1\right)}{\hat{k}}\\
n_2&=&\frac{2(5q -1) (7q+3) \left(13q+4\right)}{k}&=&\frac{(5q -1) (7q+3) \left(13q+4\right)}{\hat{k}}\\
m_0&=&\frac{2(7q+3) \left(13q+4\right) \left(3q^2+ q+ 1\right)}{k}&=&\frac{(7q+3) \left(13q+4\right) \left(3q^2+ q+ 1\right)}{\hat{k}}\\
m_\infty&=&\frac{2(5q -1) \left(13q+4\right) \left(3q^2+q+ 1\right)}{k}&=&\frac{(5q -1) \left(13q+4\right) \left(3q^2+q+ 1\right)}{\hat{k}},
\end{array}
$$
where 
 $$
 \hat{k} =\gcd \left(\begin{array}{r}
 2(5q -1)(7q+3) \left(3q^2+ q+ 1\right),\\
(5q -1) (7q+3) \left(13q+4\right),\\
(7q+3) \left(13q+4\right) \left(3q^2+ q+ 1\right),\\
(5q -1) \left(13q+4\right) \left(3q^2+q+ 1\right) 
\end{array}\right).$$
Likewise, if we pick $p_1=1$, $p_2=-1$, $q_1=2$, and $q_2=q>2$, then \eqref{solnfam2} becomes
$$
\begin{array}{ccccl}
n_1&=&\frac{4(5q +1)(7q-3) \left(3q^2-q+ 1\right)}{k}&=&\frac{2(5q +1)(7q-3) \left(3q^2-q+ 1\right)}{\hat{k}}\\
n_2&=&\frac{-2(5q +1) (7 q-3) \left(13q-4\right)}{k}&=&\frac{-(5q +1) (7 q-3) \left(13q-4\right)}{\hat{k}}\\
m_0&=&\frac{2(7 q-3) \left(13q-4\right) \left(3q^2- q+ 1\right)}{k}&=&\frac{(7 q-3) \left(13q-4\right) \left(3q^2- q+ 1\right)}{\hat{k}}\\
m_\infty&=&\frac{2(5q +1) \left(13q-4\right) \left(3q^2-q+1\right)}{k}&=&\frac{(5q +1) \left(13q-4\right) \left(3q^2-q+1\right)}{\hat{k}},
\end{array}
$$
where 
 $$\hat{k} =\gcd \left(\begin{array}{r}
2(5q +1)(7q-3) \left(3q^2-q+ 1\right),\\
(5q +1) (7 q-3) \left(13q-4\right),\\
(7 q-3) \left(13q-4\right) \left(3q^2- q+ 1\right),\\
(5q +1) \left(13q-4\right) \left(3q^2-q+1\right) 
\end{array}\right).$$
The appendix contains a table representing a sample family of these solutions as well as the classic smooth Koiso-Sakane example from Example \ref{ksvariations}.
Using \eqref{index}, we calculated the (orbifold) index $\cali_{\bfn,\bfm}$ of each $(S_\bfn,\grD_\bfm)$.

\end{example}

\subsection{Extremal and CSC K\"ahler metrics}
More generally, the admissible metrics are {\em extremal}, as defined by Calabi \cite{Cal82}, if and only if the scalar curvature of $g$, which is given as a function of $\gz$ by
\begin{equation}
Scal(g) =  
\frac{2 s_1 r_1 }{1+r_1\gz} +\frac{2 s_2 r_2 }{1+r_2\gz} - \frac{F''(\gz)}{p_c(\gz)},
\label{scalarcurvature}
\end{equation}
is a holomorphic potential, i.e., a linear affine function of $\gz$. Following the arguments in \cite{ACGT08} and considering the orbifold case at hand, it is easy to see that
the proof of Proposition 11 together with Section 2.2 in \cite{ACGT08} adapts to give us the following result.
\begin{proposition}\label{extremalexistence}
Any K\"ahler class $\Omega_\bfr$ on $(S_\bfn,\Delta_\bfm)$ admits an admissible extremal metric with scalar curvature
equal to an affine linear function of $\gz$. Moreover, this metric is (positive) CSC (i.e., the function is constant) if and only if
$\alpha_0 \beta_1-\alpha_1\beta_0 =0$, where
\begin{equation}\label{alpha-betaCSC}
\begin{split}
\alpha_r =& \int_{-1}^1  t^r p_c(t)dt \\
\beta_r   = &  \int_{-1}^1\Big(r_1 s_1(1+r_2t)+r_2 s_2(1+r_1t)\Big)  t^r dt \\
                        & + (-1)^r p_c(-1)/m_\infty +  p_c(1)/m_0.
                         \end{split}
                         \end{equation}
\end{proposition}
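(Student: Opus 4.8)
The plan is to follow the admissible-metric calculus of \cite{ACGT08}, reducing the extremal equation to a single ODE for the profile $F$. First I would impose extremality directly: by \eqref{scalarcurvature} the metric is extremal exactly when $Scal(g) = A + B\gz$ for constants $A,B$. Using $p_c(\gz) = (1+r_1\gz)(1+r_2\gz)$ together with the identities $p_c(\gz)/(1+r_i\gz) = 1 + r_j\gz$ (for $\{i,j\}=\{1,2\}$), the equation $Scal(g)=A+B\gz$ becomes the polynomial identity
\[
F''(\gz) = 2s_1 r_1(1+r_2\gz) + 2 s_2 r_2(1+r_1\gz) - p_c(\gz)\,(A+B\gz),
\]
so that $F$ is recovered by integrating a cubic twice, leaving the four parameters $A$, $B$ and two constants of integration to be fixed by the endpoint data of \eqref{positivityForbifold}.

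Second, I would convert conditions $(ii)$ and $(iii)$ of \eqref{positivityForbifold} into moment conditions. Integrating the displayed identity over $[-1,1]$ and integrating by parts (using $F(\pm1)=0$) gives $\int_{-1}^1 F''\,d\gz = F'(1)-F'(-1)$ and $\int_{-1}^1 \gz F''\,d\gz = F'(1)+F'(-1)$; substituting $F'(1) = -2p_c(1)/m_0$ and $F'(-1)=2p_c(-1)/m_\infty$ and reading off the $\alpha_r,\beta_r$ of \eqref{alpha-betaCSC} yields the linear system $\alpha_0 A + \alpha_1 B = 2\beta_0$ and $\alpha_1 A + \alpha_2 B = 2\beta_1$. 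Since $p_c>0$ on $[-1,1]$, the moments $\alpha_r=\int_{-1}^1 t^r p_c(t)\,dt$ are those of a positive weight, so $\alpha_0\alpha_2-\alpha_1^2>0$ by Cauchy--Schwarz ($1$ and $\gz$ being linearly independent). Hence $(A,B)$ is uniquely determined, and integrating the identity with initial data $F(-1)=0$, $F'(-1)=2p_c(-1)/m_\infty$ then reproduces the remaining endpoint conditions at $\gz=+1$ precisely because the two moment relations hold. This establishes $(ii)$ and $(iii)$.

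The hard part is the positivity condition $(i)$, namely $F>0$ on $(-1,1)$; this is exactly where the argument of Section 2.2 of \cite{ACGT08} must be adapted. Because $F(\pm1)=0$ with $F'(-1)=2p_c(-1)/m_\infty>0$ and $F'(1)=-2p_c(1)/m_0<0$, it suffices to show $F$ is unimodal, i.e. that $F'$ has a single interior zero. Writing $F''=p_c\cdot H$ with $H(\gz)=2s_1 r_1/(1+r_1\gz)+2s_2 r_2/(1+r_2\gz)-(A+B\gz)$, each summand $2s_ir_i/(1+r_i\gz)$ is strictly decreasing (as $s_ir_i>0$ and $0<|r_i|<1$), so when $r_1,r_2>0$ the function $H$ is convex and a direct sign-count of $F''$ forces $F'$ to change sign exactly once, giving $F>0$. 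The delicate case is $r_1 r_2<0$ (i.e. $n_1 n_2<0$), where $H$ is neither monotone nor convex; controlling the sign changes of the cubic $F''$ and ruling out interior zeros of $F$ is the main obstacle, and I expect to handle it exactly as in \cite{ACGT08}.

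Finally, the CSC dichotomy follows from the linear system. By Cramer's rule $B = 2(\alpha_0\beta_1-\alpha_1\beta_0)/(\alpha_0\alpha_2-\alpha_1^2)$, and since the denominator is positive, $Scal(g)=A+B\gz$ is constant if and only if $\alpha_0\beta_1-\alpha_1\beta_0=0$, as asserted. For positivity of the constant, when $B=0$ one has $A=2\beta_0/\alpha_0$; here $\alpha_0>0$, the boundary terms $p_c(\pm1)/m_0$ and $p_c(\mp1)/m_\infty$ are positive, and the integrand $r_1 s_1(1+r_2 t)+r_2 s_2(1+r_1 t)$ is positive on $[-1,1]$ since $r_is_i>0$ and $1+r_i t>0$, so $\beta_0>0$ and hence $A>0$. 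This yields the claimed positive CSC metric.
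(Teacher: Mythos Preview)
Your proposal is correct and follows essentially the same route the paper indicates: reduce the extremal condition to the ODE for $F$, convert the orbifold endpoint data \eqref{positivityForbifold} into the two moment equations $\alpha_0 A+\alpha_1 B=2\beta_0$, $\alpha_1 A+\alpha_2 B=2\beta_1$, solve uniquely for $(A,B)$ via Cauchy--Schwarz, and read off the CSC criterion from Cramer's rule. The paper does not spell any of this out --- it simply says the proof of Proposition~11 and Section~2.2 of \cite{ACGT08} adapt to the orbifold boundary conditions --- so your write-up is in fact more explicit than the paper's.

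One small caution on your positivity sketch: the claim that each summand $2s_ir_i/(1+r_i\gz)$ is \emph{strictly decreasing} is not what gives convexity (and is false when $r_i<0$, since the derivative is $-2s_ir_i^2/(1+r_i\gz)^2$ and $s_i$ can be negative). What you actually want is that each summand is \emph{convex} on $[-1,1]$, which does hold in all cases because the second derivative $4s_ir_i^3/(1+r_i\gz)^3=4(s_ir_i)r_i^2/(1+r_i\gz)^3$ is positive (using $s_ir_i>0$ and $1+r_i\gz>0$). With that fix your convexity-based sign-count goes through for the case $r_1r_2>0$ and, as you say, the remaining case is handled by the root-counting argument of \cite{ACGT08}; this is exactly what the paper invokes as well.
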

In the smooth case, this has been thoroughly explored in \cite{Gua95}, \cite{Hwa94}, and \cite{HwaSi02}, as well as Section 3.4 of \cite{ACGT08}. Here we will just mention that with $s_1=2/n_1$ and $s_2=2/n_2$, the equation
$\alpha_0 \beta_1-\alpha_1\beta_0 =0$ is equivalent to $f(r_1,r_2)=0$ where
\begin{equation}\label{CSCcond}
\begin{split}
f(r_1,r_2)= & 9 (m_0 - m_\infty) n_1 n_2 - 6 (m_0 + m_\infty) n_1 n_2 (r_1 + r_2) + 6 (m_0 - m_\infty) n_1 n_2 r_1r_2\\
&+ 3n_2 (4 m_0 m_\infty - n_1 (m_0 - m_\infty)) r_1^2 +
 3n_1 (4 m_0 m_\infty - n_2 (m_0 - m_\infty))  r_2^2 \\
 & - (4 m_0 m_\infty (n_1 + n_2) - 
    3 (m_0 - m_\infty) n_1 n_2) r_1^2 r_2^2.
    \end{split}
\end{equation}

\begin{proposition}\label{orbtocsc}
For any value of $n_1, n_2 \neq 0$, there exist a choice of $m_0$ and $m_\infty$ such that $(S_\bfn,\Delta_\bfm)$ admits K\"ahler classes $\Omega_\bfr$ with admissible constant scalar curvature K\"ahler metrics.
\end{proposition}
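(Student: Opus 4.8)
The plan is to exploit the freedom in the two integer parameters $m_0, m_\infty$ to force the CSC condition $f(r_1,r_2)=0$ of Proposition \ref{extremalexistence} to be satisfiable, where $r_1, r_2$ are constrained only by $0<|r_i|<1$ and $r_i n_i>0$. By Proposition \ref{extremalexistence}, every admissible class $\Omega_\bfr$ already carries an admissible extremal metric, so the entire task is to produce, for the given $n_1,n_2$, a single admissible pair $(r_1,r_2)$ together with ramification indices $(m_0,m_\infty)$ making $f(r_1,r_2)=0$. Rather than solving the full cubic-in-$r_i$ equation \eqref{CSCcond} in general, I would first reduce to a tractable subcase by a symmetry or diagonal ansatz.

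First I would treat the diagonal situation. When $n_1 = n_2$ (or more generally after rescaling to $r_1=r_2=r$, which is admissible precisely when $n_1 n_2 > 0$), the polynomial $f$ collapses to a one-variable equation in $r$, and the coefficients become manageable functions of $m_0,m_\infty$. The cleanest device is to set $m_0 = m_\infty = m$: inspecting \eqref{CSCcond}, every term carrying the factor $(m_0 - m_\infty)$ drops out, leaving $f = -6(m_0+m_\infty)n_1 n_2 (r_1+r_2) + 3 n_2\cdot 4 m_0 m_\infty r_1^2 + 3 n_1 \cdot 4 m_0 m_\infty r_2^2 - 4 m_0 m_\infty (n_1+n_2) r_1^2 r_2^2$, which at $r_1=r_2=r$ and $m_0=m_\infty=m$ reduces to a quadratic-type relation one can solve for $r$ in the admissible window $(0,1)$ (or $(-1,0)$, matching the sign of $n_i$). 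This disposes of all cases with $n_1 n_2 > 0$.

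For the remaining case $n_1 n_2 < 0$ the diagonal ansatz is unavailable since $r_1,r_2$ must carry opposite signs, so I would instead keep $(r_1,r_2)$ generic and argue existence by a continuity/intermediate-value argument. Viewing $f(r_1,r_2)$ as a continuous function on the admissible rectangle, I would evaluate it at two configurations — for instance near a boundary face where one $r_i\to 0$ versus near the opposite corner — and exhibit a sign change, so that $f$ vanishes somewhere in the interior for a suitable fixed choice of $(m_0,m_\infty)$; choosing $m_0,m_\infty$ large or with a prescribed ratio lets me tune the dominant terms. Since we only need \emph{some} admissible class, it suffices to find one point in the interior of the rectangle at which $f=0$, and a rational such point then determines rational $r_i$, hence genuine admissible K\"ahler classes and (via Proposition \ref{extremalexistence}) CSC metrics.

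The main obstacle I anticipate is the off-diagonal case: verifying that the sign change of $f$ actually occurs \emph{within} the open admissible region $0<|r_i|<1$ with $r_i n_i>0$, rather than only outside it, and confirming that the resulting extremal metric is indeed of positive CSC type. Controlling the signs of the large bracketed coefficients in \eqref{CSCcond} as $m_0,m_\infty$ vary — in particular ensuring the quartic term $4 m_0 m_\infty(n_1+n_2) - 3(m_0-m_\infty)n_1 n_2$ does not conspire to keep $f$ of one sign throughout the rectangle — is where the real bookkeeping lies, and it is here that the flexibility to choose $m_0,m_\infty$ freely (the content of the statement ``there exist a choice of $m_0$ and $m_\infty$'') will be essential.
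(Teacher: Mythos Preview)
Your overall strategy---reduce to showing $f(r_1,r_2)=0$ somewhere in the admissible rectangle, and use an intermediate-value argument---is exactly what the paper does. But you have the two cases inverted, and each of your case treatments has a gap.

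For $n_1n_2>0$ you take $m_0=m_\infty=m$ and $r_1=r_2=r$. The resulting cubic $r^3-3r+\dfrac{6n_1n_2}{m(n_1+n_2)}=0$ has a root in $(0,1)$ \emph{only if} $m>\dfrac{3n_1n_2}{n_1+n_2}$; for instance with $n_1=n_2=2$, $m=1$ there is no root in $(0,1)$. You never state this threshold, so ``one can solve for $r$ in the admissible window'' is not yet justified. The paper instead keeps $(r_1,r_2)$ two-dimensional and evaluates $f$ at the corners: $f(0,0)=9(m_0-m_\infty)n_1n_2$ and $f(1,1)=8m_\infty\bigl(m_0(n_1+n_2)-3n_1n_2\bigr)$, so choosing $m_\infty>m_0>\dfrac{3n_1n_2}{n_1+n_2}$ forces opposite signs and gives a zero on any path from $(0,0)$ to $(1,1)$. (Note the same threshold $3n_1n_2/(n_1+n_2)$ appears; your diagonal reduction would work once you impose it.)

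For $n_1n_2<0$ you flag this as the hard case and leave the IVT argument vague. In the paper this is the \emph{easy} case: set $m_0=m_\infty$ and invoke the known smooth result of Hwang, so no new sign analysis is needed at all.

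Finally, your closing concerns are unnecessary here: the proposition does not require $(r_1,r_2)$ to be rational (Remark~\ref{4.14caution} explicitly notes that rationality is not guaranteed), nor does it assert positivity of the scalar curvature (that is handled separately in Remark~\ref{cscremark}).
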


\begin{proof}
In the case of $n_1n_2<0$ we can choose $m_0=m_\infty$ and the existence of CSC K\"ahler classes follows from \cite{Hwa94} (see also \cite{ACGT08} for details using the present notation). In the case of $n_1n_2>0$, notice that $f(0,0)=9(m_0-m_\infty)n_1n_2$ while $f(1,1)=8 m_\infty (m_0(n_1 +n_2) - 3 n_1 n_2)$. So, if we choose
$m_0$ and $m_\infty$ such that $m_\infty > m_0> \frac{3n_1 n_2}{n_1+n_2}$, we have $f(0,0)<0$ and $f(1,1)>0$. This means that any continuous curve going from
$(0,0)$ to $(1,1)$ in the square $0<r_1,r_2<1$ must contain at least one point $(r_1,r_2)$ where $f$ vanishes. This ensures the existence of solutions $0<r_1,r_2<1$ to 
the equation $f(r_1,r_2)=0$. In turn, this implies the existence of classes $\Omega_\bfr$ with admissible constant scalar curvature K\"ahler metrics.
\end{proof}

\begin{example}\label{KoisoSakaneClassic}
When $m_0=m_\infty=1$, $n_1=1$, and $n_2=-1$, we have $f(r_1,r_2) = -12(-1+r_1-r_2)(r_1+r_2)$, reconfirming the smooth CSC K\"ahler metrics on \newline
$\bbp(\BOne \oplus \calo(1,-1)\ra{1.8} \bbc\bbp^1\times\bbc\bbp^1$ as stated in Theorem 9 of \cite{ACGT08}.
\end{example}

\begin{remark}\label{4.14caution}
Note that in general there is no guarantee the $CSC$ K\"ahler classes $\Omega_\bfr$ from Proposition \ref{orbtocsc} are rational (i.e. both $r_1$ and $r_2$ are rational) and thus useful from the Sasakian geometry point of view. With the constraint that $r_1, r_2$ are rational, the equation $f(r_1,r_2)=0$ is diophantine in nature. Suppose $f(r_1,r_2) \neq 0$ for some rational class $\Omega_\bfr$ and assume we have a Boothby-Wang constructed Sasaki manifold defined by
an integer orbifold K\"ahler class obtained by an appropriate rescale of $\Omega_r$. We shall see in the next subsection that then we will have a CSC Sasaki metric 
somewhere else in the Sasaki cone of this Sasaki maniifold. The key to seeing this is to use the connection discovered by Apostolov and Calderbank in \cite{ApCa18} between the so-called {\em weighted extremal metrics} and extremal metrics appearing as transverse structures for different rays in the same Sasaki cone.
\end{remark}

\subsection{Weighted Extremal Metrics}
Let $(M,g,\omega)$ be a K\"ahler orbifold of complex dimension $m$, $f$ a positive Killing potential on $M$, and (weight) $p\in \bbr$.
Then the  $(f,p)$-Scalar curvature of $g$ is given by
\begin{equation}\label{scalh1}
Scal_{f,p}(g)= f^2 Scal(g)  -2(p-1) f\Delta_g f - p(p-1)|df|^2_g,
\end{equation}
If $Scal_{f,p}(g)$ is a Killing potential, $g$ is said to be a {\it $(f,p)$-extremal K\"ahler metric}.  
The case $p=2m$ has been studied by several people and is interesting due to the fact that $Scal_{f,2m}(g)$ computes the scalar curvature of the Hermitian metric $h = f^{-2} g$.

However, the case of interest to us here is when $p=m+2$. This case is related to the study of extremal Sasaki metrics \cite{ApCa18, ApCaLe21, ApJuLa21}. Indeed, if we assume that the K\"ahler class $[\omega/2\pi]$ is an integer orbifold class giving a Boothby-Wang constructed (smooth) Sasaki manifold over $(M,g,\omega)$, then $\frac{Scal_{f,m+2}(g)}{f}$ is equal to the transverse scalar curvature of a certain Sasaki structure (determined by $f$) in the Sasaki cone. More precisely, if $\chi$ is the Reeb vector field of the Sasaki structure coming directly from the Boothby-Wang construction over $(M,g,\omega)$ and $f$ is viewed as a pull-back to the Sasaki manifold, then (mod $\cald$) $\xi:= f\chi$ is a Reeb vector field in the Sasaki cone giving a new Sasaki structure. While the pull-back from $M$ of $Scal(g)$ is the Tanaka-Webster scalar curvature of the Tanaka-Webster connection induced by $\chi$, the expression $\frac{Scal_{f,m+2}(g)}{f}$ pulls back from $M$ to be the Tanaka-Webster scalar curvature of the Tanaka-Webster connection induced by $\xi$. The latter is then also identified with the transverse scalar curvature of the Sasaki structure defined by $\xi$. This fact is seen from the details of the proof of Lemma 3 in \cite{ApCa18}. As also follows from \cite{ApCa18} (see their Theorem 1), the Sasaki structure determined by $f$ is extremal if and only if $g$ is $(f,m+2)$-extremal.

Now, returning to our orbifolds at hand we have that $m=3$ and so $m+2=5$. Note that $|\gz+b|$ for $b\in \bbr$ such that $|b|>1$ defines a Killing potential on $(S_\bfn,\Delta_\bfm)$. 

It is not hard to check that Section 2 and the existence result in the second half of Theorem 3.1. of \cite{ApMaTF18} adapts to get us the next proposition.
Note in particular, that whether we use $f=\gz+b$ (when ($b>1$) or $f=-(\gz+b)$ (when $b<-1$), the formula for $Scal_{f,p}$ in \eqref{scalh1} will give us the same result, i.e., the right hand side of (19) in \cite{ApMaTF18}. In general the assumption of $b>1$ in \cite{ApMaTF18} is merely practical and all the arguments are easily adapted to include the $b<-1$ case as well. Further, the root counting argument in the proof of Theorem 3.1.in \cite{ApMaTF18} is not affected by our mild orbifold conditions.

\begin{proposition}\label{CRtwistCSC} 
Let $b\in \bbr$ such that $|b|>1$. 
Any K\"ahler class $\Omega_\bfr$ on $(S_\bfn,\Delta_\bfm)$ admits a $(|\gz +b|,5)$-extremal K\"ahler metric with weighted scalar curvature
$Scal_{|\gz +b|,5}=A_1\gz+A_2$ for constants $A_1, A_2$ given as the unique solutions of the following linear system
\begin{equation}\label{A1andA2}
\begin{array}{ccc}
\alpha_{1, -6} A_1 + \alpha_{0, -6} A_2 =  2 \beta_{0, -4}\\
\\
\alpha_{2, -6} A_1 + \alpha_{1, -6} A_2 =  2\beta_{1, -4},
\end{array}
\end{equation}
where
\begin{equation}\label{alpha-beta-r-q}
\begin{split}
\alpha_{r,-6} =& \int_{-1}^1 (t+ b)^{-6} t^r p_c(t)dt \\
\beta_{r,-4}   = &  \int_{-1}^1\Big( \frac{2r_1}{n_1}(1+r_2 t)+ \frac{2r_2}{n_2}(1+r_1 t)\Big)  t^r (t+ b)^{-4}dt \\
                        & +(-1)^r(b-1)^{-4}p_c(-1)/m_\infty + (1+b)^{-4} p_c(1)/m_0.
                         \end{split}
                         \end{equation}
Moreover, assuming that $\Omega_\bfr$ is rational and we can form a Boothby-Wang constructed Sasaki manifold with respect to an appropriate rescale of
$\Omega_\bfr$, the extremal Sasaki structure determined by $f=|\gz+b|$ has constant scalar curvature if and only if 
$Scal_{|\gz +b|,5}=A_1\gz+A_2$ is a constant multiple of $|\gz+b|$, i.e., if and only if  $A_1 b-A_2=0$.                        
\end{proposition}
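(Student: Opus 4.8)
The plan is to reduce the $(|\gz+b|,5)$-extremal equation on an admissible metric to a linear second-order ODE whose boundary data simultaneously produce the system \eqref{A1andA2} and the CSC criterion; throughout I take $b>1$, $f=\gz+b$, the $b<-1$ case being identical since $Scal_{f,p}$ is insensitive to the sign of $f$. First I would record the admissible expressions entering \eqref{scalh1}: from the explicit metric \eqref{g}, any $\phi(\gz)$ satisfies $|d\phi|^2_g=\Theta(\gz)(\phi')^2=F(\phi')^2/p_c$ and $\Delta_g\phi=-\frac{1}{p_c}(F\phi')'$ (Hodge sign). Taking $\phi=f=\gz+b$ and combining with \eqref{scalarcurvature}, $p=5$, $s_i=2/n_i$, and $p_c/(1+r_i\gz)=1+r_j\gz$, one obtains
$$p_c(\gz)\,Scal_{f,5}(g)=(\gz+b)^2\bigl[2s_1r_1(1+r_2\gz)+2s_2r_2(1+r_1\gz)\bigr]+L[F],\qquad L[F]:=-(\gz+b)^2F''+8(\gz+b)F'-20F.$$
The algebraic key is the factorisation $L[F]=-(\gz+b)^6\,\frac{d^2}{d\gz^2}\bigl[(\gz+b)^{-4}F\bigr]$, checked directly (the indicial roots of $L$ are $4$ and $5$). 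Setting $W:=(\gz+b)^{-4}F$, the equation $Scal_{f,5}=A_1\gz+A_2$ becomes $W''=R$ with $R=(\gz+b)^{-4}[2s_1r_1(1+r_2\gz)+2s_2r_2(1+r_1\gz)]-(A_1\gz+A_2)(\gz+b)^{-6}p_c$ affine in $(A_1,A_2)$.

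Next I would convert the orbifold endpoint conditions \eqref{positivityForbifold}: (ii) gives $W(\pm1)=0$, and (iii) gives the definite-sign data $W'(-1)=(b-1)^{-4}\,2p_c(-1)/m_\infty>0$ and $W'(1)=-(b+1)^{-4}\,2p_c(1)/m_0<0$. Solving $W''=R$ from the left endpoint with $W(-1)=0$ and the prescribed $W'(-1)$ determines $W$; imposing the right-endpoint conditions $W(1)=0$ and the prescribed $W'(1)$, and using the identities $\int_{-1}^1 R=W'(1)-W'(-1)$ and (with $W(\pm1)=0$) $\int_{-1}^1 \gz R=W'(1)+W'(-1)$, the boundary contributions encoded in the $(b\mp1)^{-4}p_c(\mp1)/m$ terms of $\beta_{0,-4},\beta_{1,-4}$ from \eqref{alpha-beta-r-q} cancel precisely against $W'(\mp1)$, leaving exactly the system \eqref{A1andA2}. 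Its matrix has determinant $\alpha_{1,-6}^2-\alpha_{0,-6}\alpha_{2,-6}<0$ by Cauchy--Schwarz for the positive measure $(t+b)^{-6}p_c(t)\,dt$ on $[-1,1]$ (strict, as $t$ is non-constant), so $(A_1,A_2)$ is unique and $F=(\gz+b)^4W$ automatically satisfies (ii) and (iii), realising $g$ as $(|\gz+b|,5)$-extremal with $Scal_{f,5}=A_1\gz+A_2$. The genuinely delicate point is the positivity (i), $F>0$ on $(-1,1)$; I would obtain it exactly as in the root-counting argument of Theorem 3.1 of \cite{ApMaTF18}, which bounds the interior zeros of $W$ from the sign pattern of $R$ together with $W(\pm1)=0$ and $W'(-1)>0>W'(1)$, and which is unaffected by the mild orbifold modification of the endpoint slopes.

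Finally, for the CSC statement I would invoke the Apostolov--Calderbank correspondence recalled just before the proposition: when $\Omega_\bfr$ is rational and yields a Boothby--Wang Sasaki manifold, the Sasaki structure with Reeb field $\xi=f\chi$ has transverse scalar curvature $Scal_{f,5}(g)/f=(A_1\gz+A_2)/|\gz+b|$. This is constant if and only if $A_1\gz+A_2$ is proportional to $\gz+b$, i.e. if and only if $A_2=A_1b$, which is $A_1b-A_2=0$, as claimed. I expect the reduction and the two integration-by-parts identities to be routine once the factorisation of $L$ is in hand; the main obstacle is the positivity (i), which is precisely why the argument leans on the root-counting step of \cite{ApMaTF18}.
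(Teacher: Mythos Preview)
Your proposal is correct and follows essentially the same approach as the paper, which simply defers to Section~2 and Theorem~3.1 of \cite{ApMaTF18} together with the transverse-scalar-curvature interpretation from \cite{ApCa18}. You have in fact spelled out explicitly the key factorisation $L[F]=-(\gz+b)^6\bigl((\gz+b)^{-4}F\bigr)''$, the two integration-by-parts identities that produce the linear system \eqref{A1andA2}, and the Cauchy--Schwarz argument for $\alpha_{1,-6}^2-\alpha_{0,-6}\alpha_{2,-6}\neq 0$ that the paper only mentions parenthetically after the proposition; the positivity step and the CSC criterion are handled exactly as the paper indicates.
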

Suppose a rational $\Omega_\bfr$ is given on a specific $(S_\bfn,\Delta_\bfm)$.
Solving the linear system for $A_1$ and $A_2$ in Proposition \ref{CRtwistCSC} (note that $\alpha_{1,-6}^2-\alpha_{0,-6}\alpha_{2,-6} \neq 0$) and simplifying a bit, the equation $A_1 b-A_2=0$ may be re-written as
$h(b)=0$ where
\begin{equation}\label{polh}
h(b)= (b^2-1)^7(b(\alpha_{1,-6}\beta_{0,-4}-\alpha_{0,-6}\beta_{1,-4})-(\alpha_{1,-6}\beta_{1,-4}-\alpha_{2,-6}\beta_{0,-4})).
\end{equation}
Using \eqref{alpha-beta-r-q} we see that $h(b)$ simplifies as a polynomial of degree $5$ in the variable $b$ with leading coefficient equal to
$\frac{2f(r_1,r_2)}{9m_0m_\infty n_1n_2}$, where $f(r_1,r_2)$ is given by \eqref{CSCcond}. Since further $\lim_{b\rightarrow \pm 1^{\mp}}h(b) >0$, we conclude that unless $f(r_1,r_2)=0$ (in which case $\Omega_\bfr$ itself has an admissible CSC representative), there must exist
at least one value $b\in (-\infty,-1)\cup (1,+\infty)$ such that $h(b)=0$, hence $A_1 b-A_2=0$, and therefore, for this $b$ value, the extremal Sasaki structure determined by $f=|\gz+b|$ has constant scalar curvature. If $b$ is an irrational number, then the corresponding Sasaki structure is irregular.

\begin{theorem}\label{CSCSasakiExistence}
Suppose $\Omega_\bfr$ is a rational admissible K\"ahler class on a KS orbifold of the form $(S_\bfn,\Delta_\bfm)$.
Let $\cals$ be a Boothby-Wang constructed  Sasaki manifold given by an appropriate rescale of $\Omega_\bfr$. Then the corresponding Sasaki cone will always have a
(possibly irregular) CSC-ray (up to isotopy). 
\end{theorem}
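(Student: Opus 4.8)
The plan is to assemble Theorem \ref{CSCSasakiExistence} entirely from the machinery developed in the preceding subsection, so the proof should be quite short. The essential observation is that the theorem is a restatement of the analysis immediately following Proposition \ref{CRtwistCSC}, organized around the polynomial $h(b)$ of \eqref{polh}. First I would invoke Proposition \ref{CRtwistCSC}: since $\Omega_\bfr$ is rational and $\cals$ is a Boothby-Wang constructed Sasaki manifold obtained from an appropriate rescale of $\Omega_\bfr$, every choice of $b$ with $|b|>1$ produces a $(|\gz+b|,5)$-extremal K\"ahler metric whose associated Sasaki structure $\xi=f\chi$ (with $f=|\gz+b|$) lies in the Sasaki cone, and that Sasaki structure is CSC precisely when $A_1 b - A_2 =0$.

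Next I would split into two cases according to whether $\Omega_\bfr$ itself admits an admissible CSC representative. If $f(r_1,r_2)=0$, then by Proposition \ref{extremalexistence} the class $\Omega_\bfr$ already carries an admissible CSC K\"ahler metric; the corresponding transverse structure is the quasi-regular ray coming directly from the Boothby-Wang construction, and this ray is a CSC ray in the Sasaki cone, finishing this case. If $f(r_1,r_2)\neq 0$, I would appeal to the polynomial analysis already recorded: $h(b)$ is a degree-$5$ polynomial in $b$ whose leading coefficient $\tfrac{2f(r_1,r_2)}{9m_0m_\infty n_1 n_2}$ is nonzero, so $h(b)\to\pm\infty$ as $b\to\pm\infty$ (the sign depending on the parity and the sign of the leading coefficient). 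Combined with the boundary behavior $\lim_{b\to\pm1^{\mp}} h(b)>0$ noted in the text, the intermediate value theorem guarantees a real root $b_0$ with $|b_0|>1$. For that $b_0$ we have $A_1 b_0 - A_2 = 0$, so the extremal Sasaki structure determined by $f=|\gz+b_0|$ is CSC, and it spans a ray in the Sasaki cone; when $b_0$ is irrational this ray is irregular, which is exactly the parenthetical in the statement.

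The remaining point is the phrase ``up to isotopy.'' Here I would note that the Reeb field $\xi=f\chi$ constructed from a Killing potential $f$ lies in the same Sasaki cone $\gt^+$ as the original Boothby-Wang Reeb field $\chi$, and that deforming within a fixed Sasaki cone preserves the isotopy class of the underlying CR/contact structure; the CSC condition is then realized on the ray $\bbr^+\xi$ up to the standard transverse homothety, which accounts for the ``(up to isotopy)'' qualifier. I expect the only genuine subtlety to be verifying the sign of the leading coefficient of $h(b)$ against the boundary values so that a sign change is forced on the correct half-line $(1,\infty)$ or $(-\infty,-1)$; but this is already asserted in the discussion preceding the theorem, where it is observed that a sign change must occur on $(-\infty,-1)\cup(1,+\infty)$ whenever $f(r_1,r_2)\neq 0$. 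Thus the main content has been front-loaded into the setup, and the proof of Theorem \ref{CSCSasakiExistence} reduces to citing Proposition \ref{CRtwistCSC}, splitting on the vanishing of $f(r_1,r_2)$, and applying the intermediate value theorem to $h(b)$.
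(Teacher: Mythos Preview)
Your proposal is correct and follows essentially the same approach as the paper. The paper's ``proof'' is entirely the discussion immediately preceding the theorem statement: it records that $h(b)$ is a degree-$5$ polynomial with leading coefficient $\tfrac{2f(r_1,r_2)}{9m_0m_\infty n_1n_2}$ and that $\lim_{b\to\pm1^\mp}h(b)>0$, then concludes by the intermediate value theorem exactly as you do, splitting on whether $f(r_1,r_2)=0$; the theorem is then stated without a separate proof block.
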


\begin{remark}\label{cscremark}
It is not too difficult (using \eqref{scalarcurvature} and \eqref{positivityForbifold}) to produce some admissible K\"ahler metric of positive scalar curvature in $\Omega_\bfr$. Using the corresponding K\"ahler form of this metric for the Boothby-Wang construction, we can use Lemma 5.2 in \cite{BHL17}) to conclude that the CSC-ray alluded to in the theorem above has positive transverse scalar curvature. This together with Theorem \ref{CSCSasakiExistence} proves the main theorem in the introduction.
\end{remark}

\begin{remark}
The observations in this section - in particular, Proposition \ref{CRtwistCSC} and Theorem \ref{CSCSasakiExistence} - can be generalized to no-blown-down admissible manifolds in general \cite{BHLT21}.
Indeed, in \cite{BHLT21} we use a more general version of Proposition \ref{CRtwistCSC} to answer the open question in Problem 6.1.2 of \cite{BHLT19} by providing a counter example where the Sasaki cone contains some extremal Sasaki structures, but no CSC Sasaki metric at all.
 \end{remark}

\begin{example}
Suppose $m_1=m_2=1$ and suppose $n_1> 4$. Now let $r_1=\frac{5 n_1^2-4}{n_1 \left(n_1^2+4\right)}$, $r_2=2/n_1$, and $n_2$ be equal to any positive integer.
Note that since $n_1> 4$ we have ensured that $r_i \in (0,1)$ for $i=1,2$ and $r_1 \neq r_2$.
Then $\Omega_\bfr$ is a rational (non-diagonally) admissible K\"ahler class on the smooth KS manifold $(S_\bfn)$. Note that since $\Omega_\bfr$ is non-diagonally admissible, we know from Theorem \ref{whenS3join} that $S_\bfn$  (with any appropriate rescale of $\Omega_\bfr$) is not the quotient of a regular ray in the $\bfw$-cone of the type of $S^3_\bfw$-join
described in Theorem \ref{whenS3join}.
Further, it can be shown (as we will do near the end of this example) that $S_\bfn$  (with any appropriate rescale of $\Omega_\bfr$) is also not the quotient of the regular ray in the $\gt^+_{sphr}$ cone of a Yamazaki fiber join over $\bbc\bbp^1\times\bbc\bbp^1$. The Boothby-Wang constructed  Sasaki manifold over $S_\bfn$, given by an appropriate rescale of $\Omega_\bfr$ is thus in a certain sense a truly new example.

By Theorem 8 and Proposition 11 of \cite{ACGT08} we know that the K\"ahler class $\Omega_\bfr$, while having an extremal admissible K\"ahler metric representative, does not admit a CSC K\"ahler metric. [This fact can also easily be checked by seeing that $f(r_1,r_2)$ from \eqref{CSCcond} cannot be zero in this case.] 
We now calculate that with the given choices of $(\bfm, \bfn, \bfr)$, we have that $h(b)$ from \eqref{polh} is equal to
$$
h(b)=\frac{4 (n_1-2 b)p(b) }{9 n_1^5 \left(n_1^2+4\right)^2 n_2}
$$
where $p(b)$ is given by
$$
\begin{array}{ccl}
p(b) &=&3 n_1^8 n_2-4 n_1^7-13 n_1^6 n_2+268 n_1^5+1252 n_1^4 n_2-544 n_1^3-1456 n_1^2 n_2+192 n_1+192 n_2\\
\\
&-& 4 n_1 \left(9 n_1^6 n_2+20 n_1^5+482 n_1^4 n_2+64 n_1^3-464 n_1^2 n_2-64 n_1+224 n_2\right)b\\
\\
&+& 16 \left(n_1^7+79 n_1^6 n_2-17 n_1^5-35 n_1^4 n_2+56 n_1^3+32 n_1^2 n_2-16 n_1-16 n_2\right)b^2\\
\\
&-&4 n_1 \left(63 n_1^6 n_2-20 n_1^5+106 n_1^4 n_2-64 n_1^3-16 n_1^2 n_2+64 n_1-32 n_2\right)b^3\\
\\
&+&(21 n_1^8 n_2-12 n_1^7+21 n_1^6 n_2+4 n_1^5+268 n_1^4 n_2-352 n_1^3-208 n_1^2 n_2+64 n_1+64 n_2)b^4.
\end{array}
$$
Clearly, $b=n_1/2$ is a rational root of $h(b)$. Note that since $n_1>4$ and $n_2>0$, we also have that 
\begin{itemize}
\item The $b^4$ coefficient of $p(b)$ is positive
\item $p(\pm 1)>0$ 
\item $p'(-1)<0$ and $p'(1)>0$
\item $f(b):= p''(b)$ is a concave up 2nd order polynomial with $f(\pm 1)>0$, $f'(-1)<0$, and $f'(1)>0$. Thus any roots of $f(b)$ would be
inside the interval $(-1,1)$.
\end{itemize}
Putting this together we have that $p(b)$ is positive and decreasing at $b=-1$, positive and increasing at $b=1$, and concave up for $|b|\geq 1$.
Hence $p(b)$ has no roots in $(-\infty, -1)\cup (1,+\infty)$ and hence $b=n_1/2$ is the ONLY root of $h(b)$.

All in all we conclude that while the original regular ray coming from the Boothby-Wang constructed Sasaki manifold over $S_\bfn$, given by an appropriate rescale of $\Omega_\bfr$ is not CSC, the CSC ray alluded to in Theorem \ref{CSCSasakiExistence} is quasi-regular. It would be interesting to explore what the transverse K\"ahler orbifold is for this ray. 
Conjecturely, it may be a KS orbifold.

Finally, let us finish this example by confirming that $S_\bfn$ (with any appropriate rescale of $\Omega_\bfr$) is also not the quotient of the regular ray in the $\gt^+_{sphr}$ cone of a Yamazaki fiber join over $\bbc\bbp^1\times\bbc\bbp^1$: 

For $S_\bfn$ (with an appropriate rescale of $\Omega_\bfr$) to be such a quotient, it follows from Section \ref{yam} and \eqref{yamequation} that
there should exist $k_1^1, k_1^2, k_2^1, k_2^2 \in \bbz^+$ such that
$$k_1^1-k_2^1 =  n_1,\quad k_1^2-k_2^2=n_2,\quad \frac{k_1^1-k_2^1}{k_1^1+k_2^1}=\frac{5 n_1^2-4}{n_1 \left(n_1^2+4\right)},\quad \frac{k_1^2-k_2^2}{k_1^2+k_2^2} = 2/n_1.$$
This would place further restrictions on $n_1$ and $n_2$ and, in particular, we would need $n_1$ to satisfy that
$\frac{n_1^2 \left(n_1^2+4\right)}{5 n_1^2-4}$ is a positive  integer (that positive integer is $k_1^1+k_2^1$). By the following elementary lemma this is not possible and
so $S_\bfn$ (with any appropriate rescale of $\Omega_\bfr$) is not the quotient of the regular ray in the $\gt^+_{sphr}$ cone of a Yamazaki fiber join.

\begin{lemma}
For all integers $x \geq 3$, $\frac{x^2 \left(x^2+4\right)}{5 x^2-4}$ is NOT an integer.
\end{lemma}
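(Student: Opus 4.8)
The plan is to reduce this divisibility question to a bounded finite check. First I would substitute $u=x^2$, so that the quantity becomes $\frac{u(u+4)}{5u-4}$, and record two facts: for $x\ge 3$ we have $u\ge 9$, hence $5u-4\ge 41$; and $u$ is not merely a positive integer but a \emph{perfect square}. The second fact will be the crux.

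The key algebraic step is to clear the interaction between numerator and denominator by multiplying by $25$ and performing polynomial division. One computes $25u(u+4)=(5u-4)(5u+24)+96$, so that $25\cdot\frac{u(u+4)}{5u-4}=5u+24+\frac{96}{5u-4}$. Now suppose, for contradiction, that $N:=\frac{u(u+4)}{5u-4}$ is an integer. Then $25N$ is an integer and $5u+24$ is an integer, so $\frac{96}{5u-4}=25N-(5u+24)$ is an integer as well. Since $5u-4>0$, this forces $5u-4$ to be a positive divisor of $96$.

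Finally I would run the bounded check. The positive divisors of $96$ are $1,2,3,4,6,8,12,16,24,32,48,96$, and among these only $48$ and $96$ are $\ge 41$. The case $5u-4=48$ gives $u=52/5\notin\bbz$, already impossible. The case $5u-4=96$ gives $u=20$, which \emph{is} a positive integer but is not a perfect square, so $u=x^2=20$ has no integer solution. In either case we reach a contradiction, proving that $\frac{x^2(x^2+4)}{5x^2-4}$ is not an integer for $x\ge 3$.

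The main obstacle, and the reason the hypothesis cannot be weakened, is precisely the case $5u-4=96$: there the expression $\frac{u(u+4)}{5u-4}=\frac{20\cdot 24}{96}=5$ genuinely is an integer, so the argument survives only because $20$ fails to be a perfect square. This is exactly where the integrality of $x$ itself (as opposed to integrality of $5x^2-4$ alone) is indispensable; it also explains why the statement would fail at $x=2$, where $5x^2-4=16$ is a small divisor of $96$ (below the threshold $41$) and the value equals $2$.
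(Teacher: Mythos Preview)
Your proof is correct and considerably cleaner than the paper's. The paper argues by prime factorization: it shows that any prime divisor of $5x^2-4$ must be $2$ or $3$, rules out $3$ by a residue check, concludes $5x^2-4=2^i$, and then performs a $2$-adic valuation computation (writing $x=2(2l+1)$ and showing $x^2(x^2+4)$ has exactly $2^5$ as its $2$-part while $i>6$) to reach a contradiction. Your route---multiply by $25$, perform the division $25u(u+4)=(5u-4)(5u+24)+96$, and thereby force $5u-4\mid 96$---replaces all of that with a single polynomial identity and a bounded divisor check, after which the perfect-square constraint on $u=x^2$ does the work. Both proofs ultimately exploit that $5x^2-4$ can only be ``small'' relative to $96$, but yours makes this explicit and finite immediately; the paper's argument is more structural but longer. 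Your closing remark that the $u=20$ case is where the integrality of $x$ (not just of $x^2$) is genuinely needed is a nice observation absent from the paper.
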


\begin{proof}
Assume for contradiction that there exists an integer $x\geq 3$ such that $\frac{x^2 \left(x^2+4\right)}{5 x^2-4}$ is an integer.
Let $d>1$ be a prime divisor of $5 x^2-4$. Then, due to our assumption, $d \mid x^2$ or $d\mid(x^2+4)$. If $d\mid x^2$, then, since also $d\mid (5 x^2-4)$, we have that
$d\mid4$ and so $d=2$. If $d\mid(x^2+4)$, then, since again $d\mid (5 x^2-4)$, we have that
$d\mid (5(x^2+4) - (5 x^2-4))$, i.e., $d\mid 24$, and so $d=2$ or $d=3$. 

By considering the three possible cases $x=3k$, $x=3k+1$, and $x=3k+2$ with $k\in \bbz$, we realize that $3 \nmid (5 x^2-4)$.
Therefore, $d=2$ and  $5x^2-4$ is even. This implies that $x^2$ is even and hence $x$ is even. In particular, $x\neq 3$ and moving forward we may assume
$x\geq 4$.

Since $5x^2-4$ is even, we have that $(5 x^2-4)=2^i$ for some non-negative integer $i$. We also know that $5x^2-4 \geq 76$ (since $x\geq 4$), so $i>6$.

Now, writing $x=2k$ for some $k\in \bbz^+$ we see that $(5 x^2-4)=2^i$ implies that
$20k^2-4=2^i$ and hence $5k^2=2^{i-2}+1$. Thus $5k^2$ is odd, which gives us that $k^2$ is odd and hence $k$ is odd.
Let us write $k=2l+1$ for some $l\in \bbz^+$. So $x=2(2l+1)$. Then 
$$
\begin{array}{ccl}
x^2 \left(x^2+4\right)&=& 2^2(4l^2+4l+1)(2^2(4l^2+4l+1)+4)\\
\\
&=& 2^2(4l^2+4l+1)(2^2(4l^2+4l+2))\\
\\
&=& 2^2(4l^2+4l+1)(2^3(2l^2+2l+1))\\
\\
&=& 2^5(2(2l^2+2l)+1)(2(l^2+l)+1).
\end{array}
$$
Since $(2(2l^2+2l)+1)(2(l^2+l)+1)$ is odd, we realize that $\frac{x^2 \left(x^2+4\right)}{5 x^2-4}= \frac{(2(2l^2+2l)+1)(2(l^2+l)+1)}{2^{i-5}}$. Since $i>6$, this cannot possible be an integer and hence we have arrived at a contraction. This completes the proof of the lemma.
\end{proof}

\end{example}

As an immediate consequence of Theorem \ref{CSCSasakiExistence} we get the following special case of the existence result for toric Sasaki-Einstein metrics due to Futaki, Ono, and Wang \cite{FOW06}.

\begin{corollary}\label{FOWspecial}
Suppose a KS orbifold of the form $(S_\bfn,\Delta_\bfm)$ is log Fano and consider a Boothby-Wang constructed  Sasaki manifold over $(S_\bfn,\Delta_\bfm)$ given by some K\"ahler form representing $c_1^{orb}(S_\bfn,\grD_\bfm)/\cali_{\bfn,\bfm}$. Then the corresponding Sasaki cone will always have a
(possibly irregular) Sasaki-Einstein structure (up to isotopy).
\end{corollary}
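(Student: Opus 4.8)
The plan is to deduce the statement directly from Theorem \ref{CSCSasakiExistence} and then to argue that, in the monotone log Fano situation at hand, a CSC Sasaki structure is automatically Sasaki--Einstein up to a transverse homothety. First I would verify the hypotheses of Theorem \ref{CSCSasakiExistence}. Since $(S_\bfn,\grD_\bfm)$ is log Fano and we polarize by $c_1^{orb}(S_\bfn,\grD_\bfm)/\cali_{\bfn,\bfm}$, the computation of Section \ref{KEsection} shows that this class is $\lambda\,\Omega_\bfr/2\pi$ for the parameters $r_1,r_2$ given by \eqref{rscondition}; as these are ratios of the integers $n_1,n_2,m_0,m_\infty$, the class $\Omega_\bfr$ is a \emph{rational} admissible K\"ahler class (it satisfies \eqref{positivityForbifold} by the Fano hypothesis). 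Hence Theorem \ref{CSCSasakiExistence} applies to the Boothby--Wang manifold $\cals$ built from an appropriate rescale of $\Omega_\bfr$ and produces a (possibly irregular) CSC-ray in its Sasaki cone, which by Remark \ref{cscremark} may be taken to have positive transverse scalar curvature.

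The crucial step is to promote this CSC structure to an Einstein one, and for this I would use that the cone $Y_{\bfn,\bfm}=C(\cals)$ is Gorenstein. For the regular Reeb field the basic first Chern class is $c_1^B=\pi^*c_1^{orb}(S_\bfn,\grD_\bfm)=\cali_{\bfn,\bfm}\,[d\eta]_B$, so $c_1^B$ is a positive multiple of the transverse K\"ahler class and its image $c_1(\cald)\in H^2(M^7_{\bfn,\bfm},\bbr)$ vanishes, because $[d\eta]_B$ lies in the kernel of the natural map $H^2_B\to H^2(M^7_{\bfn,\bfm},\bbr)$. Since $c_1(\cald)$ is an invariant of the underlying contact (CR) structure, it is independent of the choice of Reeb field in the Sasaki cone, so this vanishing persists for the CSC Reeb field produced above. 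By the transverse Gysin sequence the kernel of $H^2_B\to H^2(M^7_{\bfn,\bfm},\bbr)$ is spanned by $[d\eta]_B$; therefore $c_1^B=a\,[d\eta]_B$ for some constant $a$ also for the CSC structure, and the positivity of its transverse scalar curvature forces $a>0$. Thus the transverse geometry of the CSC ray is again monotone Fano.

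Finally I would combine the CSC and monotone conditions. Writing $\rho^T=\kappa\,\omega^T+\tfrac12\,dd^c h$ for a basic function $h$ and a positive constant $\kappa$ (possible since $[\rho^T]=\kappa[\omega^T]$ with $\kappa>0$ by the previous step), the constancy of the transverse scalar curvature $s^T=2\,\mathrm{tr}_{\omega^T}\rho^T$ shows that $\mathrm{tr}_{\omega^T}(dd^c h)$ is constant on the compact transverse orbifold, whence $h$ is constant and $\rho^T=\kappa\,\omega^T$; i.e.\ the transverse metric is K\"ahler--Einstein. A single $D$-homothetic transformation, which moves only along the given ray, then rescales $\omega^T$ so that the transverse Einstein constant becomes $2(n+1)=8$ (here $n=3$), yielding a Sasaki--Einstein metric in the Sasaki cone up to isotopy, as claimed. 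The main obstacle is conceptual rather than computational: it is the transport of the monotone relation $c_1^B\propto[d\eta]_B$ from the regular Reeb field to the (generally irregular) CSC Reeb field via the Reeb-independence of $c_1(\cald)$, since for a single fixed Reeb field CSC is strictly weaker than Einstein, and it is exactly the change of Reeb field supplied by Theorem \ref{CSCSasakiExistence} that annihilates the transverse Futaki invariant obstructing the regular structure.
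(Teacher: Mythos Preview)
Your argument is correct and follows essentially the same route as the paper's proof: apply Theorem \ref{CSCSasakiExistence} to get a CSC ray, use $c_1(\cald)=0$ to upgrade CSC to $\eta$-Einstein, and use positivity of the transverse scalar curvature to get Sasaki--Einstein after a $D$-homothety. The only difference is one of packaging: the paper invokes the standard implication ``$c_1(\cald)=0$ and CSC $\Rightarrow$ $\eta$-Einstein'' in one line and cites Proposition~5.3 of \cite{BHL17} for the positivity of the (average) transverse scalar curvature across the entire cone, whereas you unpack the first implication by hand (via the basic Gysin sequence, the transverse $\partial\bar\partial$-lemma, and the harmonic-function argument) and obtain positivity from Remark~\ref{cscremark} instead. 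Both routes are valid; yours is more self-contained, the paper's is terser.
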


\begin{proof}
First, we note that $c_1(\cald)=0$ and so the CSC Sasaki metrics from Theorem \ref{CSCSasakiExistence} above are now $\eta$-Einstein. Second, we see from Proposition 5.3 in \cite{BHL17}) that
since the basic first Chern class of the initial Sasaki metric is positive (as a pullback of the positive class $c_1^{orb}(S_\bfn,\grD_\bfm)/\cali_{\bfn,\bfm}$), the average transverse scalar curvature of any Sasaki structure in the Sasaki cone must be positive. In particular, the transverse (constant) scalar curvature of any $\eta$-Einstein structure in the cone must be positive.
This means that any $\eta$-Einstein ray admits a Sasaki-Einstein structure.
\end{proof}

\begin{remark}\label{MNapproach}
Examples similar in spirit to Corollary \ref{FOWspecial}, but also including non-toric cases have been given by Mabuchi and Nakagawa \cite{MaNa13}. See also \cite{ApJuLa21} and references therein.
\end{remark}

\section*{Appendix}

\bigskip

\begin{tabular}{|c|c|c|c|c|c|c|c|c|c|c|c|} 
 \hline
$p_1$&$q_1$&$p_2$& $q_2$ &$n_1$&$n_2$& $m_0$ & $m_\infty$ & $m$ &$v_0$&$v_\infty$& $\cali_{\bfn,\bfm}$\\ [0.5ex] 
  \hline
 1&2&-1&15&5124072&-740316&6438801&4797538&126251&51&38&89\\ 					
 \hline
 1&2&-1&14&775675&-120061&972325&726685&10235&95&71&83\\ 					
 \hline
 1&2&-1&13&48&-8&60&45&15&4&3&7\\ 					
  \hline
 1&2&-1&12&2080161&-375516&2591676&1951756&31996&81&61&71\\ 						
  \hline
 1&2&-1&11&1462832&-288008&1815479&1373876	&49067&37&28&65\\ 					
  \hline
 1&2&-1&10&110483&-23919&136479&103887&2037	&67&51&59\\ 					
 \hline
 1&2&-1&9&129720&-31188&159330&122153&5311&30&23&53\\ 						
 \hline
 1&2&-1&8&80401&-21730&98050&75850&1850&53&41&47\\ 						
  \hline
 1&2&-1&7&25944&-8004&31349&24534&1363&23&18&41\\ 
  \hline
 1&2&-1&6&124527&-44733&148629&118141&3811&39&31&35\\ 					
 \hline
 1&2&-1&5&59072&-25376&69296&56303&4331&16&13&29\\ 
 \hline
 1&2&-1&4&525&-280&600&504&24 &25&21&23\\
 \hline
1&2&-1&3&1440&-1008&1575&1400&175&9&8&17\\
 \hline
1&2&-1&2&1&-1&1&1&1&1&1&1\\
 \hline
1&2&1&2&51&51&85&45&5&17&9&13\\
\hline
1&2&1&3&10416&7224&15996&9331&1333&12&7&19\\
\hline
1&2&1&4&31217&16492&46004&28196&1484&31&19&25\\
\hline
1&2&1&5 &8208&3496&11799&7452&621&19&12&31\\ 
\hline
1&2&1&6 &30015	&10701&42435&27347&943&45&29&37\\ 					
\hline
1&2&1&7 &54808&16796&76570&50065&2945&26&17&43\\ 					
\hline
1&2&1&8 &51389&13806&71154&47034&1206&59&39&49\\ 					
\hline
1&2&1&9 &552&132	&759&506&253&3&2&5\\ 					
\hline
1&2&1&10 &1112447&239659&1521101&1021013&20837&73&49&61\\ 				
\hline
1&2&1&11 &36000&7056&49000&33075&1225&40&27&67\\ 				
\hline
1&2&1&12 &456837&82128&619440&420080&7120&87&59&73\\ 					
\hline
1&2&1&13 &3134336&520384&4236251&2884256&90133&47&32&79\\ 						
\hline
1&2&1&14 &466923&72013&629331&429939&6231&101&69&85\\ 					
\hline
1&2&1&15 &5522472&795204&7425486&5087833&137509&54&37&91\\ [1ex] 					
 \hline
\end{tabular}
\normalsize
\medskip
\newline TABLE: This gives a sample of K\"ahler-Einstein orbifold solutions. See Example \ref{KEex}.

\newpage
\newcommand{\etalchar}[1]{$^{#1}$}
\def\cprime{$'$} \def\cprime{$'$} \def\cprime{$'$} \def\cprime{$'$}
  \def\cprime{$'$} \def\cprime{$'$} \def\cprime{$'$} \def\cprime{$'$}
  \def\cdprime{$''$} \def\cprime{$'$} \def\cprime{$'$} \def\cprime{$'$}
  \def\cprime{$'$}
\providecommand{\bysame}{\leavevmode\hbox to3em{\hrulefill}\thinspace}
\providecommand{\MR}{\relax\ifhmode\unskip\space\fi MR }
\providecommand{\MRhref}[2]{%
  \href{http://www.ams.org/mathscinet-getitem?mr=#1}{#2}
}
\providecommand{\href}[2]{#2}

\end{document}